\title{Coupled GUE-minor Processes}
 \author{ 
  Mark Adler\thanks{  
  Department of Mathematics, Brandeis University,
 Waltham, Mass 02454, USA. E-mail: adler@brandeis.edu.
 The support of a Simons Foundation Grant 
 \# 278931 is
 gratefully acknowledged.}~
  ~ and~~~Pierre
 van Moerbeke\thanks{ Department of Mathematics,
 Universit\'e de Louvain, 1348 Louvain-la-Neuve, Belgium and Brandeis University, Waltham, MA 02454, USA. E-mail: pierre.vanmoerbeke@uclouvain.be and
 vanmoerbeke@brandeis.edu. The support of a Simons Foundation Grant 
 \# 280945 is
 gratefully acknowledged.
 \newline 2000{\em Mathematics Subject Classification}. Primary:60G60, 60G65, 35Q53; secondary: 60G10, 35Q58. {\em Key words and Phrases}: Random matrices, GUE-ensemble, spectrum of minors, coupled random matrices, domino tilings of double Aztec diamonds, tacnode processes, GUE-minor kernel.}}
\date{}
\newcommand{\MAT}[1]{\left(\begin{array}{*#1c}}
\newcommand{\mat}{\end{array}\right)}
\newcommand{\qed}{\leavevmode\unskip\nobreak\penalty200\hskip2pt\null
\nobreak\hfill\rule{1.1ex}{1.1ex}
\medbreak }
\newcommand{\I}{{\rm i}}
\newcommand{\BH}{{\mathbb H}}
\newcommand{\BP}{{\mathbb P}}
\newcommand{\BZ}{{\mathbb Z}}
\newcommand{\al}{\alpha}
\newcommand{\Id}{\mathbbm{1}}
\newenvironment{proof}{\medskip\noindent{\it Proof:\/} }{\qed}
\newcommand{\om}{\omega}
\newcommand{\Om}{\Omega}
\newcommand{\la}{\langle}
\newcommand{\ra}{\rangle}
\newcommand{\dt}{\delta}
\newcommand{\Dt}{\Delta}
 \newcommand{\vr}{\varepsilon}
\newcommand{\BR}{{\mathbb R}}
\newcommand{\lb}{\lambda}
\newcommand{\BK}{{\mathbb K}}
\def\be#1\ee{\begin{equation}#1\end{equation}}
\def\bea#1\eea{\begin{eqnarray}#1\end{eqnarray}}
\def\bean#1\eean{\begin{eqnarray*}#1\end{eqnarray*}}
 \newtheorem{definition}{Definition}[section]
 \newtheorem{theorem}[definition]{Theorem}
 \newtheorem{lemma}[definition]{Lemma}
 \newtheorem{corollary}[definition]{Corollary}
 \newtheorem{proposition}[definition]{Proposition}
\newdimen\squaresize
\newdimen\thickness
\newdimen\Thickness
\newdimen\ll! \newdimen \uu! \newdimen\dd! \newdimen \rr! \newdimen
\def\sq!#1#2#3#4#5{%
\ll!=#1 \uu!=#2 \dd!=#3 \rr!=#4
\setbox0=\hbox{%
 \temp!=\squaresize\advance\temp! by .5\uu!
 \rlap{\kern -.5\ll!
 \vbox{\hrule height \temp! width#1 depth .5\dd!}}%
%
 \temp!=\squaresize\advance\temp! by -.5\uu!
 \rlap{\raise\temp!
 \vbox{\hrule height #2 width \squaresize}}%
%
 \rlap{\raise -.5\dd!
 \vbox{\hrule height #3 width \squaresize}}%
%
 \temp!=\squaresize\advance\temp! by .5\uu!
 \rlap{\kern \squaresize \kern-.5\rr!
 \vbox{\hrule height \temp! width#4 depth .5\dd!}}%
%
 \rlap{\kern .5\squaresize\raise .5\squaresize
 \vbox to 0pt{\vss\hbox to 0pt{\hss $#5$\hss}\vss}}%
}
 \ht0=0pt \dp0=0pt \box0
}
\def\vsq!#1#2#3#4#5\endvsq!{\vbox to \squaresize{\hrule
width\squaresize height 0pt%
\vss\sq!{#1}{#2}{#3}{#4}{#5}}}
\newdimen \LL! \newdimen \UU! \newdimen \DD! \newdimen \RR!
\def\vvsq!{\futurelet\next\vvvsq!}
\def\vvvsq!{\relax
  \ifx     \next l\LL!=\Thickness \let\continue=\skipnexttoken!
  \else\ifx\next u\UU!=\Thickness \let\continue=\skipnexttoken!
  \else\ifx\next d\DD!=\Thickness \let\continue=\skipnexttoken!
  \else\ifx\next r\RR!=\Thickness \let\continue=\skipnexttoken!
  \else\def\continue{\vsq!\LL!\UU!\DD!\RR!}%
  \fi\fi\fi\fi
  \continue}
\def\skipnexttoken!#1{\vvsq!}
\def\place#1#2#3{\vbox to 0pt{\vss
\rlap{\kern#1\squaresize
  \raise#2\squaresize\hbox{$#3$}}
\vss}}
\newcommand{\Rmnum}[1]{\expandafter\@slowromancap\romannumeral #1@}
\begin{document}

\sloppy
 \maketitle
 

\tableofcontents

 \begin{abstract}
 This paper deals with two GUE-matrices, coupled together through some inequalities between the spectra of the first few (small) principal minors. The main results of the paper is to  show that the spectra of the principal minors of these coupled matrices behave statistically as the domino tilings of finitely overlapping Aztec diamonds when their sizes get very large, with horizontal and vertical dominos being equally likely. This extends naturally a result of Johansson and Nordenstam \cite{JoNo}, stating that the spectra of the principal minors of a GUE-matrix behave statistically as domino tilings of an Aztec diamond, near the middle of its edge. Given the spectra of the two coupled matrices, the joint spectra of the underlying principal minors of the two GUE-matrices are uniformly distributed in a certain ``double cone". In particular, this leads to two GUE-matrices sharing the same real line, with one spectrum being completely  to the left of the other spectrum; this gives a new and simple extension of GUE.  Also notice that all statements concerning these coupled random matrices have a domino tiling counterpart.  
  \end{abstract}

\section{Introduction}

Given a random matrix, the problem of knowing, not only the statistics of the spectrum, but also of the statistics of the joint spectra of its principal minors, has been studied in many different contexts. In particular Baryshnikov \cite{Bary} has shown that, given the spectrum of a GUE-matrix, the interlacing spectra of its principal minors are uniformly distributed in a cone. 

Along a different line of thought, it is also known that domino tilings of Aztec diamonds, when the size gets large, have a  stochastic behavior within a arctic circle and a brick-like pattern outside, with Airy process fluctuations near generic points of the circle; see \cite{EKLP,EKLP2,JPS,FS03,Johansson3,Jo02b,Jo03b}. Since the work of Johansson and Nordenstam \cite{JoNo}, it is known that random domino tilings of large size Aztec diamonds behave, near the point of tangency of the inscribed arctic circle with the edge of the Aztec diamond, like the spectra of the principal minors of a GUE-matrix. This was also done in a more prosa\"ic context in the work of Okounkov and Reshetikhin \cite{OR}. 

In this paper, we introduce two GUE-matrices, which are coupled together through some inequalities between the spectra of the first few (small) principal minors; these constraints define a coupling between two GUE-matrices. One of the main results of the paper is to  show that the spectra of the principal minors of these coupled matrices behave statistically as the domino tilings of finitely overlapping Aztec diamonds, near the overlap, when their sizes get very large. The latter model and an associated determinantal point process have been investigated in \cite{ACJvM}, and will be called the {\em edge-tacnode process}. It turns out this gives a very natural extension of Baryshnikov's result, summarized as follows: given the spectra of the two matrices, the joint spectra of the underlying principal minors of the two GUE-matrices are uniformly distributed in a certain ``double cone", to be specified later.

Opposed to this, when the amount of overlap tends to infinity in the same way as the size of the diamond, and such that the arctic circles merely touch, then one is led to the {\em tacnode process} statistics, studied in many different situations \cite{AFvM12,AJvM,DKZ10,Joh10,FV}; the latter will not be considered in this work.

Consider a matrix $  A\in \mbox{GUE}(n)$ and the random spectra  ${\bf x}^{(k)}=(x^{(k)}_1\geq\dots\geq x^{(k)}_k) \in \BR^k$ of its principal minors $A^{(k)}$. The consecutive sets of eigenvalues are known to interlace; this is denoted by ${\bf x}={\bf x}^{(n)}\succ {\bf x}^{(n-1)}\succ \ldots \succ {\bf x}^{(1)}$. 
Consider uniform measure 
 $$
 d\mu_{{\bf x}^{(n)}}=\prod_1^{n-1} d{\bf x}^{(k)}\Id_{{\bf x}^{(k+1)}\succ {\bf x}^{(k)} }$$ on the cone $${\mathcal C}_{\bf x}=\{({\bf x}^{(n-1)}, \ldots , {\bf x}^{(1)}) \in \BR^{n(n-1)/2}\mbox{ such that }{\bf x} \succ {\bf x}^{(n-1)}\succ \ldots \succ {\bf x}^{(1)}\}.
 $$
%
 As mentioned, according to \cite{Bary}, upon fixing the spectrum ${\bf x}$ of the GUE-matrix $A^{(n)}$, the spectra
  ${\bf x}^{(k)}$ of the minors $A^{(k)}$ for $1\leq k\leq n-1$ are uniformly distributed, with the volume expressed in terms of the Vandermonde,
%
\be\begin{aligned}
\BP^{\mbox{\tiny GUE}}\left(\bigcap_{k=1}^{n-1}\{{\bf x}^{(k)}\in d{\bf x}^{(k)}\}~\Bigr|~
{\bf x}^{(n)}={\bf x}
 \right)=
  \frac{d\mu_{\bf x} }{\mbox{Vol}( {\mathcal C}_{\bf x}  ) } 
, 
 \mbox{with }\mbox{Vol}( {\mathcal C}_{\bf x} )=\frac{ \Dt_n({\bf x})}{\prod_1^{n-1}k!}, 
   \end{aligned}\label{Bary1}\ee
and thus we have \be\begin{aligned}
\BP^{\mbox{\tiny GUE}}\Bigl(\bigcap_{k=1}^n\{{\bf x}^{(k)}\in d{\bf x}^{(k)}\}
 \Bigr)=
 \rho_n^{ \mbox{\tiny GUE}}({\bf x}^{(n)})d{\bf x}^{\!(n)}
  \frac{d\mu_{\bf x}}{\mbox{Vol}( {\mathcal C}_{\bf x}  ) } 
,
  \end{aligned}\label{Bary1'}\ee
 in terms of the distribution of the spectrum of a GUE$(n)$-matrix:
 \be
\rho_n^{ \mbox{\tiny GUE}}({\bf x}):=c_n
\Dt_n^2({\bf x}) \prod_1^n e^{-x_i^2},~~~\mbox{with}~~
c_n=\frac {2^{n^2/2}}{(2\pi)^{n/2}\prod_1^{n-1} j!}.
\label{GUE}
\ee
Baryshnikov's uniform distribution result follows, in a very indirect way, from taking a continuous limit of a uniform  probability result on a discrete model involving semi-standard Young tableaux and a probability measure, inherited via the RSK-correspondence.

As mentioned before, based on the Johansson's work \cite{Jo02b,Johansson3}, Johansson and Nordenstam \cite{JoNo} show that the statistics of the eigenvalues ${\bf x}^{(k)}$ of the successive principal minors are given by a determinantal process induced by domino tilings of Aztec diamonds in the scale $n^{-1/2}$, when the size $n$ of the diamond gets very large.
Recall from \cite{JoNo} and \cite{OR} the GUE-minor kernel describing the determinantal process of the interlacing eigenvalues of the successive principal minors of a GUE-matrix $A^{(n)}$; i.e., \footnote{with  $\BH^{m}(z)$ defined for $m\geq 1$ as
$
\begin{aligned}
   \BH^{m}(z)&:=\Id _{z\geq 0} {z^{m-1}}/{(m-1)!}
 \end{aligned} 
 $. Here and below we shall define functions, which involve integration over small circles $\Gamma_0$ and an imaginary line $L:=0^{+}+i\BR\! \uparrow$; the line $L$ needs to be always to the right of the contour $\Gamma_0$}
\be
\begin{aligned}
 \BK^ {\mbox{\tiny minor}}  (n,x;n',x')   
 := &-\Id_{n>n'}2^{n-n'}\BH^{n-n'}(x-x')
\\
&+\frac{2}{(2\pi \I)^2}
\int_{\Gamma_0}dz\int_{L
 } \frac{dw}{w-z}
 \frac{e^{-z^2+2zx}}{e^{-w^2+2wx'}}\frac{w^{n'}}{z^n}
.\end{aligned}
\label{8.6}\ee

\noindent We now consider in some detail the following two models: 

\bigbreak

{\bf I.   Two coupled GUE-matrices:} 
Given a real number $\beta\geq 0$, consider two independent GUE-matrices $A$ and $B$,
$$
A\in \beta .\Id_{\infty}+\mbox{GUE}(\infty),~,~B \in -\beta .\Id_{\infty}+\mbox{GUE}(\infty)
.$$
Then, for a fixed $k\geq 1$, its principal minors $A^{(k)}$ and $B^{(k)}$ of size $k$ are independent $\pm \beta .\Id_{k}+$ GUE$(k)$ matrices, with spectra ${\bf x}^{(k)}=(x_1^{(k)}\leq\dots\leq x_k^{(k)})$ and ${\bf y}^{(k)}=(y_1^{(k)}\leq\dots\leq y_k^{(k)})$, satisfying
the well-known GUE-distribution $\rho_k^{ \mbox{\tiny GUE}}({\bf x}-\beta)$ and $\rho_k^{ \mbox{\tiny GUE}}({\bf y}+\beta)$.  \footnote{${\bf x}-\beta=(x_1-\beta,\ldots,x_k-\beta)$.} 

Given integers $n\geq \rho\geq 1$,  consider the spectra 
%
\be \ldots,{\bf y}^{(n)},\ldots,{\bf y}^{(\rho )},~{\bf y}^{(\rho-1)}\cup {\bf x}^{(1)} ,\ldots, {\bf y}^{(1)}\cup {\bf x}^{(\rho-1)}  ,~{\bf x}^{(\rho )},\ldots, {\bf x}^{(n)},\ldots\label{spectra1}\ee
of the successive minors $B^{( i )} $ and $A^{(i)}$, with the   combined spectra of 
 $B^{(i)}$ and $A^{(\rho-i )} $ for $1\leq i\leq \rho-1$, {\em satisfying $\rho$ spectral inequalities},
\be \begin{aligned}
  \max \mbox{\em spec}(B^{(i)})\leq \min \mbox{\em spec}(A^{(\rho-i+1)})
   \mbox{ ,  for $1\leq i\leq \rho$},
  \end{aligned}\label{constraint}\ee
equivalent to (as illustrated in Figs. 1 and 3)
$$   {  y}^{(i)}_i\leq x ^{(\rho-i+1)}_1 , \mbox{    for $1\leq i\leq \rho$}  . $$

  \newpage
   
     \vspace*{-5cm}
  
 \hspace*{  2.7cm}  {\includegraphics[width=140mm,height=260mm]{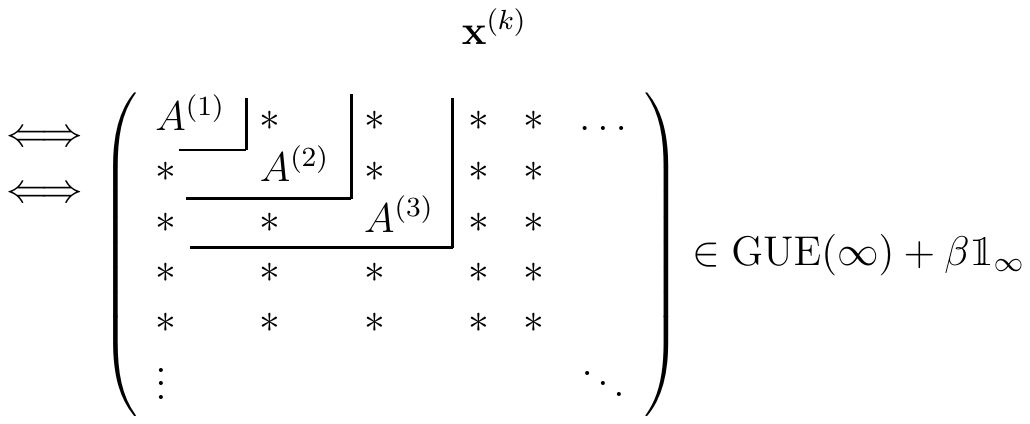}}

  \vspace*{-28cm}
  
  \hspace*{-4.5cm}    {\includegraphics[width=140mm,height=260mm]{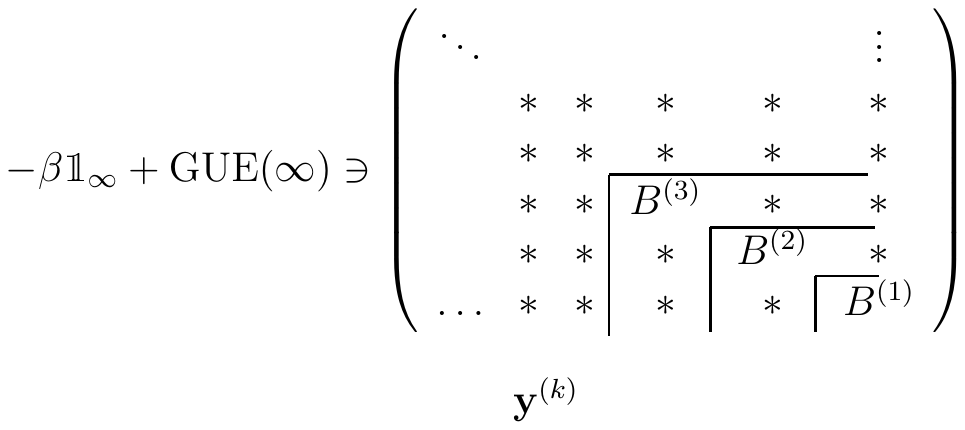}}

   \vspace*{-16.4cm}
 \noindent {\footnotesize Figure 1. Two matrices $B$ and $A$ with interacting minors for $\rho=3$: ~ $\mbox{spec} ~B^{(3)}\leq \mbox{spec} ~A^{(1)}$, ~$\mbox{spec} ~B^{(2)}\leq \mbox{spec} ~A^{(2)}$,~$\mbox{spec} ~B^{(1)}\leq \mbox{spec} ~A^{(3)}$. Consider the spectra ${\bf y}^{(k)}$ and ${\bf x}^{(k)}$ of the principal minors of $B$ and $A$, combined as indicated by the arrows:  $\ldots,{\bf y}^{(3)}, ~{\bf y}^{(2)}\cup {\bf x}^{(1)} , ~ {\bf y}^{(1)}\cup {\bf x}^{(2)} ,~{\bf x}^{(3 )},\ldots.$}

  \bigbreak

 This conditioning implies a coupling of the matrices $A$ and $B$, so that the probability of any event $E$ will  in effect be a conditional probability 
\be
\BP(E):=\BP^{\mbox{\tiny GUE}}\left( E ~ \Bigr|~\bigcap_1^\rho \{ {  y}^{(i)}_i\leq x ^{(\rho-i+1)}_1   \}\right),
 \label{condProb}\ee
with $\BP^{\mbox{\tiny GUE}}$ denoting the probability of $A$ and $B$ viewed as independent GUE-matrices.

\bigbreak

{\bf II. Double Aztec Diamonds:}   
   As alluded to before, consider domino tilings of two overlapping Aztec diamonds $A$ and $B$, each of size $N$, with an overlap of size $\rho$, with $B$ shifted below $A$ by merely one square; this double Aztec diamond model was introduced in \cite{AJvM}. See Figs. 2 and 4 for an illustration: the diamond $A$ ($B$) corresponds to the blue side (red side). 
   
   In analogy with the single Aztec diamond, domino tilings of double Aztec diamonds lead to a determinantal process of particles as well. Here also,  macroscopicaly there are frozen regions within two arctic circles or ellipses (overlapping or not) inscribed in the two diamonds when the size $N$ gets large. We will be interested in the situation, where the overlap $\rho$ of the two diamonds remains fixed, while $N\to \infty$; this problem was studied in \cite{ACJvM}.  
  For finite $N$ and $\rho$, the domino tiling leads to a determinantal process, which  in the limit $N\to\infty$, keeping the overlap $\rho$ finite, tends to the so-called  {\em edge-tacnode process}, another determinantal process; a summary of \cite{ACJvM} will be presented in section 3.
  
     This process consists of dot-particles ($\in \BR$) as in the simulation of Fig. 2, blue dots ${\bf x}^{(u)}$ and red dots ${\bf y}^{(u)}$ belonging to a sequence of vertical lines through the left-hand diamond $A$  and through the right-hand diamond $B$ respectively, with $\rho-1$ vertical lines in the overlap region containing $\rho$ dots, some red, some blue, exactly as careful inspection of Fig. 2 would reveal. The index $(u)$ refers to the vertical lines.  We assemble the blue dots ${\bf x}$'s and the red dots ${\bf y}$'s as follows:  ($2\leq \dt\leq \rho-2$)
  \be \ldots,{\bf y}^{(n)},\ldots,{\bf y}^{(\rho )},{\bf y}^{(\rho-1)}\cup {\bf x}^{(1)} ,\ldots, {\bf y}^{(\rho-\dt)}\cup {\bf x}^{(\dt)} ,\dots, {\bf y}^{(1)}\cup {\bf x}^{(\rho-1)}  ,{\bf x}^{(\rho )},\ldots, {\bf x}^{(n)},\ldots\label{spectra2}\ee

\newpage

   \hspace*{2cm} \includegraphics[height=2in]{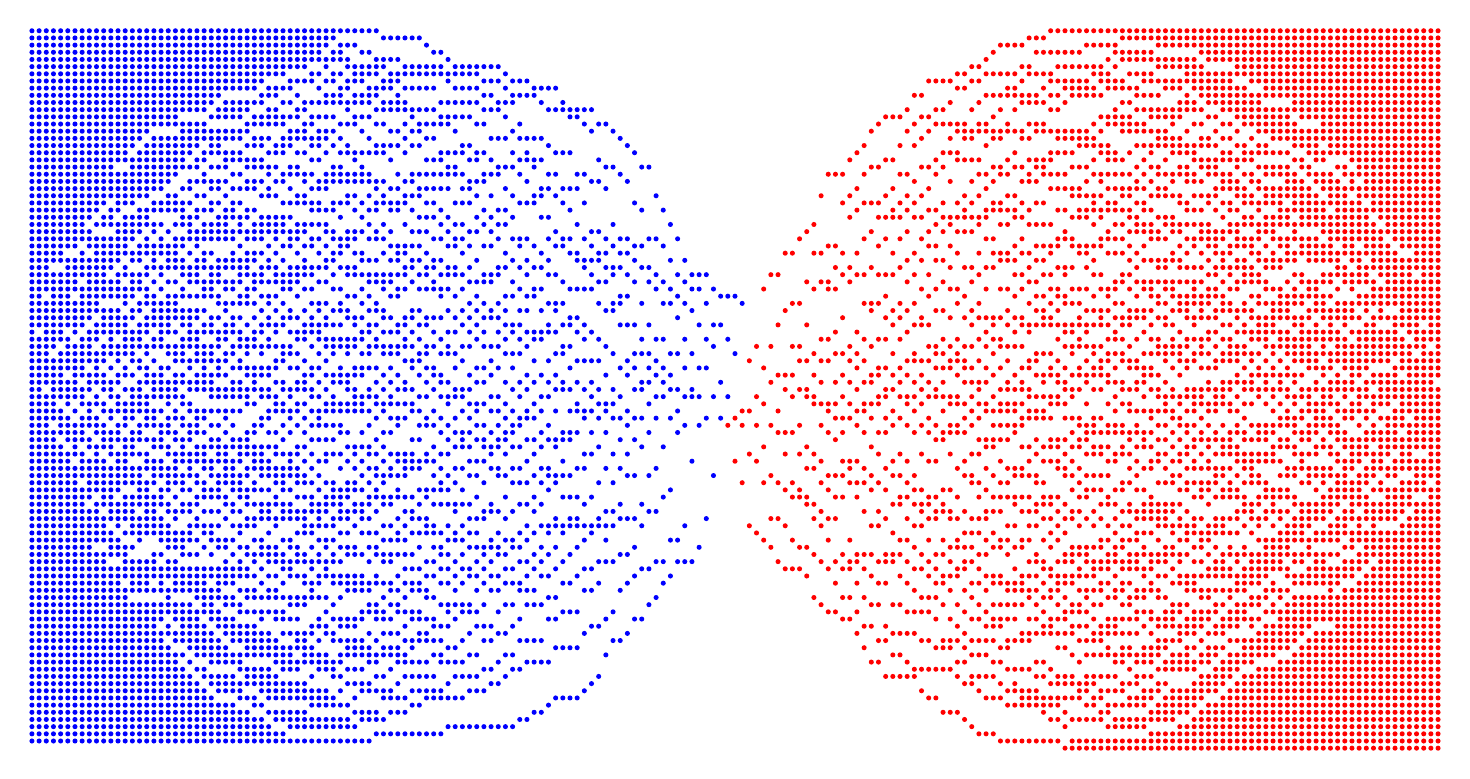}
  
   \hspace{ .1cm}
 
   \setlength{\unitlength}{0.030in}\begin{picture}(0,0)
   \thinlines
                \put(91.0,20){\line(0, 1){50}}
                 \put(93.0,20){\line(0, 1){50}}
                  \put(95.0,20){\line(0, 1){50}}
        \put(45,80){\makebox(0,0) {  $\mbox{\tiny Blue ${\bf x}$-dots~~~~~~~A}$ }}
        \put(135,80){\makebox(0,0) {  $\mbox{\tiny B~~~~~~~~~~~Red ${\bf y}$-dots}$ }}
 \end{picture}

\vspace*{-1cm}

   \noindent {\footnotesize Figure 2. The top picture shows the blue and red $\mathbb{L}$-particles, ${\bf x}^{(k)}$ and ${\bf y}^{(k)}, $ in a simulation for $n=100$ and $\rho=4$. Since $n$ is very large compared to $\rho$, this represents the edge-tacnode process, as given by the blue and red dots belonging to consecutive vertical lines. Notice the two arctic circles and their near-tangency in the overlap of the diamonds. (Courtesy of Sunil Chhita)}
  
  \vspace*{.5cm}

   %
   %
   More details on this model will be explained in section 2. In viewing the process as a determinantal process with levels $u\in \BZ $, it will be convenient to reparametrize the sequences (\ref{spectra1}) and (\ref{spectra2}) by means of a single notation ${\bf z}^{(u)}$, $u \in \BZ$, with ${\bf z}^{(0)}={\bf y}^{(\rho)}$ and ${\bf z}^{(\rho)}={\bf x}^{(\rho)}$, as follows:
   \be
   \dots,{\bf z}^{(\rho-n)}, \ldots,{\bf z}^{(0 )},{\bf z}^{(1 )},\ldots,{\bf z}^{(\dt )},\dots,
   {\bf z}^{(\rho-1 )}, {\bf z}^{(\rho )},\dots, {\bf z}^{(n )},\ldots
 \label{spectra3} \ee
 The two notations, on the one hand (\ref{spectra1}) or (\ref{spectra2}) and on the other hand (\ref{spectra3}) will be used invariably throughout the paper. Sometimes (\ref{spectra1}) or (\ref{spectra2}) will be more convenient and sometimes (\ref{spectra3}) will be handier. 
   \medbreak
  
 The main message of this paper is the following equivalence: 
  $$\left\{\begin{array}{l}
 \mbox{\em The statistics of the eigenvalues (\ref{spectra1})}
 \\
 \mbox{\em of the consecutive minors $A^{(i)}$ and $B^{(i)}$}
\\ \mbox{\em of the coupled GUE-matrices $A$ and $B$}
 \end{array}\right\}
  \Leftrightarrow 
 \left\{\begin{array}{l}
 \mbox{\em The statistics of the }
 \\\mbox{\em dot-particles (\ref{spectra2}) in the}
 \\
 \mbox{\em double-Aztec diamonds}
\\ \mbox{\em when its size $N\to \infty$}
  \end{array}\right\}
  $$


The analogue of Baryshnikov's result here is that, given the spectra ${\bf x}={\bf x}^{(n)}$ and ${\bf y}={\bf y}^{(n)}$ of the coupled matrices $A$ and $B$ for $n\geq \rho$, the intermediate spectra in the sequence 
(\ref{spectra2}) interlace and are uniformly distributed within a certain double cone ${\mathcal C}^{(n)}_{{\bf x},{\bf y}}$ to be defined in (\ref{cone}) and (\ref{unifmeas}); see Theorem \ref{Th1.2}. For the diamonds, the proof of this statement and the interlacing will be inherited from taking a continuous limit of a discrete uniformity result on a determinantal point process defined on double Aztec diamonds, while for the coupled GUE-matrices it is inherited from the single GUE result. { So, the results follow from replacing the single Aztec diamond model, including RSK and Young diagrams by this double Aztec diamond model.}

The uniform measure on the double cone is an indication that ``statistically speaking" the double Aztec diamonds are pushed together very gently, so that the two arctic circles separating the frozen and stochastic region merely touch, without getting much deformed, as appears from Fig. 2.

In \cite{ACJvM}, the edge-tacnode kernel was obtained, as a perturbation of the GUE-minor  kernel (\ref{8.6}) and has the following form, with a kernel ${\cal K}^\beta ( \lambda ,\kappa )$ and functions ${\cal A}^{\beta,z }_{u  }(\kappa)$ and ${\cal B}^{\beta,z }_{v }(\lambda)$ to be defined in (\ref{defAB})\footnote{In terms of the notation $M_{[i,j]}:=\left( M(\lb,\kappa)\right)_{i\leq \lb, \kappa\leq j}.$}: 
\be
\begin{aligned}
\BK_{\beta, \rho}^  {\mbox{\tiny TAC}} (u_1,z_1;u_2,z_2)
 &=  \BK^ {\mbox{\tiny minor}}  (u_1,\beta-z_1 ;~u_2 ,\beta-z_2 ) \\
&~~~+2\Bigl\la (\Id \! -\! {\cal K}^\beta ( \lambda ,\kappa )) _{_{\mbox{\tiny $  [-1,-\rho]$}}} ~{\cal A}^{\beta,z_1-\beta}_{u_1 }(\kappa), {\cal B}^{\beta,z_2-\beta}_{u_2 }(\lambda)\Bigr\ra
  _{_{ \ell^2(-\rho,\ldots,\infty) }}.
 \\
 \end{aligned}\label{dGUE}\ee

\noindent We now state three main Theorems, which will be valid for each of the two models, whether they be eigenvalues of the GUE-minors model, or the dot-particles in the double Aztec diamond model. The first statement  concerns interlacing and an associated determinantal point process, the second one deals with the joint ${\bf x}^{(n)}$ and ${\bf y}^{(n)}$-distribution in terms of uniform measure on the double cone $ {\mathcal C}^{(n)}_{{\bf x},{\bf y}}$  and the last one provides the distribution of the eigenvalues ${\bf x}^{(n)}$ or ${\bf y}^{(n)}$ level by level.

\begin{theorem} \label{Th1.1} Given integers $n\geq \rho\geq 1$, the ${\bf z}^{(u)}$'s, defined in (\ref{spectra3}), with
$$
{\bf z}^{(u)}\in \left\{\begin{aligned}
&\BR^{u}\mbox{ for $ u \geq \rho$}
\\
&\BR^{ \max(\rho-u,\rho)}\mbox{ for $ u\leq \rho $}
\end {aligned}\right.$$
  interlace as follows\footnote{${\bf z}^{(\rho)}\prec\ldots\prec {\bf z}^{(n)}$ and ${\bf z}^{(0)}\prec\ldots\prec {\bf z}^{(\rho-n)}$ both intertwine in the usual way, and otherwise we use the   
 intertwining $ {\bf z} \curlyeqprec {\bf z}'$,  meaning that  both vectors $ {\bf z}$ and  $ {\bf z}' \in \BR^{\rho}$ intertwine, and satisfy ${  z}_{\rho} \leq {  z}'_{\rho}$.}, (see Fig. 3)
  \be\begin{aligned}
 {\bf z}^{(\rho-n)}\succ\ldots \succ {\bf z}^{(0)}\curlyeqprec   {\bf z}^{(1)} 
\curlyeqprec\ldots \curlyeqprec 
  {\bf z}^{(\rho-1)}
\curlyeqprec {\bf z}^{(\rho)}\prec\ldots\prec {\bf z}^{(n)} 
.\end {aligned}\label{xy-interlace}\ee
  This sequence of vectors ${\bf z}^{(u)}$ form a determinantal point process with kernel given by the edge-tacnode kernel $\BK_{\beta, \rho}^  {\mbox{\tiny TAC}} (u_1,z_1;u_2,z_2)$.
  %

\end{theorem}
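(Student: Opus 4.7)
The statement consists of an interlacing assertion and a determinantal kernel identification, each in both of the two models. For the double Aztec diamond, both assertions are inherited from the finite-$N$ analysis of \cite{ACJvM} via the scaling limit $N\to\infty$ with $\rho$ fixed: at every finite $N$ the non-intersecting paths underlying the dominoes form an interlacing Lindstr\"om--Gessel--Viennot ensemble, whose correlation kernel was shown in \cite{ACJvM} to converge to $\BK_{\beta,\rho}^{\mbox{\tiny TAC}}$. The real content is thus the parallel statement for the coupled GUE.

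\textbf{Interlacing for coupled GUE.} In the outer wings $u\geq\rho$ and $u\leq 0$ the claims ${\bf z}^{(\rho)}\prec\cdots\prec {\bf z}^{(n)}$ and ${\bf z}^{(0)}\prec\cdots\prec {\bf z}^{(\rho-n)}$ are Cauchy interlacing for the principal minors of the Hermitian matrices $A$ and $B$. In the overlap $0\leq u<\rho$ I combine Cauchy interlacing on each side (${\bf x}^{(u)}\prec{\bf x}^{(u+1)}$ and ${\bf y}^{(\rho-u-1)}\prec{\bf y}^{(\rho-u)}$) with the conditioning inequalities $y_i^{(i)}\leq x_1^{(\rho-i+1)}$ of (\ref{constraint}). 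These place every $y$-coordinate of ${\bf z}^{(u)}={\bf y}^{(\rho-u)}\cup {\bf x}^{(u)}$ below every $x$-coordinate; the $\rho-1$ interior inequalities defining ${\bf z}^{(u)}\curlyeqprec {\bf z}^{(u+1)}$ are then the ordinary Cauchy ones, and the boundary inequality $z_\rho^{(u)}\leq z_\rho^{(u+1)}$ follows from one further application of (\ref{constraint}) together with $x_u^{(u)}\leq x_{u+1}^{(u+1)}$.

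\textbf{Determinantal structure and kernel.} Under the conditioning (\ref{condProb}) and Baryshnikov's formula (\ref{Bary1}) applied independently to $A$ and $B$, the joint density of $\{{\bf x}^{(k)},{\bf y}^{(k)}\}_{k=1}^n$ is, up to normalization, a product of Vandermonde/Gaussian factors at the top levels, interlacing indicators down the two Gelfand--Tsetlin cones, and the $\rho$ coupling indicators. Rewriting each interlacing indicator as a Karlin--McGregor $(0,1)$-determinant puts the law in Eynard--Mehta form, producing a determinantal point process on the ${\bf z}^{(u)}$. The main obstacle is identifying the resulting correlation kernel with $\BK_{\beta,\rho}^{\mbox{\tiny TAC}}$: the $\rho$ coupling indicators produce a rank-$\rho$ perturbation of the unperturbed kernel $\BK^{\mbox{\tiny minor}}(u_1,\beta-z_1;u_2,\beta-z_2)$, and after biorthogonalizing via the double-contour-integral representation of $\BK^{\mbox{\tiny minor}}$, the Gram matrix for this perturbation should become the operator $(\Id-{\cal K}^\beta)_{[-1,-\rho]}$ while the rank-one data become the functions ${\cal A}^{\beta,z}_u$ and ${\cal B}^{\beta,z}_v$ appearing in (\ref{dGUE}). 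Matching the two expressions term by term completes the proof and simultaneously confirms the equivalence with the Aztec diamond limit from \cite{ACJvM} that is the main message of the paper.
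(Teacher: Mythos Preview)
Your treatment of the interlacing for both models, and of the determinantal structure for model~II, is correct and matches the paper: Cauchy interlacing plus the constraints~(\ref{constraint}) gives~(\ref{xy-interlace}) for the coupled GUE (the paper simply declares the equivalence with~(\ref{cone}) ``obvious'' just after that formula), and for the double Aztec diamond both the interlacing and the kernel are imported from~\cite{ACJvM} via the scaling limit in Theorem~\ref{a=1}.

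For the determinantal kernel in model~I you take a genuinely different route from the paper. You propose to cast the conditioned GUE-minors measure directly in Eynard--Mehta form and biorthogonalize to land on~(\ref{dGUE}). The paper does \emph{not} attempt this. Instead it works backwards from the tacnode side: Sections~5--6 compute, starting from the kernel~$\BK_{\beta,\rho}^{\mbox{\tiny TAC}}$, the one- and two-level densities of the tacnode process (Theorems~\ref{mainTh1} and~\ref{Th:JProb}); Section~7 identifies the quantity~$\Gamma_{\bf xy}$ appearing there with $\mbox{Vol}({\mathcal C}^{(n)}_{{\bf x}{\bf y}})$; and Section~8 then observes that the resulting full joint density of all the ${\bf x}^{(k)},{\bf y}^{(k)}$ coincides with the coupled-GUE density~(\ref{15''}) from Section~2, up to the two normalizing constants. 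Since both are probabilities, those constants must agree (which simultaneously proves~(\ref{ineq})), the two point processes have the same law, and model~I inherits the determinantal structure and the kernel~$\BK_{\beta,\rho}^{\mbox{\tiny TAC}}$ from model~II.

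Your direct route would be more self-contained if it could be carried out, but the clause ``the Gram matrix \ldots\ should become the operator $(\Id-{\cal K}^\beta)_{[-1,-\rho]}$'' is precisely the computation that is missing. One would have to write each overlap transition $\curlyeqprec$ as a single Lindstr\"om--Gessel--Viennot determinant, splice the two outer Gelfand--Tsetlin chains and the $\rho$ coupling indicators into one Eynard--Mehta chain along the $u$-axis, and then actually invert the resulting Gram block and match it against ${\cal A},{\cal B},{\cal K}^\beta$. That is a substantial calculation which you have only asserted; the paper circumvents it entirely by the density-matching argument above.
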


 In view of (\ref{spectra1}) and (\ref{constraint}), it is natural to consider the following ``{\it double cone}", given arbitrary ${\bf x}={\bf x}^{(n)}={\bf z}^{(n)}$ and ${\bf y}={\bf y}^{( n)}={\bf z}^{(\rho-n)}\in \BR^n$,
%
 %
   \be
   {\mathcal C}^{(n)}_{{\bf x},{\bf y}} =\left\{\begin{array}{l}  {\bf y}  \succ {\bf y}^{(n-1)} \succ\ldots \succ    {\bf y}^{(1)}\mbox{  for any ${\bf y}^{(i)}\in \BR^i$ }
   \\
   {\bf x} \succ {\bf x}^{(n-1)}\succ \ldots \succ {\bf x}^{(1)} \mbox{  for any ${\bf x}^{(i)}\in \BR^i$}\\
    \mbox{and for $1\leq i\leq \rho$, subjected to}
   \\
   \max  ({\bf y}^{(i)})={  y}^{(i)}_i\leq {  x}^{(\rho-i+1)}_1=\min  (  {\bf x}^{(\rho-i+1)}) \\ 
 \end{array} \right\} \subset \BR^{n(n-1)} .\label{cone}\ee
 The superscript on ${\cal C}^{(n)}$ will be omitted, when unnecessary. The interlacing conditions (\ref{xy-interlace}) and (\ref{cone}) are obviously equivalent.
 Then consider
 uniform measure on ${\mathcal C}_{{\bf x},{\bf y}}$,
 \be
  \begin{aligned}d\mu_{{\bf x},{\bf y}}&( {\bf x}^{(n-1)} ,\ldots,{\bf x}^{( 1)},{\bf y}^{(n-1)},\ldots,{\bf y}^{( 1)})
  :=d\mu_{\bf x}d\mu_{\bf y}
  \prod_{i=1}^{\rho } \Id_{y_i^{(i)}\leq x_1^{(\rho-i+1)}}   
 \end{aligned},
 \label{unifmeas}\ee
and the volume $
\mbox{ Vol} ({\mathcal C}^{(n)}_{{\bf x},{\bf y}})=\int_{\BR^{n(n-1)}}d\mu_{{\bf x},{\bf y}}
$ of the cone ${\mathcal C}^{(n)}_{{\bf x},{\bf y}}$. 
Also define the kernel
$$\begin{aligned}
{\cal K}^\beta (\lambda,\kappa)&:=\oint_{\Gamma_0}  \frac{d\zeta} {(2\pi \I)^2 }\int_{ 0^{+}+i\BR\uparrow} \frac{d\om}{ \om-\zeta   }
   \frac{e^{-2\zeta^2+4 \beta \zeta }} { e^{-2\omega^2+4 \beta \omega }}
\frac{\zeta^{\kappa } }{\omega^{\lambda +1}}.
\end{aligned}
$$

\bigbreak

\noindent This enables us, as already mentioned, to state the analogue of Baryshnikov's result, namely that fixing the spectra ${\bf x}$ and ${\bf y}$ of the matrices $A$ and $B$ in the coupled GUE-matrix model {\bf (I)}, the intermediate spectra form a uniformly distributed double cone (\ref{cone}). Moreover this fact holds as well for model {\bf (II)}. 

 \begin{theorem} \label{Th1.2}
  For both models {\bf I} and {\bf II}, we have
  \be\begin{aligned}
 \BP& \left(
  \bigcap_{1}^{n-1}
  \{{{\bf x}^{(k)}\in d{\bf x}^{(k)} ,~{\bf y}^{(k)}\in d{\bf y}^{(k)}  }  \}\Bigr|~{{\bf x}^{(n)}={\bf x}^{ } ,~{\bf y}^{(n)}=  {\bf y}^{ }  }
  \right)= \frac {    d\mu_ {{\bf x} ,{\bf y}} }{  \mbox{  Vol} ({\mathcal C}^{(n)}_{{\bf x}{\bf y} })  },
   \end{aligned}
 \label{14}\ee
with an explicit expression for $\mbox{  Vol} ({\mathcal C}^{(n)}_{{\bf x}{\bf y} })$ given by formula (\ref{vol1}) in Theorem \ref{Th:JProb}. The joint density of  ${\bf x}^{(n)}, \ldots , {\bf x}^{(1)} $ and $    {\bf y}^{(n)}, \ldots , {\bf y}^{(1)}$  
  is given by, setting ${\bf x}^{(n)}={\bf x}^{ }$ and ${\bf y}^{(n)}={\bf x}^{ }$, 
 %
%
  %
 \be\begin{aligned}
 \BP& \left(
  \bigcap_{1}^{n}
  \{{{\bf x}^{(k)}\in d{\bf x}^{(k)} ,~{\bf y}^{(k)}\in d{\bf y}^{(k)}  }  \}
   %
  \right)
  =\frac{\rho_n^{ \mbox{\tiny GUE}}({\bf x}\!-\!\beta) 
   \rho_n^{\mbox{\tiny GUE}}( {\bf y}\!+\!\beta)}
 {\det(\Id- {\mathcal K}^{\beta}
 )_{[-1, -\rho]} }
 \frac { d{\bf x}d{\bf y}  d\mu_ {{\bf x} ,{\bf y}} }{\mbox{\em Vol} ({\mathcal C} _{{\bf x} }) \mbox{\em Vol} ({\mathcal C}_{ {\bf y} }) }
   .\end{aligned}
 \label{15}\ee
As a by-product, given two independent GUE-matrices $A$ and $B$, one has the following expression for the probability (see (\ref{condProb}) for the distinction between $\BP $ and $\BP^{\mbox{\tiny GUE}}$):
      \be\begin{aligned}\BP^{\mbox{\tiny GUE}}\Bigl(\bigcap_1^\rho &\left\{ \max \mbox{\em spec}(B^{(i)})\leq \min \mbox{\em spec}(A^{(\rho-i+1)})
\right\}\Bigr)
=\BP^{\mbox{\tiny GUE}} \left(\bigcap _1^{\rho}\{y_i^{(i)}\leq  x_{1}^{(\rho-i+1)} 
 \}\right)
\\&
 \!\!\!\!=\det(\Id- {\mathcal K}^{\beta}
 )_{[-1, -\rho]}   
 \end{aligned}\label{ineq}\ee
$${\footnotesize
=\det \left(
\begin{array}{cccccccccc}
a_0- {\widetilde c}_{0}& -{\widetilde c}_{-1}& -{\widetilde c}_{-2}&\ldots
&\dots&-{\widetilde c}_{ 1-\rho}
\\
a_1- {\widetilde c}_{1}&a_0- {\widetilde c}_{0}& -{\widetilde c}_{-1}& -{\widetilde c}_{-2}&\ldots
&-{\widetilde c}_{ 2-\rho}
\\
~~~~\ddots&~~~~~~\ddots&~~~~~~\ddots&
    \\ &&&&\ddots\\&&&\ddots&\ddots&   -{\widetilde c}_{-1}
\\  a_{\rho-1}- {\widetilde c}_{\rho-1}&&&&a_1- {\widetilde c}_{1}&    a_0- {\widetilde c}_{0}%
\end{array}
\right),
}$$
 
  \newpage

\vspace*{-8cm}    
    \hspace*{-2cm}    
 { \includegraphics[width=200mm,height=300mm]{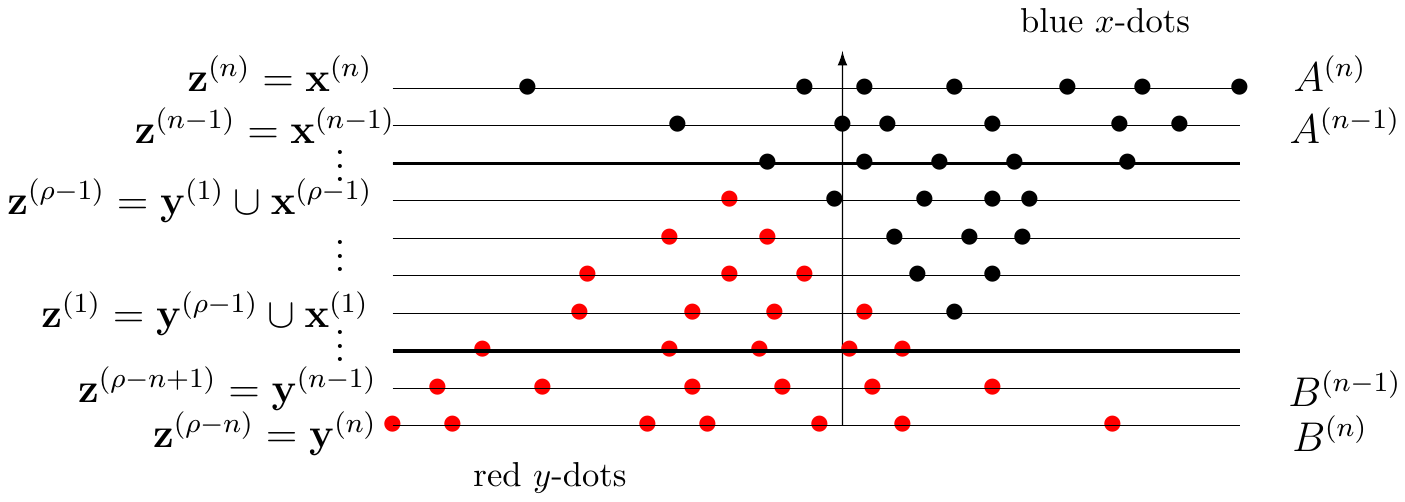}}

 \vspace*{-19cm}    
{\footnotesize\rm Figure 3. The interlacing set of blue and red eigenvalues ($\in \BR$) of the minors $A^{(k)}$ and $B^{(k)}$, subjected to the constraint (\ref{constraint}), or the blue and red dots in the double Aztec diamond model, for $\rho=5$.}

\vspace*{1cm}

  %

\noindent where this Toeplitz matrix consists of quantities $a_i$ and $\widetilde c_i  $, defined in (\ref{Cinv}) and (\ref{ck}).
\end{theorem}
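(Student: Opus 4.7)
The plan is to derive (\ref{14}) directly from Baryshnikov's single-matrix result (\ref{Bary1}) together with the independence of $A$ and $B$ before conditioning, to obtain (\ref{15}) by multiplying by the joint Gaussian marginal and renormalizing by the probability of the coupling event, and finally to establish the Fredholm/Toeplitz identity (\ref{ineq}) by integrating out the intermediate spectra and collapsing the resulting $2\rho$-fold integral to a $\rho\times\rho$ determinant via Andreief. For the double-Aztec-diamond model \textbf{II} the same three identities are then inherited, either by invoking Theorem \ref{Th1.1} or by performing the analogous calculation on the discrete tiling model of \cite{ACJvM} and passing to the continuum limit in the $n^{-1/2}$ scaling of \cite{JoNo}.

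\textbf{The uniform law on the double cone.} Before the coupling is imposed, $A$ and $B$ are independent, so conditional on ${\bf x}^{(n)}={\bf x}$ and ${\bf y}^{(n)}={\bf y}$, Baryshnikov's theorem applied separately to each matrix puts the intermediate spectra uniformly on the product cone $\mathcal{C}_{\bf x}\times\mathcal{C}_{\bf y}$. The $\rho$ spectral inequalities involve only the intermediate spectra, so further conditioning simply restricts this product uniform law to the sub-domain $\mathcal{C}_{{\bf x},{\bf y}}^{(n)}$, which is exactly (\ref{14}). Unconditioning on $({\bf x},{\bf y})$ then yields (\ref{15}): multiply (\ref{14}) by the joint Gaussian marginal $\rho_n^{\mbox{\tiny GUE}}({\bf x}-\beta)\rho_n^{\mbox{\tiny GUE}}({\bf y}+\beta)d{\bf x}\,d{\bf y}$ of the top spectra under $\BP^{\mbox{\tiny GUE}}$, expand $d\mu_{{\bf x},{\bf y}}/\mbox{Vol}(\mathcal{C}_{{\bf x},{\bf y}}^{(n)})$ via definition (\ref{unifmeas}), and divide by the total coupling probability $\det(\Id-\mathcal{K}^{\beta})_{[-1,-\rho]}$ supplied by (\ref{ineq}); the volumes $\mbox{Vol}(\mathcal{C}_{{\bf x},{\bf y}}^{(n)})$ cancel between numerator and denominator and what remains is exactly the right-hand side of (\ref{15}).

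\textbf{Main obstacle: the Toeplitz determinant (\ref{ineq}).} Since the coupling event depends only on minors of size $\leq\rho$, a first marginalization gives
\[
\BP^{\mbox{\tiny GUE}}\!\Bigl(\bigcap_1^\rho \{y_i^{(i)}\leq x_1^{(\rho-i+1)}\}\Bigr)
=\int\!\!\int \rho_\rho^{\mbox{\tiny GUE}}({\bf x}-\beta)\,\rho_\rho^{\mbox{\tiny GUE}}({\bf y}+\beta)\,
\frac{\mbox{Vol}(\mathcal{C}_{{\bf x},{\bf y}}^{(\rho)})}{\mbox{Vol}(\mathcal{C}_{\bf x})\,\mbox{Vol}(\mathcal{C}_{\bf y})}\,d{\bf x}\,d{\bf y},
\]
for ${\bf x},{\bf y}\in\BR^\rho$. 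Using $\mbox{Vol}(\mathcal{C}_{\bf x})=\Delta_\rho({\bf x})/\prod_1^{\rho-1}k!$ from (\ref{Bary1}) to cancel one Vandermonde in each $\rho_\rho^{\mbox{\tiny GUE}}$, the integrand reduces to a single Vandermonde in each variable times Gaussian weights times $\mbox{Vol}(\mathcal{C}_{{\bf x},{\bf y}}^{(\rho)})$. The explicit bi-determinantal formula (\ref{vol1}) from Theorem \ref{Th:JProb} rewrites $\mbox{Vol}(\mathcal{C}_{{\bf x},{\bf y}}^{(\rho)})$ in a product-of-determinants form tailor-made for Andreief's identity, which collapses the $2\rho$-fold integral into a single $\rho\times\rho$ determinant. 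The $(i,j)$ entries depend only on $i-j$, giving Toeplitz structure: unconstrained one-matrix Gaussian moments produce the diagonal piece $a_{i-j}$, while the mixed terms enforcing the inequalities contribute a double-contour correction $\widetilde c_{i-j}$, precisely the matrix displayed in the last identity of (\ref{ineq}). The final step is to recognize this Toeplitz determinant as the Fredholm determinant $\det(\Id-\mathcal{K}^{\beta})_{[-1,-\rho]}$ by matching its entries with values of $\mathcal{K}^{\beta}(\lambda,\kappa)$ on $\ell^2(\{-1,\ldots,-\rho\})$. The main obstacle is the combinatorial bookkeeping that casts $\mbox{Vol}(\mathcal{C}_{{\bf x},{\bf y}}^{(\rho)})$ as a bi-determinant compatible with Andreief; once this is done the remaining steps are routine double-contour manipulations.

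\textbf{Transfer to model II.} For the double Aztec diamond model, (\ref{14})–(\ref{ineq}) follow either from the determinantal equivalence stated in Theorem \ref{Th1.1} together with the perturbation representation (\ref{dGUE}) of the edge-tacnode kernel — whose normalization is exactly $\det(\Id-\mathcal{K}^{\beta})_{[-1,-\rho]}$ — or by repeating the three steps above on the discrete uniformity statement on $N$-sized double Aztec diamond tilings from \cite{ACJvM} and passing to the $N\to\infty$ limit under the Johansson–Nordenstam scaling.
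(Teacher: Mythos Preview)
Your derivation of (\ref{14}) for Model~I is essentially the paper's: Baryshnikov applied to each matrix, then restriction to the sub-cone $\mathcal C^{(n)}_{{\bf x},{\bf y}}$. But your passage from (\ref{14}) to (\ref{15}) is garbled. You multiply (\ref{14}) by the \emph{unconditioned} GUE marginal $\rho_n^{\mbox{\tiny GUE}}({\bf x}-\beta)\rho_n^{\mbox{\tiny GUE}}({\bf y}+\beta)\,d{\bf x}\,d{\bf y}$ and divide by the coupling probability; that produces an extra factor $\mbox{Vol}(\mathcal C_{\bf x})\mbox{Vol}(\mathcal C_{\bf y})/\mbox{Vol}(\mathcal C^{(n)}_{{\bf x}{\bf y}})$ relative to (\ref{15}), so the claimed cancellation of $\mbox{Vol}(\mathcal C^{(n)}_{{\bf x}{\bf y}})$ does not occur. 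The paper runs the argument the other way: it first writes down (\ref{15}) with the unknown constant $\BP^{\mbox{\tiny GUE}}(\bigcap\{y_i^{(i)}\le x_1^{(\rho-i+1)}\})$ in the denominator (this is formula~(\ref{15''})), then integrates out the intermediate variables to get the marginal (\ref{15'}), and only then divides to recover (\ref{14'}). At that stage the identity (\ref{ineq}) is still \emph{open}.

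For (\ref{ineq}) your route is genuinely different from the paper's, and the sketch has gaps. First, formula~(\ref{vol1}) is a \emph{single} $n\times n$ determinant, not a ``bi-determinantal'' product; for $n=\rho$ it specializes to $\det\bigl[(x_i-y_j)^{\rho-1}\Id_{x_i>y_j}/(\rho-1)!\bigr]$ (see (\ref{86'})). The integrand is therefore $\Delta_\rho(x)\Delta_\rho(y)\times(\text{Gaussians})\times(\text{one mixed determinant})$, so a single application of Andreief does not suffice---you need an iterated Andreief (first in $x$, then in $y$), and then a nontrivial verification that the resulting $\rho\times\rho$ determinant is Toeplitz with entries $a_{i-j}-\tilde c_{i-j}$. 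None of this is carried out. Second, you are invoking the volume formula~(\ref{vol1}), whose identification with $\mbox{Vol}(\mathcal C^{(n)}_{{\bf x}{\bf y}})$ is the content of Section~7 (Proposition~\ref{volprop1}), itself a substantial induction.

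The paper does not attempt this direct integration at all. Instead it (i) proves uniformity for Model~II on the discrete double Aztec diamond by a path-counting argument and passes to the limit (Section~3, yielding (\ref{uniform})); (ii) computes the two-level density for Model~II directly from the edge-tacnode kernel (Section~6, yielding (\ref{joint1}) with $\Gamma_{{\bf x}{\bf y}}$); (iii) proves $\Gamma_{{\bf x}{\bf y}}=\mbox{Vol}(\mathcal C^{(n)}_{{\bf x}{\bf y}})$ (Section~7); and finally (iv) observes in Section~8 that the full joint density for Model~II, obtained by multiplying (i) and (ii), must agree with (\ref{15''}) for Model~I up to the normalizing constant; since both are probability densities, the constants match, which \emph{is} (\ref{ineq}). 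The explicit Toeplitz form is obtained separately in Proposition~\ref{PAB} by expanding the kernel $\mathcal K^\beta$. So your ``transfer to Model~II'' paragraph has the logic backwards: in the paper, the hard kernel work for Model~II is not a corollary but the \emph{input} that pins down (\ref{ineq}), and hence completes Theorem~\ref{Th1.2} for both models simultaneously.
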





\remark  ~
Formula (\ref{14}) holds as well for two arbitrary levels, not necessarily symmetric with regard to the overlap; it suffices to replace the cone $   {\mathcal C}^{(n)}_{{\bf x}{\bf y} }  $ by the cone defined in the same way, but for ${\bf x}\in \BR^n$ and ${\bf y}\in \BR^m$ belonging to different levels $n$ and $m$. The same is true for probability (\ref{15}) and will also be true for probability (\ref{joint}). This fact will follow after showing that the two models are equivalent by repeating the arguments of section 2. 

 \bigbreak

  The next statement deals with the distribution of the ${\bf x}^{(n)}$, the ${\bf y}^{(n)}$ for $n\geq \rho$ and the ${\bf y}^{(i)}\cup {\bf x}^{(\rho-i)}  $ for $1\leq i\leq \rho-1$, as in (\ref{xy-interlace}).
%
%
This will involve Gaussian moments on a half-line
$$\Phi_n(\eta):= \frac{2^n}{\sqrt{\pi} n!} \int^\infty_0 \xi^n e^{-(\xi-\eta)^2} d\xi 
,~~~~n\geq 0,$$
the usual Vandermonde $\Dt_n(x)$ and an extension for $0\leq\rho<n$, widely used in this work:
\be
 \begin{aligned}
 \widetilde \Dt_{n,\rho}^{\beta \pm}(x)&:= 
 \det\left(\begin{aligned}
& 1
\\
&x  \\
& \vdots  
\\
&x ^{n-\rho-1} 
 \\
&\Phi_{n-\rho} (\beta\pm x ) \\
&\vdots  \\
& \Phi_{n-1} (\beta\pm x ) \\
 \end{aligned}
\right)_{x=x_1,\ldots, x_{n}},~~\mbox{with}~~
\widetilde \Dt_{n,0}^{\beta \pm}(x)=\Dt_n(x). \end{aligned} \label{vander2}\ee
The distributions (\ref{outover}) and (\ref{Kout}) for the eigenvalues or the dots at a fixed level is a somewhat surprising extension of GUE, of interest in its own right:
 
\begin{theorem}\label{Th1.4}
   For both models  and {\bf  outside the overlap}   ($n\geq \rho$) , the density of ${\bf z}^{(n)}=
   {\bf x}^{(n)}$ 
   and ${\bf z}^{(\rho-n)}=
   {\bf y}^{(n)}$ 
   is given by\footnote{
    $c_{\rho,\delta} :=\prod_1^{\dt-1} \tfrac {2^\ell}{\ell !}  (\det(\Id- {\mathcal K}^{\beta})_{[-1,-\rho]} )^{-1}
  $ and $c_{\rho,0} :=   (\det(\Id- {\mathcal K}^{\beta})_{[-1,-\rho]} )^{-1}
  $.}, setting $\dt=n-\rho\geq 0$,         
  \be
\begin{aligned}
\BP& \left(
 {\bf x}^{(n)}\in d{\bf x}
 \right) =  c_{\rho,\delta}  \widetilde \Dt^{\beta+}_{n,\rho}(x) \Dt_{n}(x)  \prod _1^{n}
\frac {e^{-(x_i-\beta)^2} }{\sqrt{\pi}} d{\bf x}
\\
 \BP& \left(
 {\bf y}^{(n)}\in d{\bf y}  
  \right) 
 =c_{\rho,\dt} \Dt_{n}(y) \widetilde \Dt^{\beta-}_{n,\rho}(y)\prod _1^{n}
\frac {e^{-(y_i+\beta)^2} }{\sqrt{\pi}}d{\bf y} , \end{aligned}
 \label{outover}\ee
  whereas {\bf in the overlap} ($1\leq \dt\leq  \rho-1$) the joint density of the points ${\bf z}^{(\dt)}=({\bf y}^{(\rho-\dt)},{\bf x}^{(\dt)})$, 
   taken together, is given by  (with ${\bf z}\in \BR^{^\rho}$) 
  \be
 \begin{aligned}
 \BP&(\{({\bf y}^{(\rho-\dt)},{\bf x}^{(\dt)})\in d{\bf z}\})
 \\ \\& = {c_{\rho,0}}  
  \det
  \left( \begin{array}{c}
\frac {z^{\dt-1}}{\sqrt{\pi}} e^{-(\beta-z)^2}\\
 \vdots
 \\
  \frac {1}{\sqrt{\pi}} e^{-(\beta-z)^2}\\
  \textcolor[rgb]{1.00,0.00,0.00}{\Phi_{0}(\beta-z)}\\
   \vdots\\
   \textcolor[rgb]{1.00,0.00,0.00}{ \Phi_{\rho-\dt -1}(\beta-z)}
 \end{array} \right)_{z=z_1,\ldots,z_\rho}
 \times ~
  \det \left( \begin{array}{c}
 \textcolor[rgb]{1.00,0.00,0.00}{\frac {z^{\rho-\dt-1}}{\sqrt{\pi}} e^{-(z+\beta )^2}}\\
 \vdots
 \\
  \textcolor[rgb]{1.00,0.00,0.00}{\frac {1}{\sqrt{\pi}} e^{-(y+\beta )^2}}\\
  \Phi_{0}(z+\beta)\\
   \vdots\\
    \Phi_{ \dt -1}(z+\beta)
\end{array} \right)_{z=z_1,\ldots,z_\rho}\!\!\!\!\!d{\bf z}
  . \end{aligned}
   \label{Kout}\ee
Finally, the joint density of ${\bf z}^{(n)}=
   {\bf x}^{(n)}$ 
   and ${\bf z}^{(\rho-n)}=
   {\bf y}^{(n)}$ 
    for $n\geq \rho$ is given by
  \be\begin{aligned}
 \BP& \left(
  \{{{\bf x}^{(n)}\in d{\bf x} ,~{\bf y}^{(n)}\in d{\bf y}  }  \}
 \right)
 =\frac{\rho_n^{ \mbox{\tiny GUE}}({\bf x}\!-\!\beta) 
 ~ \rho_n^{\mbox{\tiny GUE}}( {\bf y}\!+\!\beta)}
 {\det(\Id- {\mathcal K}^{\beta}
 )_{[-1, -\rho]}}
 \frac {  \mbox{\em Vol} ({\mathcal C}^{(n)}_{{\bf x}{\bf y} }) d{\bf x}d{\bf y} }{\mbox{\em Vol} ({\mathcal C}_{{\bf x} }) \mbox{\em Vol} ({\mathcal C}_{ {\bf y} })    }
. \end{aligned}
 \label{joint}\ee

   \end{theorem}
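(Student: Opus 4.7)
The plan is to deduce all three formulas of Theorem \ref{Th1.4} from the joint density (\ref{15}) of Theorem \ref{Th1.2} by successive marginalization, using the explicit double-cone volume formula (\ref{vol1}) that will be established in Theorem \ref{Th:JProb}. Formula (\ref{joint}) is immediate: integrating (\ref{15}) over the intermediate levels $1\le k\le n-1$ replaces $d\mu_{{\bf x},{\bf y}}$ by $\mbox{Vol}({\mathcal C}^{(n)}_{{\bf x},{\bf y}})\,d{\bf x}d{\bf y}$ by the very definition of the uniform measure on the double cone ${\mathcal C}^{(n)}_{{\bf x},{\bf y}}$.

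For (\ref{outover}) I would further integrate (\ref{joint}) over ${\bf y}\in\BR^n$. Using Baryshnikov's Vandermonde volume $\mbox{Vol}({\mathcal C}_{{\bf y}})=\Delta_n({\bf y})/\prod_1^{n-1}k!$ together with the shift-invariance $\Delta_n({\bf y}+\beta)=\Delta_n({\bf y})$, the ratio $\rho_n^{\mbox{\tiny GUE}}({\bf y}+\beta)/\mbox{Vol}({\mathcal C}_{{\bf y}})$ collapses, up to constants, to a single Vandermonde $\Delta_n({\bf y})$ times the Gaussian weight $\prod_1^n e^{-(y_i+\beta)^2}/\sqrt{\pi}$. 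Inserting the determinantal expression for $\mbox{Vol}({\mathcal C}^{(n)}_{{\bf x},{\bf y}})$ from (\ref{vol1}), whose rows split into ${\bf x}$-polynomial rows, ${\bf y}$-polynomial rows, and coupling rows carrying the indicators of $y_i^{(i)}\le x_1^{(\rho-i+1)}$, the Andr\'eief identity integrates each ${\bf y}$-column against $e^{-(y+\beta)^2}$: the free polynomial rows yield ordinary Gaussian moments absorbed into the normalizing constant, while the coupled rows, restricted to half-lines by the indicators, produce precisely the half-line Gaussian moments $\Phi_j(\beta+x_1^{(\bullet)})$ appearing in (\ref{vander2}). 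This assembles the modified Vandermonde $\widetilde\Delta^{\beta+}_{n,\rho}({\bf x})$ on the ${\bf x}$-side of (\ref{outover}); the second line follows by the $({\bf x},\beta)\leftrightarrow({\bf y},-\beta)$ symmetry, and the constant $c_{\rho,\delta}$ is then fixed via (\ref{ineq}) by $\det(\Id-{\mathcal K}^\beta)_{[-1,-\rho]}$ together with the combinatorial factors from Baryshnikov's volume formula.

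For (\ref{Kout}) inside the overlap ($1\le\delta\le\rho-1$) the target vector ${\bf z}^{(\delta)}=({\bf y}^{(\rho-\delta)},{\bf x}^{(\delta)})\in\BR^\rho$ lives on both chains, so I would marginalize (\ref{15}) in two blocks. First integrate out the upper ${\bf x}$-chain ${\bf x}^{(n)}\succ\ldots\succ{\bf x}^{(\delta+1)}\succ{\bf x}^{(\delta)}$ against the Gaussian weight $\rho_n^{\mbox{\tiny GUE}}({\bf x}-\beta)$: a standard GUE-minor computation collapses this into a $\delta\times\delta$ determinant of polynomial entries times $e^{-(x-\beta)^2}$ on ${\bf x}^{(\delta)}$, accounting for the $\delta$ polynomial rows $z^{j}/\sqrt{\pi}\,e^{-(\beta-z)^2}$ in the first determinant of (\ref{Kout}). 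The upper ${\bf y}$-chain is handled symmetrically and supplies the $(\rho-\delta)$ polynomial rows $z^{j}/\sqrt{\pi}\,e^{-(z+\beta)^2}$ in the second determinant. It remains to integrate out the lower chains ${\bf x}^{(\delta-1)}\succ\ldots\succ{\bf x}^{(1)}$ and ${\bf y}^{(\rho-\delta-1)}\succ\ldots\succ{\bf y}^{(1)}$ which are coupled through (\ref{constraint}); level by level, each constrained top entry $y_i^{(i)}\le x_1^{(\rho-i+1)}$ becomes a half-line integral that contributes a $\Phi_j$-row, while the remaining unconstrained interlacing integrations give polynomial rows. Reassembling the rows in the order dictated by (\ref{vander2}) produces the product of the two $\rho\times\rho$ determinants displayed in (\ref{Kout}).

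The main obstacle is the explicit derivation --- carried out in Theorem \ref{Th:JProb} --- of the determinantal form (\ref{vol1}) for $\mbox{Vol}({\mathcal C}^{(n)}_{{\bf x},{\bf y}})$ that cleanly mixes polynomial, Gaussian and half-line-moment rows in the precise arrangement demanded by (\ref{vander2}), together with the careful bookkeeping of rows, columns and signs in the iterated Andr\'eief integrations. Once these are in place, matching the outcome to the $\widetilde\Delta^{\beta\pm}_{n,\rho}$ determinants in (\ref{outover}) and (\ref{Kout}) is a direct identification.
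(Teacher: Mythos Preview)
Your derivation of (\ref{joint}) is correct and matches the paper: it is obtained in Section~2 as formula (\ref{15'}) by integrating (\ref{15}) over the intermediate levels (for model~\textbf{I}), and in Section~6 as Theorem~\ref{Th:JProb} for model~\textbf{II}, with the identification $\Gamma_{{\bf x}{\bf y}}=\mbox{Vol}({\mathcal C}^{(n)}_{{\bf x}{\bf y}})$ supplied by Section~7.

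For (\ref{outover}) and (\ref{Kout}), however, the paper takes a completely different route. It does \emph{not} marginalize (\ref{joint}); instead it computes the one-level density directly from the determinantal structure of the tacnode process. In Theorem~\ref{mainTh1} the density at level $u$ is the $n$-point correlation $\det\bigl(\BK^{\mbox{\tiny TAC}}_{\beta,\rho}(u,z_i;u,z_j)\bigr)$, and the kernel is written as an inner product $\langle v_{z_i},w_{z_j}\rangle$ of vectors of length $n$ (outside the overlap) or $\rho+u$ (inside). Corollary~\ref{CA1'} then factors the determinant as $\det(v)\det(w)$; the explicit formulas for ${\cal A},{\cal B},{\cal K}$ from Proposition~\ref{PAB} and row operations do the rest.

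Your marginalization proposal for (\ref{outover}) contains a genuine gap. You describe the determinant (\ref{vol1}) as having ``rows split into ${\bf x}$-polynomial rows, ${\bf y}$-polynomial rows, and coupling rows carrying the indicators of $y_i^{(i)}\le x_1^{(\rho-i+1)}$''. This is not the structure of (\ref{vol1}): it is an $n\times n$ matrix whose $(i,j)$ entry mixes $x_i$ and $y_j$ throughout, and the intermediate spectra $y_i^{(i)},x_1^{(\rho-i+1)}$ have already been integrated out and do not appear. Consequently your claim that Andr\'eief integration of the ${\bf y}$-columns against $e^{-(y+\beta)^2}$ yields half-line moments $\Phi_j(\beta+x_1^{(\bullet)})$ is not supported: those arguments should be the top-level $x_i$, and nothing in (\ref{vol1}) produces a half-line restriction in the ${\bf y}$-variables. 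One \emph{can} attempt Andr\'eief via the alternative form (\ref{vol1'}), which does factor as $\det(\widetilde{\cal H}^{(\beta-x)}_n)$ times a determinant with columns in the $y_j$; but then the entries $\int_{\BR} y^k\bigl(P_{\dt+i}(y-\beta)+f_i({\bf x}-\beta,y-\beta)\bigr)e^{-(y+\beta)^2}dy$ involving the solutions $f_i$ of the linear system (\ref{f}) must be evaluated explicitly, and you have not done this. The same objection applies, more severely, to your sketch for (\ref{Kout}).
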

   
   The proofs of these statements will be an interplay between the two models, on the one hand the coupled random matrix model and on the other hand the double Aztec model.

   Section 2 will deal with the GUE-matrices with the coupling, whereas in section 3, we discuss what is needed to know about double Aztec diamonds. In particular we will show a uniformity result for double Aztec diamonds. After a notational and technical section 4, we will establish in section 5 the distribution of the particles at any given level, inside as well as outside the overlap. Section 6 will deal with the joint distribution of the particles at two different levels. Section 7 deals with an expression for the volume of the double cone (\ref{cone}). This is crucial in connecting the two models; this connection is done in section 8. 
   
   \medbreak 
   
   The authors acknowledge sharp and useful questions from G\'erard Ben Arous,  and helpful discussions with Alexei Borodin and Neil O'Connell.

\section{The coupled GUE-random matrix model}

In this  section, we prove Theorem \ref{Th1.2} for model {\bf I}.

\noindent {\em Proof of Theorem \ref{Th1.2}:}
Using the independence of the random matrices $A$ and $B$, using (\ref{Bary1'}) and (\ref{unifmeas}) and remembering the conditional probability (\ref{condProb}), one expresses the left hand side of (\ref{15}) as : (setting ${\bf x}={\bf x}^{(n)}$ and ${\bf y}={\bf y}^{(n)}$)
$$
\begin{aligned}
\BP^{\mbox{ }}&\left(\bigcap_{k=1}^n\{ {\bf x}^{(k)}\in d{\bf x}^{(k)},~
 {\bf y}^{(k)}\in d{\bf y}^{(k)}\} 
    \right)
 \\
&=\BP^{\mbox{\tiny GUE}} \left(\bigcap_{k=1}^n\{ {\bf x}^{(k)}\in d{\bf x}^{(k)},~
 {\bf y}^{(k)}\in d{\bf y}^{(k)}\} 
 ~\Bigr |~\begin{array}{l}\max  ({\bf y}^{(i)}) \leq  \min  (  {\bf x}^{(\rho-i+1)})\\
\mbox{   for  }1\leq i\leq \rho \end{array} \right)
\\&=\frac {\BP^{\mbox{\tiny GUE}} \left( \bigcap_{k=1}^n\{{\bf x}^{(k)}\in d{\bf x}^{(k)}\} \right)\BP^{\mbox{\tiny GUE}} \left( \bigcap_{k=1}^n\{{\bf y}^{(k)}\in d{\bf y}^{(k)}\}\right)
\prod_1^\rho \Id_{y_i^{(i)}\leq  x_{1}^{(\rho-i+1)}}}{\BP^{\mbox{\tiny GUE}} \left(y_i^{(i)}\leq  x_{1}^{(\rho-i+1)} ,~
 1\leq i\leq \rho\right)}.
\end{aligned}$$
\be \begin{aligned}
&=\frac{\rho_n^{ \mbox{\tiny GUE}}({\bf x}\!-\!\beta)   \rho_n^{\mbox{\tiny GUE}}( {\bf y}\!+\!\beta) }{\mbox{ Vol} ({\mathcal C}_{{\bf x}  })
\mbox{ Vol} ({\mathcal C}_{ {\bf y} })}   
~~\frac {d{\bf x}^{ } d\mu_{\bf x}   d{\bf y}^{ } d\mu_{\bf y}
 \prod_1^\rho \Id_{y_i^{(i)}\leq  x_{1}^{(\rho-i+1)}}
 }
{\BP^{\mbox{\tiny GUE}} \left(\bigcap _1^{\rho}\{y_i^{(i)}\leq  x_{1}^{(\rho-i+1)} 
 \}\right)}
 \\
 &
  =\frac{\rho_n^{ \mbox{\tiny GUE}}({\bf x}\!-\!\beta) 
   \rho_n^{\mbox{\tiny GUE}}( {\bf y}\!+\!\beta)}
{\BP^{\mbox{\tiny GUE}} \left(\bigcap _1^{\rho}\{y_i^{(i)}\leq  x_{1}^{(\rho-i+1)} 
 \}\right)}
~  \frac { d{\bf x}d{\bf y}  d\mu_ {{\bf x} ,{\bf y}} }{\mbox{ Vol} ({\mathcal C}_{{\bf x} }) \mbox{ Vol} ({\mathcal C}_{ {\bf y} }) }
   .\end{aligned}
  \label{15''}\ee 
This equals the right hand side of (\ref{15}), upon using the definition (\ref{unifmeas}) of uniform measure on the double cone ${\mathcal C}_{{\bf x} {\bf y}} $ and the identity (\ref{ineq}). Integrating the formula just obtained with regard to the intermediate variables ${\bf x}^{(1)},\ldots,{\bf x}^{(n-1)}, 
{\bf y}^{(1)},\ldots,{\bf y}^{(n-1)}$ in the double cone ${\mathcal C}^{(n)}_{{\bf x} {\bf y}} $ gives the probability 
 \be\begin{aligned}
 \BP& \left(
  {{\bf x}^{(n)}\in d{\bf x}^{ } ,~{\bf y}^{(n)}\in d{\bf y}^{ }  }   
   %
  \right)
  =\frac{\rho_n^{ \mbox{\tiny GUE}}({\bf x}\!-\!\beta) 
   \rho_n^{\mbox{\tiny GUE}}( {\bf y}\!+\!\beta)}
 {\BP^{\mbox{\tiny GUE}} \left(\bigcap _1^{\rho}\{y_i^{(i)}\leq  x_{1}^{(\rho-i+1)} 
 \}\right) }
  \frac{  {\mbox{ Vol} ({\mathcal C}^{(n)}_{{\bf x}{\bf y} }) 
 }{ d{\bf x}d{\bf y} } 
 }{{\mbox{ Vol} ({\mathcal C}_{{\bf x} }) \mbox{ Vol} ({\mathcal C}_{ {\bf y} }) }}
 .\end{aligned}
 \label{15'}\ee
 Finally, the conditional probability below is obtained by dividing formula (\ref{15''}) by  (\ref{15'}), yielding
 \be\begin{aligned}
 \BP& \left(
  \bigcap_{1}^{n-1}
  \{{{\bf x}^{(k)}\in d{\bf x}^{(k)} ,~{\bf y}^{(k)}\in d{\bf y}^{(k)}  }  \}\Bigr|~{{\bf x}^{(n)}={\bf x}^{ } ,~{\bf y}^{(n)}=  {\bf y}^{ }  }
  \right)= \frac {    d\mu_ {{\bf x} ,{\bf y}} }{  \mbox{  Vol} ({\mathcal C}^{(n)}_{{\bf x}{\bf y} })  }
  . \end{aligned}
 \label{14'}\ee
This establishes Theorem \ref{Th1.2} and formula (\ref{joint}) of Theorem \ref{Th1.4} for the coupled GUE-model, except for formula (\ref{ineq}). The analogue for the double Aztec diamonds will be shown in section 6.  \qed



 \newpage

\vspace*{-6cm}

\hspace*{-4.6cm}\includegraphics[height=8.5in]{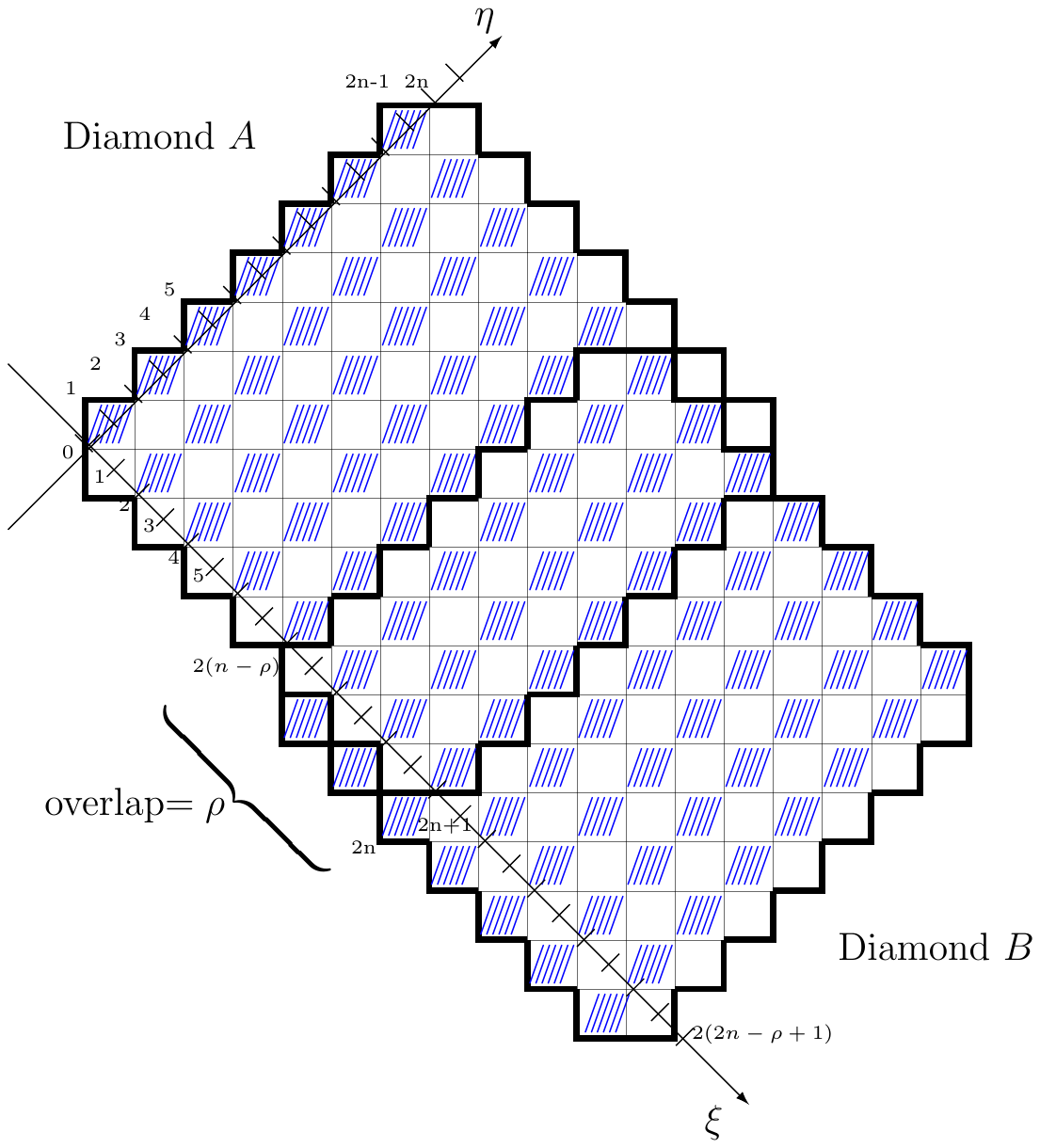}

\vspace*{-26cm}

\hspace*{1.9cm}\includegraphics[height=8.5in]{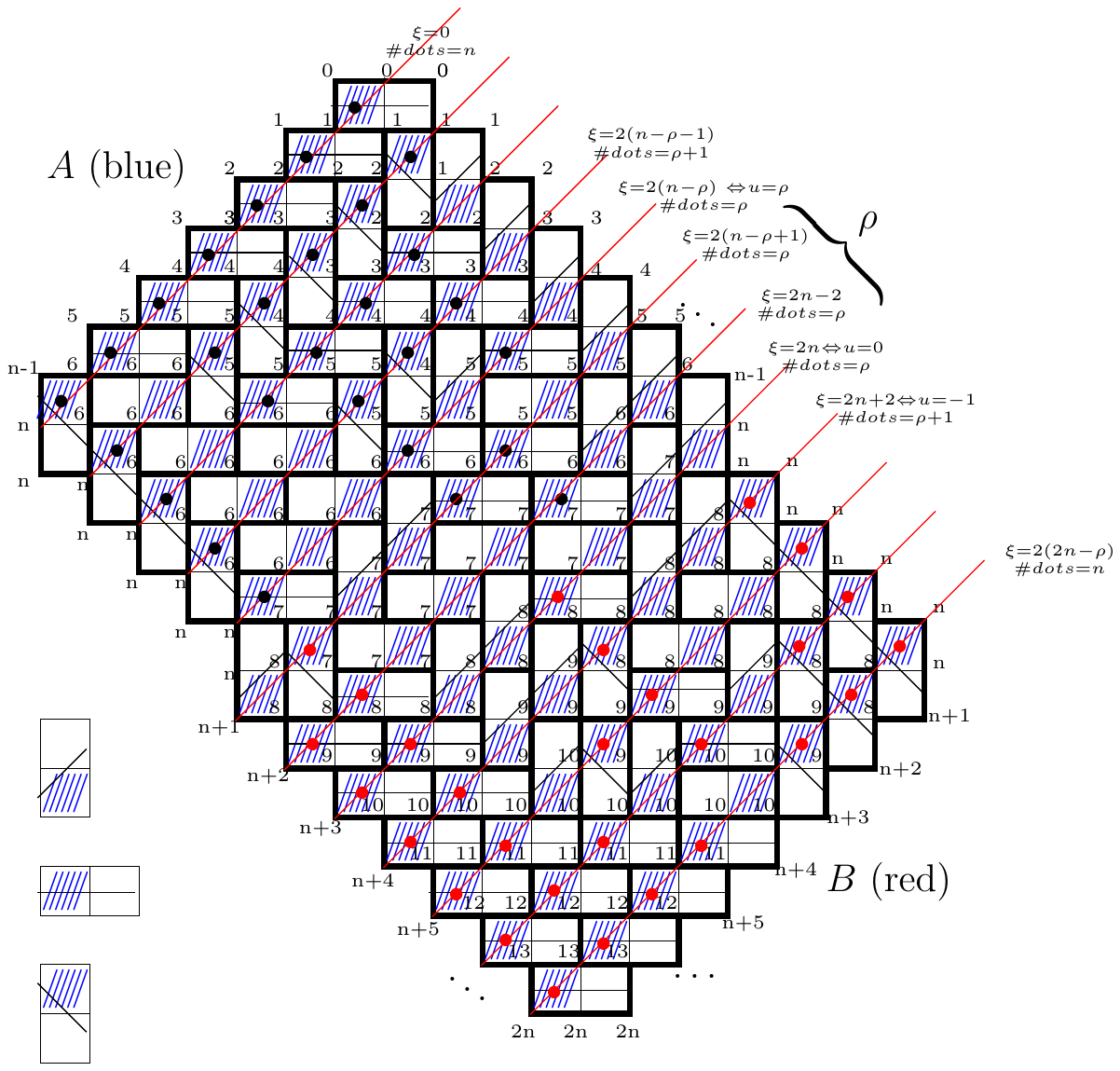}

\vspace*{-7cm}

\hspace*{-3cm}\includegraphics[height=10in]{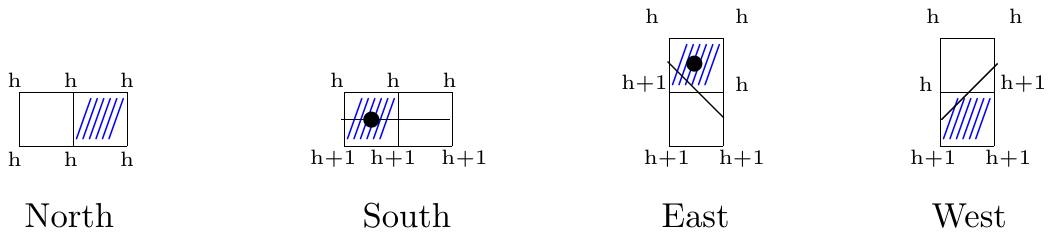}

\vspace*{-19.5cm}

\noindent {\footnotesize Figure 4. The two pictures represent two overlapping Aztec diamonds of size $n=7$ and overlap $\rho=3$, together  with the $(\xi, \eta)$ coordinates. The overlap contains $\rho$ lines (through black squares) $\xi=2s$ for $n-\rho<s\leq n$. To the four domino configurations, we associate four different height functions indicated in the lower part of the figure. The right picture gives a domino tiling, the corresponding heights and its level curves, corresponding to the heights $(\tfrac 12,\ldots,n-\tfrac 12,n+\tfrac 12,\ldots, 2n-\tfrac 12)$. Each time a line $\xi=2i$ intersects level lines of height $(\tfrac 12,\ldots,n-\tfrac 12)$ put a blue ${\mathbb L}$-dot and for the level lines $(n+\tfrac 12,\ldots, 2n-\tfrac 12)$ a red ${\mathbb L}$-dot in the center of the square where it occurs. For individual domino's this corresponds to a dot in two of the four configurations of domino's.}

\section{Double Aztec diamonds, the ${\mathbb L}$- and the Tacnode-Process}

 Consider two overlapping Aztec diamonds $A$ and $B$, of equal sizes $n$ and overlap $\rho$, with opposite orientations; i.e., the upper-left square for diamond $A$ is black and is white for diamond $B$. The size $n$ is the number of squares on the upper-left side and the amount of overlap $\rho$ counts the number of lines of black squares common to both diamonds $A$ and $B$.  Let $\xi,\eta$ be the system of coordinates on the diamonds introduced in \cite{AJvM} and \cite{ACJvM}; see Fig.~4. The even lines $\xi=2k$ for $0\leq k\leq 2n-\rho$  and the odd lines $\eta=2k-1$ for $1\leq k\leq n $ run through black squares. The $\rho$ even lines $\xi=2(n-\rho+1), \ldots, 2n$ belong to the overlap of the two diamonds. The map  between the $\xi$-coordinate here 
%
 %
 %
and the $u$-coordinate introduced just before (\ref{spectra3}) is given by
 \be u=n-\xi/2. \label{map}\ee  Cover this double diamond randomly by dominos, horizontal and vertical ones, as on the right side of  Fig.~4. The position of a domino on the Aztec diamond corresponds to four different patterns, given in Fig.~4 below: North, South,
 East, West.

 
 
 
  
 

 
 Together with this arbitrary domino-tiling of the double Aztec diamond $A\cup B$, one defines a piecewise-linear random surface, by means of a height function $h$ specified by the heights, prescribed on the single dominos according to Fig.~4 below; this height can be taken to be piecewise-linear on each domino. This height function is different from the usual one by Cohn, Kenyon, Propp \cite{CKP}, but related to it by an affine transformation. Let the upper-most edge of the double diamond $A\cup B$ have height $h=0$. Then, regardless of the covering by dominos, the height function along the boundary of the double diamond will always 
be as indicated as on the left-side of Fig.~4, with height $h=2n$ along the lower-most edge of the double diamond. Away from the boundary the height function will depend on the tiling.

 { \bf  \em   The ${\mathbb L}
   $-process} is specified by putting a dot in the middle of the black square when the line $\xi=2s$ in $(\xi,\eta)$-coordinates for $0\leq s\leq 2n-\rho$ intersects a level curve. We call these dots {\em $\mathbb{L}$-particles}. See the right-side of Figure~4 for an example.  More precisely we can put a blue dot when intersecting $A$-level curves and a red dot when intersecting $B$-level curves to distinguish the dots coming from the two Aztec diamonds; see Fig.~4. In other terms, put a dot in the black square each time the random surface goes down one unit along the line $\xi=2s$, in going from left to right. 
We put a probability on the domino tilings, specified by assigning the weight $a>0$ on vertical dominoes and the weight $1$ on horizontal dominoes. This enables us to compute, in principle, the probability of each domino-tiling.
%
%

 Here, we are concerned with the probabilities of the following kinds of events, where $[k,\ell ] $ is an interval of odd integers along the $\eta$-axis 
 \be\begin{aligned}
& \left\{\mbox{The line}~  \{ \xi=2s \}  ~\mbox{has an $\eta$-gap} \supset [k,\ell]
 \right\}
 \\& :=\left\{\mbox{Interval $[k,\ell] \subset \{ \xi\!=\!2s \}$ in $\eta$-coordinates contains no dot-particles}\right \}
\\&=\left\{\mbox{The random surface is flat along the $\eta$-interval   $[k,\ell]
 \subset \{ \xi=2s \}$}\right\}
 \\&=\left\{\mbox{Dominos covering $[k,\ell] \subset \{ \xi\!=\!2s \}$ are pointing to the left of} \right. \\
& \left. \, \mbox{~~~~or above the line $\{ \xi\!=\!2s \}$}\right\}
 \\&=\left\{\mbox{Dominos covering $[k,\ell] \subset \{ \xi\!=\!2s \}$ are West or North in upper Fig.~4}\right\}
\end{aligned}
 \label{events}\ee

  In \cite{ACJvM}, Proposition 1.3, it has been shown that, with regard to the $\eta$-coordinate, 
 the lines $\xi=2i$ contain blue and red ${\mathbb L}$-particles, in varying numbers, satisfying the interlacing patterns of Fig. 5. In the table below $h_{\mbox{\tiny left}}$ and $h_{\mbox{\tiny right}}$ refer to the height of the most-left and most-right point along $\xi=2i$, with the difference of height determining the number of dots on that line; it will be c:
  \be\begin{array}{llllllllll}
 \mbox{lines $\xi=2i$}&\vline   &\mbox{$h_{\mbox{\tiny left}}$}  &\vline&\mbox{$h_{\mbox{\tiny right}}$} &\vline &\Dt h  & \vline& \mbox{\# of blue and red dots} \\
 \hline
 0\leq  i< n-\rho &\vline  &n  & \vline&  i & \vline&n-i&\vline& \mbox{$n-i$ blue dots  }\\
 n-\rho\leq i\leq n &\vline&\rho+ i-\frac 12 &\vline& i+\frac 12 &\vline&   \rho &\vline& \left\{\begin{aligned} &\mbox{$\rho+i-n $ red dots}\\ 
 &\mbox{to the left of}\\&\mbox{$\ n-i$ blue dots}   \end{aligned}\right.  \\
  n < i\leq  2n-\rho &\vline&\rho+i&\vline&n&\vline&   \rho\!+\!i\!-\!n &\vline& \mbox{$\rho+i-n$ red dots  }\\
 \end{array}
 \label{table}\ee
 The blue dots interlace among themselves, as well as the red dots, and this with regard to the $\eta$-coordinate; in the overlap of the two diamonds, 
  we have that the  right-most red dot on the line $\xi=2i+2$ for $n-\rho\leq i\leq n$ is to the left of the left-most blue dot on the line $\xi=2i$. This thus implies an interlacing of the red and blue dots, taken together, in the overlap region and this level by level; the interlacing is always meant in terms of the $\eta$-coordinate. This interlacing is precisely as in (\ref{xy-interlace}) and Fig. 3, with $u $ as in (\ref{map}).


 In \cite{ACJvM}, Proposition 1.1, it was shown that the ${\mathbb L} $-particles on the successive lines $\{\xi=2s \}$ for $1\leq s\leq 2n-\rho$ form a determinantal point process with correlation kernel
\be
\begin{aligned}
{\mathbb L}_{n,\rho }
 (\xi_1,\eta_1;\xi_2,\eta_2)  =&(1+a^2){\mathbb L}^{(0)}_{n }(\xi_1,\eta_1;\xi_2,\eta_2)
\\  
&- (1+a^2)\langle ((I-{K}_n)^{-1}_{_{\geq n-\rho+1}} A_{\xi_1,\eta_1}) (k), B_{\xi_2,\eta_2}(k) \rangle_{_{\geq n-\rho+1 }}.
\end{aligned}
\label{Lkernel}
\ee
given by a perturbation of      
  a kernel ${\mathbb L}^{(0)}_{n }$ by an inner-product\footnote{The ${\mathbb L}^{(0)}_{n }$-kernel is closely related to the one-Aztec diamond kernel. In (\ref{Lkernel}), we use the inner-product $\la f(k), g(k)\ra_{\geq \alpha}=\sum_{\alpha}^{\infty} f(k)g(k) 
$ in $\ell^2[\alpha,\infty]$.} of two functions $A$ and $B$ involving the resolvent of yet another kernel $K_n$; remember $a>0$ is the weight of the vertical dominos. The probability of an event like  (\ref{events}) is given by the Fredholm determinant of the ${\mathbb L}_{n,\rho }$-kernel over the gap $[k,\ell]$. Here and beyond we shall use the map $u=n-\xi/2$ as in (\ref{map}). 

 \begin{proposition}\label{Prop2.1}
 Consider any configuration of $n$ dots ${\bf z}^{(n)}$ and ${\bf z}^{(-\dt)}$ at levels $u=n$ and $u=-\dt$ on the double Aztec diamond $A\cup B$, or as given by Figure 5. Then for $a=1$, all configuration of dots satisfying the interlacing (\ref{xy-interlace})  are equally likely.
\end{proposition}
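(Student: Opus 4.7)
The plan is to exploit the fact that at $a=1$ every domino tiling of the double Aztec diamond $A\cup B$ carries equal weight, and to show that the push-forward of this uniform measure onto dot configurations, conditioned on the dots at the two extreme levels $u=n$ and $u=-\delta$, is uniform on the set of admissible interlacing patterns.

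First I would recall the deterministic rule at the bottom of Figure~4 that, given any tiling, places an $\mathbb{L}$-dot on the line $\xi=2s$ in the center of each black square whose local pattern is North or West, and the fact, established as Proposition~1.3 of \cite{ACJvM} and summarized in table (\ref{table}), that every resulting family $\{{\bf z}^{(u)}\}$ satisfies the interlacing (\ref{xy-interlace}), including the mixed relation $\curlyeqprec$ in the overlap. Conversely, every admissible interlacing configuration is realized by at least one tiling, as can be shown by a direct shuffling-type reconstruction in the two non-overlap wedges, extended through the overlap.

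The central step is to count, for any admissible interior configuration $\{{\bf z}^{(u)}\}_{-\delta<u<n}$ matching the prescribed boundary data at $u=n$ and $u=-\delta$, the number of tilings producing it, and to verify that this count is constant across admissible configurations. I would do this via the Lindstr\"om--Gessel--Viennot correspondence between domino tilings and non-intersecting lattice paths: the $\mathbb{L}$-dots are exactly the path crossings of the vertical lines $\xi=2s$. At $a=1$ every path configuration carries weight $1$, so the number of tilings producing a given $\{{\bf z}^{(u)}\}$ factors as a product, over the thin strips between consecutive levels, of local counts that depend only on the cardinalities and on the interlacing, not on the actual positions of the dots. Multiplying these local counts yields the same value for every admissible $\{{\bf z}^{(u)}\}$, which is precisely uniformity.

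The main obstacle is the overlap region. On each of the $\rho$ lines $\xi=2(n-\rho+1),\ldots,2n$, blue and red $\mathbb{L}$-dots coexist and interlace through the mixed relation $\curlyeqprec$, and the LGV paths coming from the $A$- and $B$-families share the same strips. One must verify that, at $a=1$, local swaps between admissible configurations still induce bijections on the set of compatible tilings, so that the LGV count in each overlap strip remains independent of the precise dot positions. This can be checked either by exhibiting the explicit local moves in the overlap, or by comparing with the joint density derived from the ${\mathbb L}_{n,\rho}$-kernel (\ref{Lkernel}) of \cite{ACJvM} and reading off, at $a=1$, its constancy on admissible configurations. Once this local invariance is established, Proposition~\ref{Prop2.1} follows by concatenating the constant strip counts over levels $-\delta,\ldots,n$, parallel to the single-diamond argument of \cite{JoNo,OR}.
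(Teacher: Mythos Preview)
Your high-level strategy---push the uniform measure on tilings forward to dot configurations and show the fibre count is constant---is exactly right, and it is also what the paper does. But the load-bearing claim in your argument, that ``the number of tilings producing a given $\{{\bf z}^{(u)}\}$ factors as a product, over the thin strips between consecutive levels, of local counts that depend only on the cardinalities and on the interlacing, not on the actual positions of the dots,'' is asserted rather than proved. LGV by itself does not give you this: the determinant counting non-intersecting paths between prescribed endpoints certainly depends on those endpoints in general, so you still owe a combinatorial lemma saying that for these particular strips and this particular step set, the count is position-independent. That lemma \emph{is} the proposition. Your two fallbacks for the overlap do not close the gap either: ``exhibit the explicit local moves'' is a restatement of the problem, and reading off constancy from the kernel $\mathbb{L}_{n,\rho}$ is circular, since the kernel gives correlations, not the conditional density on full configurations, and extracting the latter is at least as hard as what you are trying to show.

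The paper's proof supplies exactly the missing combinatorial lemma, and it does so uniformly across the overlap and non-overlap regions without needing a separate LGV argument. It works with the height function and its level curves rather than LGV paths: given the dots on two adjacent lines $\xi=2i-2$ and $\xi=2i$, each pair of dots $A$ (on $\xi=2i-2$) and $C$ (on $\xi=2i$) of the same height must be joined by a segment of level curve, and a short case analysis (three cases according to whether $\eta(C)$ is strictly between, or coincides with, the neighbouring dots on $\xi=2i-2$) shows there are always exactly two domino coverings realizing that segment. Since level curves determine the tiling uniquely, the total number of tilings over a fixed dot configuration is $2^{\#\{\text{segments}\}}$, and the number of segments depends only on the interlacing pattern, not on positions. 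This is the position-independence you need, obtained by a direct local argument that treats the overlap strips no differently from the rest.
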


\begin{proof}    
 Consider any configuration of dots on the double Aztec diamond satisfying the interlacing, with horizontal and vertical dominos being equally likely. Then consider the line  $\xi=2i-2$ with the $k$th and $k+1$st dot, counted from the right boundary, denoted $B$ and $A$ of respective heights $ h=i+k-3/2$ and $h=i+k-1/2$.

\newpage

 \vspace*{-7cm}    
    \hspace*{-4.3cm}    
 { \includegraphics[width=200mm,height=300mm]{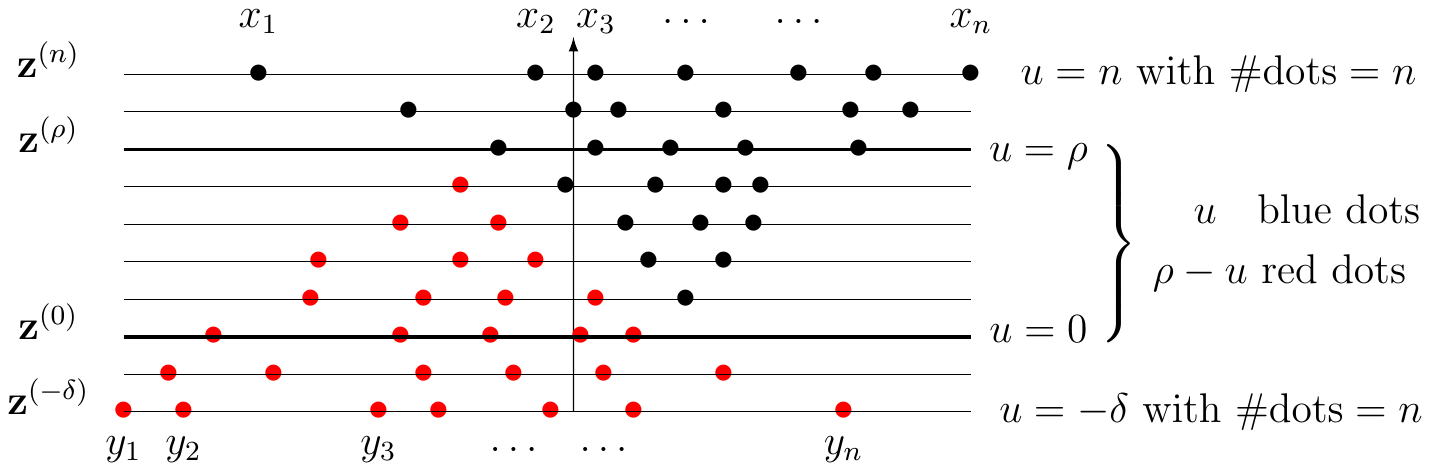}}

 \vspace*{-19cm}    
{\footnotesize Figure 5. The interlacing set of red and blue dots $(\in \BZ)$, subjected to the constraint (\ref{constraint}), with $u=n-\xi/2 $ in terms of the coordinate $\xi$ of Figure 4. Here $\dt$ is such that $n=\rho+\dt$.}

\vspace*{.5cm}

%
\noindent Then by (\ref{table}) the $k$th dot on the line $\xi=2i$, denoted $C$, must have height $h=i+k-1/2$  and must interlace $A$ and $B$; that is $\eta(A)\leq\eta(C)\leq \eta(B)$; see the arguments in the proof of Lemma 4.2 in \cite{ACJvM}. We show there are exactly two ways of connecting the dots $A$ and $C$ (of same height $i+k-1/2$). Note that $B$ and $C$ can never be connected, because they have different heights. 
We distinguish three cases:
 
  \medbreak
   
   (i) $\eta(A)<\eta(C)< \eta(B)$.    
   Having two consecutive black squares on a line $\eta=2j+1$, the upper one {\em without} a dot and the lower one {\em with} a dot, as the top-left side of Figure 7, one checks that these two squares can only be covered by dominos in exactly four different ways, given by Figure 6. This leads to a level line going through the black square with the dot and ending up in the middle of the left side of some black square. From there on, one is facing two consecutive black squares having no dots. That implies a configuration as in Figure 6, until one reaches the vicinity of point $A$. There one has two possibilities as indicated on the top-figures of Figure 7. This shows the two different ways of connecting these two points of same height.

   (ii) $\eta(A)=\eta(C)< \eta(B)$. Also here we have exactly  two ways to connect $C$ and $A$, as given by the lower two figures in Figure 7.

     (iii) $\eta(A)<\eta(C)= \eta(B)$. This case also leads to two different paths, very similar to the ones in case (i).
     
       So, given a fixed configuration of dots at levels $u=-\dt$ and $u=n$,  any configuration of acceptable interlacing dots on the intermediate levels $-\dt<u<n$ leads to the same number of level curves, connecting dots of the same height; this was shown above level by level. But level curves determine the domino tiling in a unique way. Since all domino tilings are equally likely ($a=1$), this implies that all configurations of dots, satisfying the interlacing, are equally likely.   \end{proof}

  \newpage

\vspace*{-5cm}

 \hspace*{-1cm}\includegraphics[width=180mm,height=225mm]{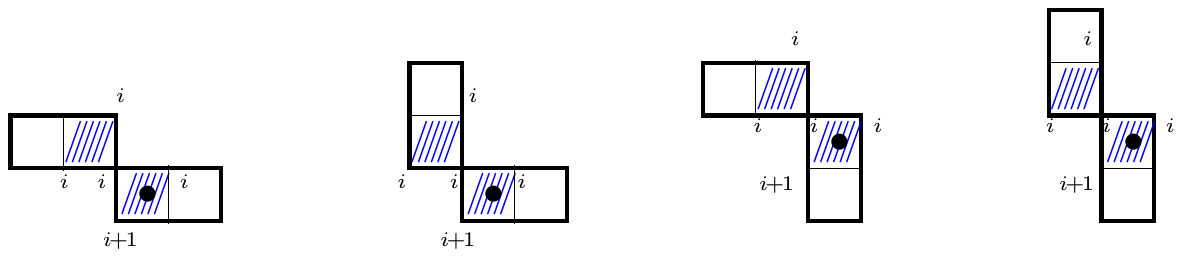}
 
  \vspace*{-17cm}

  {\footnotesize Figure 6. If two consecutive black squares on a line $\eta=2j+1$ are such that the upper one is {\em without} a dot and the lower one is {\em with} a dot, then these are the only possible domino covers.}



\vspace*{-3cm}

 \hspace*{-1cm}\includegraphics[width=170mm,height=215mm]{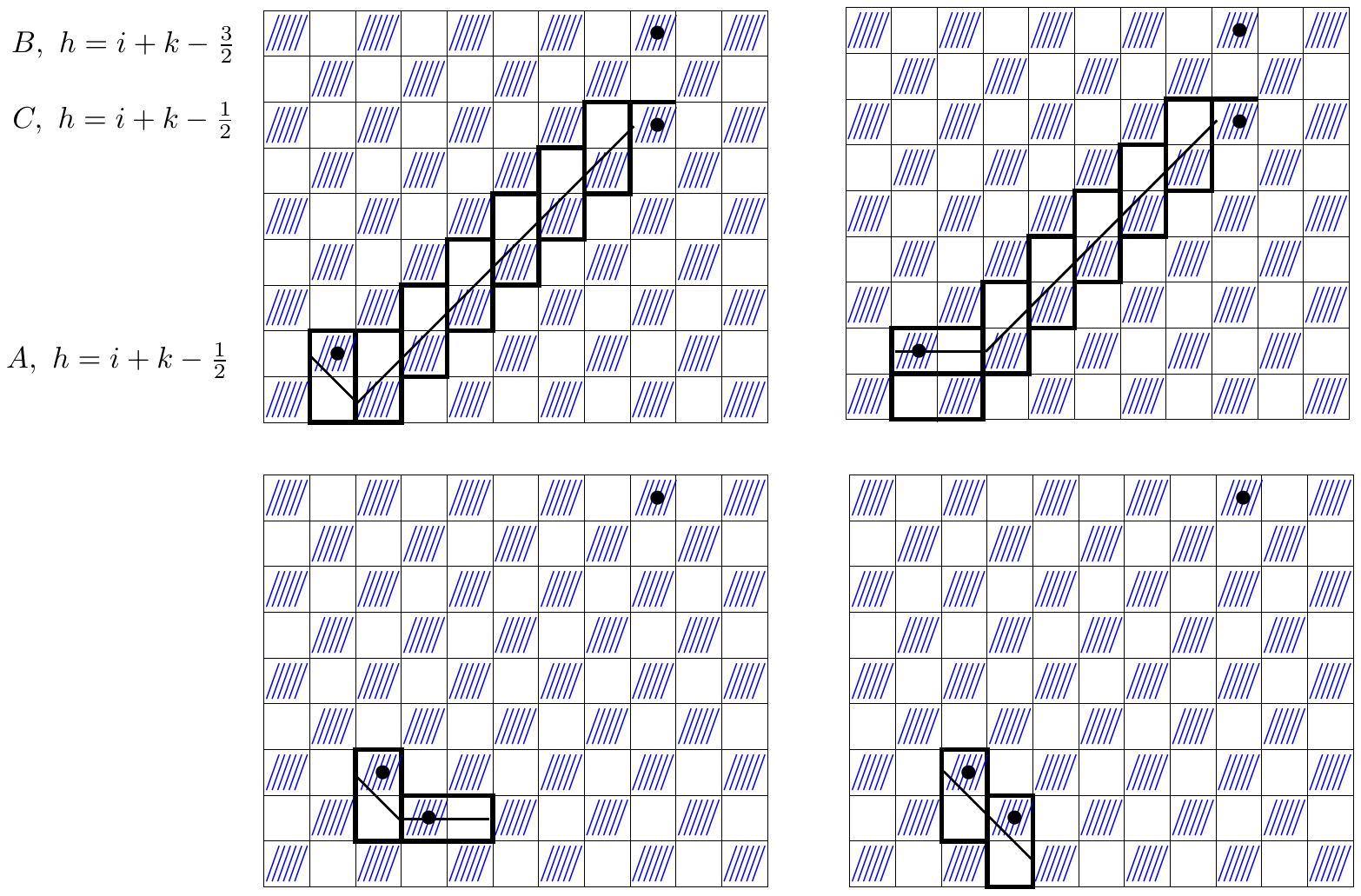}

 \vspace*{-10cm}
 
   {\footnotesize Figure 7. Level curves connecting dots $A$ and $C$ of same height.}
 
    \vspace*{.5cm}
  

   %

      \noindent We now briefly state a result, the first part of which was established in \cite{ACJvM}:
\begin{theorem} \label{a=1}Let the  size $n=2t+\epsilon$, $\epsilon\in\{0,1\}$, 
of the diamonds go to infinity, while keeping the overlap $\rho $ finite and, and letting 
the weight of the vertical domino's $a\to 1$ as  
$$
a= 1-\frac {  \beta}{\sqrt{n/2}} , ~~\mbox{with $\beta\geq 0$ fixed.}
$$
The 
 coordinates $(\xi,\eta)$ are scaled as follows,
\be \begin{aligned}
\xi_i = 4t+2\epsilon-2 u_i &,& \eta_i =2t  +   2[y_i \sqrt{t }]-1 ,~~\mbox{with}~ u_i \in \mathbb{Z} \ , y_i \in \BR.
\end{aligned}
\label{scK}\ee
With this scaling, the limit of the $\mathbb{L}$-kernel (\ref{Lkernel}) is given by the $\BK_{\beta, \rho}^  {\mbox{\tiny TAC}}$-kernel (\ref{dGUE}) :
\be
\begin{aligned}
 \lim_{n \to\infty}  ~(-a)^{(\eta_1-\eta_2)/2}(-\sqrt{t})^{(\xi_1-\xi_2)/2} &
   {\mathbb L} _{n,\rho}
    (\xi_1,\eta_1;\xi_2,\eta_2) \sqrt{t}
 =\BK_{\beta, \rho}^  {\mbox{\tiny TAC}} (u_1,y_1;u_2,y_2)
 ,\end{aligned}
 \label{limL}\ee
 %
 with $\BK_{\beta, \rho}^  {\mbox{\tiny TAC}}$ defined in (\ref{dGUE}); the determinantal process defined by this kernel induces a probability distribution $\BP^{\mbox{\tiny\rm TAC}}$.
In particular, the limit statement holds for $\beta=0$; i.e., for $a=1$. 
Moreover in this scaling limit, the dot-particles $\in \BR$, belonging to the discrete levels $u\in \BZ$, interlace exactly as in Figure 3 or as indicated in Table (\ref{table}); they are uniformly distributed, given ${\bf x}={\bf x}^{(n)}$ and ${\bf y}={\bf y}^{(n)}$; i.e., the probability $\BP^{\mbox{\tiny\rm TAC}}$ induced by ${\mathbb T}^{\mbox{\tiny\rm TAC}}$ equals (see definition (\ref{unifmeas}) for $d\mu_ {{\bf x} {\bf y}}$)
 \be\begin{aligned}
 \BP^{\mbox{\tiny\rm TAC}}& \left(
  \bigcap_{1}^{n-1}
  \{{{\bf x}^{(k)}\!\! \in d{\bf x}^{(k)} ,~{\bf y}^{(k)}\!\!\in d{\bf y}^{(k)}  }  \}\Bigr|
 \begin{array}{c} \mbox{given ${\bf x}={\bf x}^{(n)}$ and ${\bf y}={\bf y}^{(n)}$}
   %
\end{array}\!\!\! \right)
     = \frac{d\mu_ {{\bf x} {\bf y}}}{ \mbox{Vol}~({\mathcal C} _{{\bf x} {\bf y}})} 
  . \end{aligned}
\label{uniform} \ee
\end{theorem}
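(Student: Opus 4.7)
The kernel convergence (\ref{limL}) is already the content of \cite{ACJvM}, as stated, so the plan is to establish only the two remaining claims: the interlacing (\ref{xy-interlace}) and the uniform-distribution statement (\ref{uniform}). The key tool is Proposition~\ref{Prop2.1}, which is the discrete analogue of (\ref{uniform}) at $a=1$. For the interlacing, I will use Table~(\ref{table}) together with the discrete interlacing of blue and red $\mathbb{L}$-particles on adjacent lines $\xi=2i$ and $\xi=2(i-1)$ (summarised in Figure~5 and in the paragraph preceding the theorem). Because the rescaling $\eta = 2t + 2[y\sqrt{t}]-1$ is monotone in $y$, strict discrete interlacing $\eta_k + 2 \leq \eta_{k+1}'$ between adjacent levels becomes weak interlacing $y_k \leq y_{k+1}'$ in the limit. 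The overlap condition ``right-most red dot $\leq$ left-most blue dot'' on each intermediate line then becomes exactly the intertwining $\curlyeqprec$ appearing in (\ref{xy-interlace}).

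To prove (\ref{uniform}) I first handle $\beta = 0$, i.e.\ $a = 1$. Proposition~\ref{Prop2.1} asserts that, conditional on the boundary dot configurations at levels $u = n$ and $u = \rho - n$, every interlacing dot configuration on the $n-1$ intermediate levels is equally likely, with common probability $1/N_n$, where $N_n$ is the number of interlacing continuations. I now pass to the scaling (\ref{scK}). Since $y = (\eta - 2t + 1)/(2\sqrt{t})$, each discrete intermediate coordinate contributes a Riemann-sum factor of $2\sqrt{t}$ converting the counting measure into Lebesgue measure. There are $n(n-1)$ free intermediate coordinates, so the overall power of $\sqrt{t}$ appears in both the numerator (``configurations in a specified Lebesgue cell'') and the denominator ($N_n$) of the discrete conditional probability, and therefore cancels. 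The discrete uniform law converges to $d\mu_{{\bf x},{\bf y}}/\mbox{Vol}(\mathcal{C}^{(n)}_{{\bf x},{\bf y}})$, establishing (\ref{uniform}) for $\beta = 0$.

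For general $\beta \geq 0$ one has $a = 1 - \beta/\sqrt{n/2} \to 1$. Any two interlacing continuations of the boundary data, supported in a bounded scaling window, differ in vertical-domino count by $O(1)$, hence in $a$-weight by a factor $a^{O(1)} = 1 - O(\beta/\sqrt{n}) \to 1$. This extends the conditional uniformity from $\beta = 0$ to arbitrary $\beta \geq 0$. The main obstacle is making this last step rigorous, namely showing that the $O(1)$-bound on excess vertical dominoes holds uniformly over the relevant pairs of continuations in the scaling window. A cleaner alternative, once the equivalence of the two models in section~8 is in place, is to bypass this step entirely and deduce (\ref{uniform}) for all $\beta \geq 0$ directly from the coupled-GUE statement (\ref{14}) in Theorem~\ref{Th1.2}, which has already been proved in section~2.
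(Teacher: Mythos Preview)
Your approach is essentially the paper's: cite \cite{ACJvM} for the kernel limit and invoke Proposition~\ref{Prop2.1} to pass from discrete uniformity to (\ref{uniform}). Your write-up is in fact considerably more detailed than the paper's two-sentence proof, which simply says ``Proposition~\ref{Prop2.1} states that all configurations are equally likely \ldots\ so that in the limit the dot-particles are uniformly distributed.'' Your explicit discussion of the Riemann-sum passage and of the interlacing surviving the monotone rescaling fills in steps the paper leaves implicit.

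One caution: your proposed ``cleaner alternative'' at the end---deducing (\ref{uniform}) for general $\beta$ from (\ref{14}) via the equivalence in section~8---is circular. Section~8 \emph{uses} (\ref{uniform}) (relabelled there as (\ref{uniform8})) as one of the two ingredients needed to match $\BP^{\mbox{\tiny TAC}}$ with the coupled-GUE probability (\ref{15''}) and thereby identify the normalising constants. So you cannot appeal to section~8 to get (\ref{uniform}). Your first route---arguing that two interlacing continuations in the scaling window differ in vertical-domino count by $O(1)$, so the $a$-weight ratio is $1+O(n^{-1/2})$---is the right idea, and the paper does not spell this out any more carefully than you do. If you want to tighten it, note that Proposition~\ref{Prop2.1}'s proof shows each level contributes exactly two tilings per interlacing pattern, and the number of intermediate levels between $u=\rho-n$ and $u=n$ is fixed (independent of the diamond size), so the total domino count in the relevant region is bounded independently of $t$; this makes the $O(1)$ bound immediate.
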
 
   
\begin{proof} We refer the reader to \cite{ACJvM} for the scaling limit (\ref{limL}). Concerning the last part, Proposition \ref{Prop2.1} states that all configurations are equally likely, given a fixed set of dots at levels $u=-\dt$ and $u=n$, so that in the limit the dot-particles are uniformly distributed, establishing statement (\ref{uniform}). \end{proof}


The tacnode kernel $\BK_{\beta, \rho}^  {\mbox{\tiny TAC}} $, as in (\ref{dGUE}), depends on two parameters $\beta,\rho$, 
and is a perturbation of the GUE-minor kernel $\BK^ {\mbox{\tiny minor}}  (n,x;n',x')  $,  defined for $n,n'\in \BZ$, rather than $\mathbb N$  (see (\ref{8.6})), with ingredients (see footnote 1 for the circle $\Gamma_0$ and the imaginary line $L$)
  \be\begin{aligned}
{\cal K}^\beta (\lambda,\kappa)&:=\oint_{\Gamma_0}  \frac{d\zeta} {(2\pi \I)^2 }\int_{ L
 } \frac{d\om}{ \om-\zeta   }
   \frac{e^{-2\zeta^2+4 \beta \zeta }} { e^{-2\omega^2+4 \beta \omega }}
\frac{\zeta^{\kappa } }{\omega^{\lambda +1}} 
\\
   {\cal A}^{\beta,y }_{v }(\kappa)&:=  
 ~
  \oint_{\Gamma_0}  \frac{d\zeta}{ (2\pi \I)^2} 
   \!\! \int_{L
    } \frac{d\om}{\zeta\!-\! \om  }
  \frac{e^{-\zeta^2-2  y  \zeta}} 
  {e^{-2\om^2+4 \beta \om}}  
  \frac{\zeta^{-v }}{\om^{\kappa +1}} 
%
+
  \int_{L
   }\frac{d\zeta}{2\pi \I}\frac{e^{\zeta^2-2\zeta (y +2\beta) }}
  {\zeta^{v+\kappa +1 }}
 %
\\
{\cal B}^ {\beta,y }_{u}(\lambda)&:= 
  \oint_{\Gamma_0}  \frac{ d\zeta}{ (2\pi \I)^2}   \int_{ L
   } \frac{d\om}{ \zeta\!-\!\om   }
  ~~\frac{e^{-2\zeta^2+4\zeta \beta }} {e^{- \om^2-2\om  y    }} 
  \frac{\zeta^{\lambda  }}{\om^{-u  }}
%
+\oint_{\Gamma_0}\frac{d\omega}{2\pi \I} ~~
 \frac{\omega^{u +\lambda  }}
  {e^{ \omega^2-2 \omega (y +2\beta)}}.\end{aligned}
  \label{defAB}\ee

One easily shows that the perturbation term in (\ref{Lkernel}) is actually a finite sum given by the equality  ${\tiny \stackrel{(*)}{=}}$ in (\ref{2min}). The second form  ${\tiny \stackrel{(**)}{=}}$ 
 is due to the natural involution of the double Aztec diamond, which consists of flipping the double diamond about a $\xi$-axis through the middle and once again flipping about an $\eta$-axis through the middle; this involution is preserved in the limit. So we have:
\be
\begin{aligned}
&\BK_{\beta, \rho}^  {\mbox{\tiny TAC}} (u_1,y_1;u_2,y_2)
 \\
 &\stackrel{(*)}{=}  \BK^ {\mbox{\tiny minor}}  (u_1,\beta-y_1 ;~u_2 ,\beta-y_2 ) \\
&~~~+2\sum_{\lambda=0}^{\max(\rho-1,\rho-1-u_2)}  \bigl((\Id - {\cal K}^\beta ( \lambda-\rho ,\kappa-\rho ))^{-1}  {\cal A}^{\beta,y_1-\beta}_{u_1 }\bigr)(\lambda-\rho)  {\cal B}^{\beta,y_2-\beta}_{u_2 }(\lambda-\rho) 
  _{_{ }}
 \\
 &\stackrel{(**)}{=}  \BK^ {\mbox{\tiny minor}}  (\rho-u_2,\beta+y_2 ;~\rho-u_1 ,\beta+y_1 ) \\
&~~~+2\sum_{\lambda=0}^{\max(\rho-1,u_1-1)}  \bigl((\Id - {\cal K}^\beta ( \lambda-\rho ,\kappa-\rho ))^{-1}  {\cal A}^{\beta,-y_2-\beta}_{\rho-u_2 }\bigr)(\lambda-\rho) 
 {\cal B}^{\beta,-y_1-\beta}_{\rho-u_1 }(\lambda-\rho) 
  _{_{ }}
  . \label{2min}\end{aligned}\ee


\section{Notations, addition formulas and matrix identities }

\subsection{Basic notations} Define the standard Hermite polynomials $H_n$, related polynomials $\tilde H_n$ and $P_n$,  
 \footnote{See e.g. \cite{JoNo}, formula (6.21) for the first identity in (\ref{Phi-Psi}), while the second identity is by integration by parts, with contours $L$ and $\Gamma_0$ as in footnote 1.}:
\be
\begin{aligned}
H_n(x)&:=\frac{n!}{2\pi \I}\oint_{\Gamma_0}e^{-z^2+2xz}\frac {dz}{z^{n+1}}=\frac{2^ne^{x^2}}{\I\sqrt{\pi}}\int_{L}e^{w^2-2xw}w^ndw
=(2x)^n+\ldots\\
\widetilde H_n(x)&:=\frac 1{n!} H_n(x)
\\  
%
P_n(x)&:=  \begin{aligned}&\frac{1}{n!~\I^n}H_n(\I x)=\frac 1{\I^n}\widetilde H_n(\I x)~~~\mbox{for $n\geq 0$} 
\end{aligned} 
\end{aligned}
\label{herm1}\ee
with $H_0=\widetilde H_0=P_0=1$ and
\be
H_n(x)=\tilde H_n(x)=P_n(x)=0
~~~~\mbox{for $n< 0$}.
\label{herm2}\ee
Also define another set of polynomials by a recurrence, setting $Q_0=1$ and $Q_{-1}=0$,
\be
Q_n(\eta)=\left\{\begin{aligned}&\frac 12 (n+1)Q_n(\eta)=\eta Q_{n-1}(\eta)+Q_{n-2}(\eta)~~~\mbox{for $n\geq 0$}\\
&Q_{-k}(\eta)=\frac{1}{2^{k-1}}H_{k-2}(-\eta)~~~\mbox{for $n=-k\leq -2$}.
\end{aligned}\right.\label{herm3}\ee 
and for all $n\in \BZ$, define the functions $\Phi_n$ and $\Psi_n$, which by integration by parts can be written as a linear combination of an error function and a Gaussian:
\be
\begin{aligned}
 \Phi_n (\eta) &:=\frac{1}{2\pi \I} \int_L \frac{e^{v^2+2\eta v}}{v^{n+1}} dv  
 = \left\{\begin{aligned}
  &  \frac{2^n}{\sqrt{\pi} n!} \int^\infty_0 \xi^n e^{-(\xi-\eta)^2} d\xi \quad , n\quad \geq 0
\\ &
   \frac{ e^{-\eta^2}}{\sqrt{\pi}} \frac{H_{-n-1}(-\eta)}{2^{-n} } \quad ,\quad n \leq -1
 \end{aligned}\right. \\ 
  &= F(\eta)P_n(\eta)+F'(\eta)Q_{n-1}(\eta) 
 \\ \\
  \Psi_n(\eta)&:=G(\eta)P_n(\eta)+G'(\eta)Q_{n-1}(\eta),
  \end{aligned}
 \label{Phi-Psi}\ee
  with
 $$
 \begin{aligned}
 G(y) &= \frac{1}{\sqrt{\pi}} \int^\infty_y e^{-\xi^2} d\xi \quad , \quad F(y) = \frac{1}{\sqrt{\pi}} \int^y_{-\infty} e^{-\xi^2} d\xi = 1-G(y).
\end{aligned}
$$
%
Note that
\be
\begin{aligned}
\Phi_n(y)&= G(-y)P_n(y)-G'(-y)Q_{n-1}(y)\\
&=(-1)^n\left(G(-y)P_n(-y)+G'(-y)Q_{n-1}(-y)\right) 
 =(-1)^n \Psi_n(-y),
\end{aligned}
\label{PhiPsi}\ee
and so for $n\leq -1$, we have $P_n(y)=0$ and thus
\be
\Phi_n(y)=-\Psi_n(y)\mbox{   for   }n\leq -1.
\label{PhiPsi-}\ee
Also define functions $g_{y } (\kappa)$ and $h_{y } {(\lb)}$, which have the following expressions in terms of $\Phi_\kappa$ and $\widetilde H_{\lb}$,
\be
\begin{aligned}
  g_{y } (\kappa) &:= \int_{L} \frac{d\om}{2\pi i}~ e^{\om^2-2(\beta-y )\om} \om^{-\kappa-1}=\Phi_\kappa(y- \beta)
  \\
   h_{y } {(\lb)}&:=  \int_{\Gamma_0} \frac{d\zeta}{2\pi i } ~e^{- \zeta^2+2(\beta-y )\zeta} \zeta^{\lb}= \widetilde H_{-\lb-1}(\beta-y)
\end{aligned}\label{gh}\ee

\noindent
Given $\beta\geq 0$, the following quantities will be used throughout
\be
c_k :=  {2}^{k/2}  \tilde H_k(\beta \sqrt{2})\mbox{~~~and~~~}\tilde c_k:= {2}^{k/2}  \Phi_k(-\beta \sqrt{2}).
\label{ck}\ee
Define the square matrix of size $n $:
\be
 \widetilde {\cal H}^{( -x)}_{n}  :=
 \left( \widetilde H_{n-j}( -x_i)\right)_{1\leq i,j\leq n} 
 \mbox{with}~~~\det \widetilde {\cal H}^{( -x)}_{n}  =c'_n \Dt_n(x)\label{HermMatrix}\ee
with $$c'_n=\frac{  (-2)^{n(n-1)/2}}{(n-1)!\ldots 2! 1!}$$

\bigbreak

\subsection{ Addition formulas} The following will be useful later:
\begin{lemma}\label{L2.1}
Given $b,c \in \mathbb{C}$
 \be
\begin{aligned}
\sum_{k+\ell=n} b^k c^\ell \widetilde H_k (u) \widetilde H_\ell (v) = 
\left\{
\begin{array}{llll}
\widetilde H_n (bu+cv) \ \textrm{for} \ b^2+c^2=1
\\
 P_n (bu + cv) \ \textrm{for} \ b^2+c^2=-1
\\
\frac{2^n}{n!}(bu+cv)^n\ \textrm{for} \ b^2+c^2=0\end{array}
\right.
\end{aligned}
\label{L2.1F}\ee
\end{lemma}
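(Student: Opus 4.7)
The plan is to reduce all three identities to a single generating-function identity and then read off the three cases by specialising the constraint $b^2+c^2$. The key input is the exponential generating function
\[
\sum_{n\geq 0} \widetilde H_n(x)\, t^n \;=\; e^{2xt-t^2},
\]
which follows immediately from the contour integral formula for $H_n$ in (\ref{herm1}) by expanding $1/(1-(\text{stuff}))$; equivalently, it is the standard generating function for the Hermite polynomials, rescaled by $n!$.

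First I would form the product of two such generating functions, one in $u$ with parameter $bt$ and one in $v$ with parameter $ct$:
\[
\Bigl(\sum_{k\geq 0} \widetilde H_k(u)(bt)^k\Bigr)\Bigl(\sum_{\ell\geq 0} \widetilde H_\ell(v)(ct)^\ell\Bigr)
\;=\; e^{2ubt-b^2t^2}\, e^{2vct-c^2t^2}
\;=\; e^{2(bu+cv)t-(b^2+c^2)t^2}.
\]
Collecting the coefficient of $t^n$ in the product on the left gives exactly the inner sum $\sum_{k+\ell=n} b^k c^\ell \widetilde H_k(u)\widetilde H_\ell(v)$. Thus the whole problem is to identify the coefficient of $t^n$ in $e^{2(bu+cv)t-(b^2+c^2)t^2}$ under each of the three hypotheses.

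Each case is then immediate. When $b^2+c^2=1$, the exponent matches the generating function above with $x=bu+cv$, so the coefficient is $\widetilde H_n(bu+cv)$. When $b^2+c^2=-1$, the exponent becomes $e^{2(bu+cv)t+t^2}$; substituting $t\mapsto t/\mathrm{i}$ in the Hermite generating function together with $P_n(x)=\widetilde H_n(\mathrm{i} x)/\mathrm{i}^n$ from (\ref{herm1}) shows that
\[
e^{2xt+t^2}\;=\;\sum_{n\geq 0} P_n(x)\, t^n,
\]
giving the coefficient $P_n(bu+cv)$. Finally, when $b^2+c^2=0$, the quadratic term disappears and
\[
e^{2(bu+cv)t}\;=\;\sum_{n\geq 0}\frac{2^n(bu+cv)^n}{n!}\, t^n,
\]
producing the last expression. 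The only mild subtlety—which is not really an obstacle—is that for case two one must verify the generating-function formula for $P_n$ at the level of formal power series rather than convergent series, but this is automatic since both sides are entire in $t$. No step here is genuinely hard; the proof is a one-line manipulation of generating functions once the correct pairing is written down.
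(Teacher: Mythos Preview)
Your proof is correct and is essentially the same argument as the paper's: both multiply two copies of the Hermite generating function $e^{2xt-t^2}=\sum_n \widetilde H_n(x)t^n$ (with parameters $bt$ and $ct$) and then specialise $b^2+c^2$ to $1$, $-1$, $0$. The only cosmetic difference is that for $b^2+c^2=-1$ the paper substitutes $b\to \mathrm{i}b$, $c\to \mathrm{i}c$ in the first identity, whereas you equivalently derive the generating function $\sum_n P_n(x)t^n=e^{2xt+t^2}$ directly from $P_n(x)=\mathrm{i}^{-n}\widetilde H_n(\mathrm{i}x)$.
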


\begin{proof}
The first identity (for $b^2+c^2=1$) depends on the generating function for Hermite polynomials
\[
e^{-z^2+2z x} = \sum^\infty_{n=0} \frac{z^n}{n!} H_n (x) = \sum^\infty_{n=0} z^n \widetilde H_n (x),
\]
and on expanding in $z$ the following identity
\[
z^{-z^2(b^2+c^2)+2z(bu+cv)}    =   e^{-(bz)^2+2bzu}              e^{-(cz)^2+2czv}.
\]
The identity for $b^2+c^2=-1$ is obtained from the first identity by 
replacing $b\to \I b$, $c\to \I c$. The third identity is obtained from the above exponential identity by setting $b^2+c^2=0$.
\end{proof}  

\bigbreak

\begin{corollary}\label{C2.2}
The inverse of the lower-triangular matrix of size $\al$,
 \[
 C_\al: =
 \left(\begin{array}{cccc}
 c_0 &  & O &\\
 c_1 & c_0 &  &\\
 \vdots & \vdots & \ddots &\\
 c_{\al-1} & c_{\al-2} & \ldots  & c_0
 \end{array}\right)
\mbox{with $c_k=2^{k/2}\widetilde H_k(\beta \sqrt{2})$} \]
 is given by
\be
 C_\al ^{-1}=
 \left(\begin{array}{cccc}
 a_0 &  & O &\\
 a_1 & a_0 &  &\\
 \vdots & \vdots & \ddots &\\
 a_{\al-1} & a_{\al-2} & \ldots  & a_0
 \end{array}\right)
 \mbox{with $a_k=2^{k/2}\I^k \widetilde H_k(\I \beta \sqrt{2})$}.\label{Cinv}\ee
 Moreover, for $n\geq 0$,
\be \begin{aligned}
\sum_ {{p+q=n}\atop {p,q\geq 0}} c_{p} P_q (y-\beta) &= \widetilde H_n (y+\beta).
\\
\sum_ {{p+q=n}\atop {p,q\geq 0}} a_{p} \widetilde H_q(-y+ \beta) & =P_n (-y-\beta)
\\
\sum_{{k+\ell=n}\atop {k,\ell\geq 0}} \widetilde H_k(-u) P_\ell(v)&=
\frac {2^n}{n!}(v-u)^n,\end{aligned}\label{Hermid}
\ee
implying the following matrix identity for $x,y\in \BR^n$, (see (\ref{HermMatrix}))
 \be  \begin{aligned}
   \widetilde {\cal H}^{(-x)}_{n}\times  \left(P_{i-1}( y_j)\right) _{1\leq i,j\leq n}
 & =\left(\frac {(2(y_j-x_i))^{n-1}}{(n-1)!} \right)_{1\leq i,j\leq n}\label{HP}.\end{aligned}\ee
  
\end{corollary}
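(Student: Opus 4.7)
The plan is to derive all four claims directly from Lemma \ref{L2.1}, choosing $(b,c)$ with $b^2+c^2\in\{1,-1,0\}$ in each case, after first using $P_n(x)=(-\I)^n\widetilde H_n(\I x)$ from (\ref{herm1}) to convert $P_n$'s into $\widetilde H_n$'s. Note that the Toeplitz entries are already presented in the Lemma's format, namely $c_k=(\sqrt 2)^k\widetilde H_k(\beta\sqrt 2)$ and $a_k=(\I\sqrt 2)^k\widetilde H_k(\I\beta\sqrt 2)$.

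For the matrix inverse (\ref{Cinv}), both $C_\al$ and the proposed inverse are lower-triangular Toeplitz, so the claim reduces to showing $\sum_{p+q=n}c_p a_q=\delta_{n,0}$ for $n\geq 0$. This sum reads $\sum b^p c^q \widetilde H_p(u)\widetilde H_q(v)$ with $b=\sqrt 2,\ c=\I\sqrt 2,\ u=\beta\sqrt 2,\ v=\I\beta\sqrt 2$, giving $b^2+c^2=0$ and $bu+cv=2\beta-2\beta=0$; the third case of Lemma \ref{L2.1} then delivers $\frac{2^n}{n!}\cdot 0^n=\delta_{n,0}$. The three identities in (\ref{Hermid}) follow analogously: for the first, using $P_q(y-\beta)=(-\I)^q\widetilde H_q(\I(y-\beta))$ one picks $b=\sqrt 2,\ c=-\I$ (so $b^2+c^2=1$) with $u=\beta\sqrt 2,\ v=\I(y-\beta)$, yielding $bu+cv=2\beta+(y-\beta)=y+\beta$; for the second, $b=\I\sqrt 2,\ c=1$ (so $b^2+c^2=-1$) with $u=\I\beta\sqrt 2,\ v=\beta-y$ gives $bu+cv=-2\beta+\beta-y=-y-\beta$; for the third, converting $P_\ell(v)=(-\I)^\ell\widetilde H_\ell(\I v)$ and taking $b=1,\ c=-\I$ (so $b^2+c^2=0$) with arguments $-u,\ \I v$ gives $bu+cv=v-u$.

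The final matrix identity (\ref{HP}) is then a straightforward corollary of the third addition formula: its $(i,j)$-entry is $\sum_{k=1}^n\widetilde H_{n-k}(-x_i)P_{k-1}(y_j)$, which under the reindexing $m=n-k,\ \ell=k-1$ becomes $\sum_{m+\ell=n-1}\widetilde H_m(-x_i)P_\ell(y_j)=\frac{(2(y_j-x_i))^{n-1}}{(n-1)!}$. There is no genuine obstacle here; the whole corollary is a bookkeeping exercise on top of Lemma \ref{L2.1}, and the only place a slip is likely to occur is in tracking the powers of $\I$ absorbed when converting between $\widetilde H_n$ and $P_n$ and when checking that $bu+cv$ produces exactly the desired argument in each of the four cases.
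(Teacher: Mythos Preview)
Your proof is correct and follows essentially the same route as the paper's own proof: each of the four claims is obtained by specializing the parameters $(b,c,u,v)$ in Lemma~\ref{L2.1} so that $b^2+c^2$ lands in $\{1,-1,0\}$ and $bu+cv$ produces the desired argument, with the $P_n\leftrightarrow\widetilde H_n$ conversion from (\ref{herm1}) handling the factors of $\I$. Your parameter choices match the paper's (up to the harmless symmetry of swapping the roles of $(b,u)$ and $(c,v)$), and your derivation of (\ref{HP}) from the third identity of (\ref{Hermid}) is exactly what the paper intends by ``implying''.
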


\begin{proof} Note (\ref{Cinv}), or what is the same $\sum_{\ell+k=n} a_kc_\ell=\dt_{n,0}$, follows from (\ref{L2.1F})
 with $  \I c=  b=\I v=u=\I \sqrt{2}\beta $.
Setting $b=-i$, $c=\sqrt{2}$ and $u=\I(y-\beta) ,~~v=\beta$ in (\ref{L2.1F}) establishes the first identity (\ref{Hermid}) and setting $b= \I\sqrt{2}$, $c=1$ and $u=\I\beta \sqrt{2},~~v=-y+\beta$ in (\ref{L2.1F}) the second identity.  Setting $b= 1,~c=-i$ and $u\mapsto -u,~~v\to  \I v$ in (\ref{L2.1F}) proves the last identity.
\end{proof}

Define the matrix $C^{\circlearrowleft}_{\al}$ as the matrix $C_\al$, but written upside down,
\be
C^{\circlearrowleft}_{\al}=\left(\begin{array}{ccccccc}
c_{\al-1}&c_{\al-2}&\ldots&c_1&c_0\\
c_{\al-2}&c_{\al-3}&\ldots&c_0& \\
\vdots &\vdots & \\ \\ &&&O\\
c_0&
\end{array}\right)~~\mbox{and}~~
C^{\circlearrowleft~-1}_{\al}=\left(\begin{array}{ccccccc}
 & & & &a_0\\
 & & &a_0&a_1 \\
    &  O & &&
  \\  \\ &&&&\vdots \\
a_0&a_1&\ldots&&a_{\al-1}
\end{array}\right)
\label{Cupside}\ee
Then we have,

\begin{corollary}\label{C2.3}

$$ C^{\circlearrowleft~-1}_{\al} 
(\widetilde H_{\al-1}( y+ \beta),\ldots,\widetilde H_0( y+ \beta))^\top
=(P_0(y-\beta),\ldots, P_{\al-1}(y-\beta))^{\top}$$
\end{corollary}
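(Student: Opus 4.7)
The plan is to deduce the corollary from the first identity of (\ref{Hermid}) in Corollary \ref{C2.2} by rewriting it as a matrix equation and then exploiting the elementary relation between $C_\al$ and its upside-down version $C^{\circlearrowleft}_\al$ via a reversal matrix.

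First, I would assemble the first identity of (\ref{Hermid}), $\sum_{p+q=n,\,p,q\geq 0} c_p\,P_q(y-\beta) = \widetilde H_n(y+\beta)$, for $n=0,1,\dots,\al-1$, into the single matrix equation
$$
C_\al \bigl(P_0(y-\beta),\,P_1(y-\beta),\,\ldots,\,P_{\al-1}(y-\beta)\bigr)^\top = \bigl(\widetilde H_0(y+\beta),\,\widetilde H_1(y+\beta),\,\ldots,\,\widetilde H_{\al-1}(y+\beta)\bigr)^\top,
$$
which is a direct reading of the lower-triangular structure of $C_\al$: the $n$-th row is $(c_n,c_{n-1},\ldots,c_0,0,\ldots,0)$. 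Since $C_\al$ is invertible (its inverse is displayed in (\ref{Cinv})), this gives
$\bigl(P_0(y-\beta),\ldots,P_{\al-1}(y-\beta)\bigr)^\top = C_\al^{-1}\bigl(\widetilde H_0(y+\beta),\ldots,\widetilde H_{\al-1}(y+\beta)\bigr)^\top.$

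Next, I would introduce the $\al\times\al$ reversal matrix $J$ with entries $J_{ij}=\delta_{i+j,\al-1}$, so that $J^2=\Id$ and left (resp. right) multiplication by $J$ reverses the order of the rows (resp. columns). Comparing the displays in (\ref{Cupside}) shows that $C^{\circlearrowleft}_\al = JC_\al$, whence
$$
C^{\circlearrowleft\,-1}_\al = C_\al^{-1} J.
$$
Observing that $J$ applied to the reversed Hermite vector $\bigl(\widetilde H_{\al-1}(y+\beta),\ldots,\widetilde H_0(y+\beta)\bigr)^\top$ restores the unreversed one, the claim follows by the chain
$$
C^{\circlearrowleft\,-1}_\al \bigl(\widetilde H_{\al-1}(y+\beta),\ldots,\widetilde H_0(y+\beta)\bigr)^\top
= C_\al^{-1}\bigl(\widetilde H_0(y+\beta),\ldots,\widetilde H_{\al-1}(y+\beta)\bigr)^\top
= \bigl(P_0(y-\beta),\ldots,P_{\al-1}(y-\beta)\bigr)^\top.
$$

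There is no substantive obstacle here; the only real work is the index bookkeeping needed to verify $C^{\circlearrowleft}_\al=JC_\al$ and to align the row/column reversals with the reversal of the Hermite-polynomial vector. As an independent check, one could instead compute the $i$-th entry of the left-hand side of the stated identity directly: the $i$-th row of $C^{\circlearrowleft\,-1}_\al$ has a single band $(a_0,a_1,\ldots,a_i)$ placed from column $\al-1-i$ to column $\al-1$, which upon pairing with the reversed Hermite vector yields $\sum_{k=0}^{i} a_k\,\widetilde H_{i-k}(y+\beta)$; by the second identity of (\ref{Hermid}) applied after the substitution $y\mapsto -y$, this sum equals $P_i(y-\beta)$, recovering the corollary.
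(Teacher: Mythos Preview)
Your proposal is correct and is precisely the argument the paper has in mind: Corollary~\ref{C2.3} is stated without proof immediately after Corollary~\ref{C2.2}, as a direct consequence of the first identity in (\ref{Hermid}) together with the definition (\ref{Cupside}) of $C^{\circlearrowleft}_\al$, and you have simply written out this implication via the reversal matrix $J$. The alternative entrywise check you give at the end, using the second identity of (\ref{Hermid}) after $y\mapsto -y$, is also correct and would be the most direct route if one starts from the explicit form of $C^{\circlearrowleft\,-1}_\al$ displayed in (\ref{Cupside}).
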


\subsection{Determinants and inner products}
\begin{lemma}\label{LA1}
Given $N$ row vectors $w_\al=(w_{\al,1},\ldots,w_{\al,M}) \in \BR^M$ and $N$ column vectors $v_\al=(v_{1,\al},\ldots,v_{M,\al})^\top \in \BR^M$, with $M\geq N$, the following holds:
$$
\begin{aligned}
\det\left(\la w_\al ,~v_\beta  \ra\right)_{1\leq \al,\beta \leq N}
=\det (WV)=\left\la \Om_w,\Om_v \right\ra \end{aligned}
$$
where
$$
\Om_w:= \sum_1^M w_{1i} e_i\wedge\ldots \wedge \sum_1^M w_{Ni} e_i
,~~
\Om_v:= \sum_1^M v_{i1} e_i\wedge\ldots \wedge \sum_1^M v_{iN} e_i$$
and $W$ and $V$ are two matrices of sizes $N\times M$ and $M\times N$,
$$
W:=\left(\begin{array}{l}w_{1}\\ \vdots  \\ w_{N}\end{array}\right) \mbox{   and   }
V:=\left( v_1 \ldots   v_N   \right).
$$
\end{lemma}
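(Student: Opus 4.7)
The statement has two equalities to verify. For the first, $\det(\langle w_\alpha,v_\beta\rangle) = \det(WV)$, the plan is simply to observe that the $(\alpha,\beta)$ entry of the product $WV$ is
\[
(WV)_{\alpha\beta} \;=\; \sum_{i=1}^{M} w_{\alpha,i}\,v_{i,\beta} \;=\; \langle w_\alpha, v_\beta\rangle,
\]
so the two matrices coincide and their determinants agree. This part is immediate from the definitions.

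For the second equality $\det(WV) = \langle \Omega_w,\Omega_v\rangle$, the plan is to invoke the Cauchy--Binet formula. Since $W$ is $N\times M$, $V$ is $M\times N$, and $M\geq N$, we have
\[
\det(WV) \;=\; \sum_{\substack{I\subset\{1,\dots,M\}\\ |I|=N}} \det(W_I)\,\det(V^I),
\]
where $W_I$ denotes the $N\times N$ submatrix of $W$ obtained by selecting the columns indexed by $I$, and $V^I$ the $N\times N$ submatrix of $V$ obtained by selecting the rows indexed by $I$.

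The next step is to compute $\Omega_w$ and $\Omega_v$ in the canonical orthonormal basis $\{e_{i_1}\wedge\cdots\wedge e_{i_N} : i_1<\cdots<i_N\}$ of $\Lambda^N\BR^M$. Expanding the wedge product
\[
\Omega_w \;=\; \Bigl(\sum_{i_1} w_{1,i_1}e_{i_1}\Bigr)\wedge\cdots\wedge \Bigl(\sum_{i_N} w_{N,i_N}e_{i_N}\Bigr),
\]
antisymmetry collapses each subset $I=\{i_1<\cdots<i_N\}$-contribution into the determinant $\det(W_I)$; likewise the coefficient of $e_{i_1}\wedge\cdots\wedge e_{i_N}$ in $\Omega_v$ is $\det(V^I)$. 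Orthonormality of the basis of $\Lambda^N \BR^M$ then gives $\langle \Omega_w,\Omega_v\rangle = \sum_I \det(W_I)\det(V^I)$, which matches the Cauchy--Binet expansion above.

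There is no genuine obstacle here: this is the classical identification of $\det(WV)$ with the inner product in the exterior power, and the only care needed is bookkeeping of signs in the wedge expansion so that the coefficients really are the $N\times N$ minors of $W$ and $V$ (as opposed to signed variants). I would write out the expansion of $\Omega_w$ in one or two lines to make the sign check transparent, then conclude by comparing with Cauchy--Binet.
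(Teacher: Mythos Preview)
Your proof is correct and follows exactly the same approach as the paper: the first equality is immediate from the definition of matrix multiplication, and the second is the Cauchy--Binet identity. The paper's own proof is just those two observations stated in two sentences, so you have simply written out the details the paper leaves implicit.
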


\begin{proof} The first identity follows from the definition of matrix multiplication, and the second equality from the Cauchy-Binet
 identity.
\end{proof}
 \begin{corollary}\label{CA1'} If $M=N$, then 
 $$
\begin{aligned}
\det\left(\la w_\al ,~v_\beta  \ra\right)_{1\leq \al,\beta \leq N}
=\left\la \Om_w,\Om_v \right\ra =\det (W)\det(V)\end{aligned}
$$
 
  \end{corollary}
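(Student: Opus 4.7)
The proposal is to deduce Corollary \ref{CA1'} directly from Lemma \ref{LA1}, using only the special feature that $M=N$ makes $W$ and $V$ both square. From Lemma \ref{LA1} we already have the two identities
\[
\det\left(\la w_\al,v_\beta\ra\right)_{1\leq \al,\beta\leq N}=\det(WV)=\la \Om_w,\Om_v\ra,
\]
so the only new content to establish is the third equality $\la \Om_w,\Om_v\ra=\det(W)\det(V)$ (equivalently $\det(WV)=\det(W)\det(V)$) under the hypothesis $M=N$.

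First I would give the linear algebra argument: when $M=N$, the matrices $W$ and $V$ are both of size $N\times N$, so multiplicativity of the determinant on square matrices yields $\det(WV)=\det(W)\det(V)$ immediately. Combined with the Lemma this already closes the chain of equalities.

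For the exterior-algebra route (which is the more informative one, since it tells us why the identification works), I would note that for $M=N$ the top exterior power $\Lambda^N(\BR^N)$ is one-dimensional, spanned by $e_1\wedge\cdots\wedge e_N$. Expanding
\[
\Om_w=\sum_{1}^{M}w_{1i}e_i\wedge\cdots\wedge \sum_{1}^{M}w_{Ni}e_i
\]
and using multilinearity plus antisymmetry of the wedge, only the terms with distinct indices survive, and these assemble into a sum over $S_N$ with the usual signs, so $\Om_w=\det(W)\, e_1\wedge\cdots\wedge e_N$. The same computation gives $\Om_v=\det(V)\, e_1\wedge\cdots\wedge e_N$. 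With the inner product on $\Lambda^N(\BR^N)$ normalized so that $e_1\wedge\cdots\wedge e_N$ has norm one (this is the convention already built into Lemma \ref{LA1}, since the Cauchy--Binet formula matches it), this yields $\la\Om_w,\Om_v\ra=\det(W)\det(V)$.

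There is really no obstacle: the corollary is a direct specialization of Lemma \ref{LA1}, with the only subtle point being that one should flag the normalization of the inner product on $\Lambda^N(\BR^N)$ so that the factor $\det(W)\det(V)$ comes out cleanly without a spurious sign or combinatorial coefficient. Once that is observed, the proof reduces to one line invoking Lemma \ref{LA1} together with the multiplicativity of the determinant on square matrices.
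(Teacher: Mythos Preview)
Your proposal is correct and is exactly the intended argument: the paper states Corollary~\ref{CA1'} without a separate proof, since it follows immediately from Lemma~\ref{LA1} together with the multiplicativity of the determinant for square matrices. Your exterior-algebra explanation is a fine (and slightly more explicit) unpacking of the same fact.
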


 \begin{corollary}\label{CA1''}Given $k+m$ vectors $e_1,\ldots,e_k,e_{k+1},\ldots,e_{k+m}$ and two matrices of sizes $(\ell,m)$ and $(k+m,k+\ell)$ with $m\geq \ell$, 
 \[
 B = (b_{ij})_{1\leq i \leq \ell \atop 1 \leq j \leq m}   \qquad \textrm{and} \qquad 
 V = (v_{ij})_{1\leq i \leq k+m \atop 1 \leq j \leq k+\ell}    
 .\]
 Then for
 \[
 \Om_V = \sum^{k+m}_{i=1} v_{i1} e_i \wedge \ldots \wedge      \sum^{k+m}_{i=1} v_{i,k+\ell} e_i ,
 \]
 we have
 \[
 \langle e_1    \wedge \ldots \wedge       e_k  \wedge \sum^m_{i=1} b_{1i} e_{k+i} \wedge \ldots \wedge     \sum^m_{i=1} b_{\ell,i}   e_{k+i} \ , \ \Om_V \rangle
=  \det     \left(\begin{array}{c} 
 (v_{ij})_{1\leq i \leq k \atop 1 \leq j \leq k+\ell }
 \\[0.5cm]
 \hline 
 B ~(v_{k+i,j})_{1\leq i \leq m \atop 1 \leq j \leq k+\ell}           \end{array}\right).
 \]

 \end{corollary}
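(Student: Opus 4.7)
The plan is to realize Corollary \ref{CA1''} as a direct specialization of Lemma \ref{LA1}, by recognizing the left-hand wedge as $\Omega_W$ for a block-structured matrix $W$, and then reading off the determinant by block multiplication.

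First I would rewrite the $k+\ell$ factors of the first wedge as row vectors in $\BR^{k+m}$: the first $k$ factors $e_1,\ldots,e_k$ are the standard basis rows, and the remaining $\ell$ factors $\sum_{i=1}^m b_{\alpha i}\,e_{k+i}$ are the rows $(0,\ldots,0,b_{\alpha 1},\ldots,b_{\alpha m})$ for $1\leq \alpha\leq \ell$. Stacking them yields the $(k+\ell)\times(k+m)$ matrix
\[
W \;=\; \begin{pmatrix} I_k & 0_{k,m} \\ 0_{\ell,k} & B \end{pmatrix},
\]
so that the first wedge equals $\Omega_W$ in the notation of Lemma \ref{LA1}.

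Next I would apply Lemma \ref{LA1}, which is available because the hypothesis $m\geq \ell$ is exactly the dimension condition $k+m\geq k+\ell$ required there. The lemma immediately gives
\[
\langle e_1\wedge\cdots\wedge e_k\wedge \textstyle\sum_{i} b_{1i}e_{k+i}\wedge\cdots\wedge \sum_i b_{\ell i}e_{k+i},\ \Omega_V\rangle \;=\; \det(WV).
\]

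Finally I would evaluate $WV$ by the obvious $2\times 2$ block multiplication. The top $k$ rows of $W$ act by selecting the first $k$ rows of $V$, producing the block $(v_{ij})_{1\leq i\leq k,\,1\leq j\leq k+\ell}$, while the lower block $(0\ |\ B)$ picks out the last $m$ rows of $V$ and multiplies them on the left by $B$, yielding $B\,(v_{k+i,j})_{1\leq i\leq m,\,1\leq j\leq k+\ell}$. Stacking these two blocks gives exactly the determinant on the right-hand side of the corollary.

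No step presents a genuine obstacle: the result is purely a repackaging of Lemma \ref{LA1} / Corollary \ref{CA1'} applied to a block-diagonal $W$, with the ``hard work'' (Cauchy--Binet) already absorbed in the lemma. The only point worth stating carefully is the dimension count $k+m\geq k+\ell$, which is guaranteed by the assumption $m\geq \ell$.
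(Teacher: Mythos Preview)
Your proof is correct and is precisely the intended argument: the paper states the result as a corollary of Lemma~\ref{LA1} without an explicit proof, and your specialization via the block matrix $W=\begin{pmatrix} I_k & 0\\ 0 & B\end{pmatrix}$ followed by block multiplication of $WV$ is exactly the derivation the paper has in mind.
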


 \begin{corollary}\label{CA1'''} Letting $w_{\alpha}$ and $v_{\beta}\in {\mathbb R}^{2N}$, we set for $1\leq k\leq N$,
 $$\begin{aligned}
  w_k&=(a_k,{\bf 0})\\
  w_{N+k}&=({\bf 0},b_k)
 \end{aligned}
 ~~~~~~~\mbox{with}~~\left\{   \begin{array}{l}
 a_k=(a_{k1},\ldots,a_{kN})\\
 {\bf 0}=(0,\ldots,0) \\
 b_k=(b_{k1},\ldots,b_{kN})
 \end{array}\right\}\in \BR^N
 $$
 $$
 v_k=\left(\begin{array}{c}c_k\\d_k
 \end{array}\right),~~~v_{N+k}=\left(\begin{array}{c}e_k\\f_k
 \end{array}\right)
 \mbox{with}~~\left\{   \begin{array}{c}
 c_k=(c_{1k },\ldots,c_{N,k })^{\top},~~~e_k=(e_{1k },\ldots,e_{N,k })^{\top}\\
 d_k=(d_{1k},\ldots,d_{ N,k})^{\top},~~~f_k=(f_{1k},\ldots,f_{ N,k})^{\top}.
 \end{array}\right.
 $$
 Then
 $$
\begin{aligned}
\det\left(\la w_\al ,~v_\beta  \ra\right)_{1\leq \al,\beta \leq 2N}
=&\det \left(\begin{array}{cc} 
\left(\left\la a_k,c_\ell\right\ra\right)_{1\leq k,\ell\leq N}&\left(\left\la a_k,e_\ell\right\ra\right)_{1\leq k,\ell\leq N}\\
\left(\left\la b_k,d_\ell\right\ra\right)_{1\leq k,\ell\leq N}&\left(\left\la b_k,f_\ell\right\ra\right)_{1\leq k,\ell\leq N}
 \end{array}\right)
\\ \\=&\det((a_{ ki})_{1\leq k,i \leq N}) \det((b_{ ki})_{1\leq k,i \leq N}) \\
&~~~~~~~\times \det \left(\begin{array}{cc} 
(c_{ik})_{1\leq i,k\leq N}&(e_{ik})_{1\leq i,k\leq N}\\
(d_{ik})_{1\leq i,k\leq N}&(f_{ik})_{1\leq i,k\leq N}
 \end{array}\right).   \end{aligned}
$$
\end{corollary}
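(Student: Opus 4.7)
The plan is to reduce both equalities to Corollary \ref{CA1'} (the square case $M=N$ of Cauchy--Binet) applied in ambient dimension $2N$. The essential feature that makes the block form collapse cleanly is that $w_k$ is supported on the first $N$ coordinates while $w_{N+k}$ is supported on the last $N$, so taking inner products with any $v_\beta$ orthogonally slices $v_\beta$ into its top and bottom halves.

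First, I would verify the $2N\times 2N$ matrix of inner products entrywise. Since $w_k=(a_k,\mathbf{0})$ and $w_{N+k}=(\mathbf{0},b_k)$, and since $v_\ell=(c_\ell,d_\ell)^\top$, $v_{N+\ell}=(e_\ell,f_\ell)^\top$, a direct dot-product computation yields $\langle w_k,v_\ell\rangle=\langle a_k,c_\ell\rangle$, $\langle w_k,v_{N+\ell}\rangle=\langle a_k,e_\ell\rangle$, $\langle w_{N+k},v_\ell\rangle=\langle b_k,d_\ell\rangle$ and $\langle w_{N+k},v_{N+\ell}\rangle=\langle b_k,f_\ell\rangle$. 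Placing these four families in the natural $2\times 2$ block pattern indexed by $(\alpha,\beta)\in\{1,\ldots,2N\}^2$ reproduces the first stated block-matrix identity.

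Next I would invoke Corollary \ref{CA1'} with $M=2N$, which gives $\det(\langle w_\alpha,v_\beta\rangle)_{1\leq\alpha,\beta\leq 2N}=\det(W)\det(V)$, where $W$ stacks the rows $w_1,\ldots,w_{2N}$ and $V$ has the columns $v_1,\ldots,v_{2N}$. By the chosen zero-padding, $W$ is block-diagonal, $W=\diag\bigl((a_{ki})_{1\leq k,i\leq N},(b_{ki})_{1\leq k,i\leq N}\bigr)$, whence $\det(W)=\det((a_{ki}))\det((b_{ki}))$. Meanwhile $V$ is precisely the $2\times 2$ block matrix whose four $N\times N$ quadrants are $C=(c_{ik})$, $E=(e_{ik})$, $D=(d_{ik})$, $F=(f_{ik})$. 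Substituting these two determinants into the Cauchy--Binet expression yields the second identity.

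There is no genuine obstacle here: the statement is bookkeeping layered on top of Corollary \ref{CA1'}. The one point that needs care is the transpose convention, since $a_k,b_k$ were introduced as row vectors while $c_k,d_k,e_k,f_k$ were introduced as columns; one must confirm that $a_k$ really is the $k$-th row of the matrix $(a_{ki})$ on the right-hand side, and that $c_k,d_k,e_k,f_k$ are the columns of the corresponding quadrants, so that the block-diagonal form of $W$ and the $2\times 2$ block form of $V$ line up with the conventions displayed in the statement.
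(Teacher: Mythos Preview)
Your proposal is correct and is precisely the argument the paper intends: the corollary is stated immediately after Corollary~\ref{CA1'} with no separate proof, and the implicit reasoning is exactly what you wrote---compute the inner products blockwise to get the first equality, then apply the square case of Cauchy--Binet from Corollary~\ref{CA1'} in dimension $2N$, using that $W$ is block-diagonal (giving $\det W=\det(a_{ki})\det(b_{ki})$) while $V$ is the displayed $2\times 2$ block matrix. Your remark about checking the row/column conventions is the only place where care is needed, and you have it right.
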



\subsection{Expressions for the functions ${\mathcal A},~{\mathcal B}$ and ${\mathcal K}$}

In this section, we express ${\cal A}$, ${\cal B}$ and ${\cal K}$, of  (\ref{defAB}), in terms of the basic functions (\ref{herm1}),(\ref{herm2}),(\ref{herm3}),(\ref{Phi-Psi}) and (\ref{ck}) of previous sections, as follows:

\begin{proposition}\label{PAB}

For $0\leq \kappa\leq \max (\rho-u-1,\rho-1)$, one has:
$$\begin{aligned}  {\cal A}^{\beta,y-\beta }_{u }(\kappa-\rho)
&= \Phi_{ \kappa-\rho+u}(-y-\beta)
-\Id_{u\geq 1}
\sum_{\al=0}^{u-1}\tilde c_{\kappa-\rho+u-\al}  \tilde H_{ \al}(\beta-y).
 \\
 {\mathcal B}^{\beta,y-\beta}_u  (\kappa-\rho) 
 &=
  \Id_{\kappa\leq \rho-1} \sum^{\rho-\kappa }_{\al=1} c_{\rho-\kappa-\al} \Psi_{\al-u-1} (y-\beta) 
  + \Id_{-u\geq  1}\sum^{ -u-1}_{\al=0} c_{\rho-\kappa-u-1-\al} P_{\al } (y-\beta)   
   \end{aligned}
$$
 In particular, for $u\leq 0$  and $0\leq \kappa \leq \rho-u-1$, one has
$$\begin{aligned}  {\cal A}^{\beta,y-\beta }_{u }(\kappa-\rho)& =  
 \frac{e^{-(y+\beta)^2}}{\sqrt{\pi}}
  \frac{H_{\rho-\kappa-u-1}(y+ \beta)}{2^{\rho-\kappa-u }}
\\
   \end{aligned}
$$
and for $u\leq 0$ and  $\rho\leq \kappa\leq \rho-u-1$, one has
$${\mathcal B}^{\beta,y-\beta}_u  (\kappa-\rho)
 =\tilde H_{\rho-\kappa-u-1}(y+ \beta)
 .$$
Moreover the kernel ${\cal K}^\beta$ has the following form and also the following matrix representation for $\lb, \kappa\geq 0$:
 \be\begin{aligned}
{\cal K}^\beta ( \lambda-\rho, \kappa-\rho)&:=\oint_{\Gamma_0}  \frac{d\zeta} {(2\pi \I)^2 }
\int_{L} \frac{d\om}{ \om-\zeta   }
   \frac{e^{-2\zeta^2+4 \beta \zeta }} { e^{-2\omega^2+4 \beta \omega }}
\frac{\zeta^{ \kappa -\rho} }{\omega^{  \lambda -\rho+1}} 
\\
&=\left\{\begin{aligned}&\sum_{\al=0} ^{\rho-1-\kappa }  c_{\rho-\kappa-\al-1}\tilde c_{ \lambda-\rho+\al+1} 
  \mbox{     for  } 0\leq \kappa\leq \rho-1 
\\   
&0 \mbox{     for  } \kappa\geq \rho
\end{aligned}\right.
\end{aligned}
\label{Kscrip}\ee 
 $$
~~~~~~~~~=
\left(
\begin{array}{cc}
   {\mathcal K}^{(\beta)}  ( \lambda-\rho,\kappa-\rho )\Bigr|_{[0,\rho-1]}   &  O \\ \\
\ast  & O
\end{array}
\right) ,
$$
with Fredholm determinant given explicitly by a $\rho\times \rho$ determinant, as given in (\ref{ineq}). The GUE-minor kernel $\BK^ {\mbox{\tiny minor}}$ has the following expression in terms of the $H_k$'s and the $\Phi_k$'s: (see notation (\ref{gh}))
 \be
 \begin{aligned}
 \frac 12 \BK^ {\mbox{\tiny minor}} & (u_1,y_i;u_2,y_j)
\!\!\! &=\!\! -\Id_{u_1>u_2}   \BH^{ u_1-u_2 } (2(y_j\!-\!y_i)) + \Id_{u_1\geq 1}\sum_{\lb = 0}^{u_1-1}  h_{y_i}(\lb-u_1) g_{y_j}(\lb-u_2) .
 \end{aligned}
 \label{8.6'}\ee


\end{proposition}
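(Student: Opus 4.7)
\noindent\textbf{Proof plan for Proposition \ref{PAB}.}
The overall strategy is to reduce every double contour integral in (\ref{defAB}) and (\ref{Kscrip}) to a finite sum of products of single contour integrals, by exploiting that on the contours $\Gamma_0$ (small circle around $0$) and $L$ (vertical line to the right of $\Gamma_0$), one has $|\zeta|<|\omega|$. This lets us expand
\[
\frac{1}{\omega-\zeta}=\sum_{\al\geq 0}\frac{\zeta^{\al}}{\omega^{\al+1}}\qquad\text{(and similarly }\tfrac{-1}{\zeta-\omega}\text{)},
\]
which decouples the double integrals. Each single integral is then evaluated using (\ref{herm1})--(\ref{Phi-Psi}): the $\zeta$-integrals produce $\widetilde H_n$ (or, after the rescaling $\zeta\mapsto\zeta/\sqrt 2$ that absorbs the factor $e^{-2\zeta^2+4\beta\zeta}$, the coefficient $c_k$ of (\ref{ck})), while the $\omega$-integrals produce $\Phi_n$ (or, after the analogous rescaling, $\widetilde c_k$). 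The implicit truncation in every sum comes from the vanishing (\ref{herm2}): $\widetilde H_n=H_n=P_n=0$ for $n<0$, and the fact that $\oint_{\Gamma_0}\zeta^m\cdot(\text{entire})\,d\zeta=0$ for $m\geq 0$.

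For $\mathcal{A}^{\beta,y-\beta}_u(\kappa-\rho)$, the second integrand in (\ref{defAB}) is recognised immediately from (\ref{Phi-Psi}) as $\Phi_{\kappa-\rho+u}(-y-\beta)$. Applying the geometric-series expansion to the first integrand, the $\zeta$-integral gives $\widetilde H_{u-\alpha-1}(\beta-y)$ and the rescaled $\omega$-integral gives $\widetilde c_{\kappa-\rho+\alpha+1}$; the sum truncates at $\alpha\leq u-1$ by (\ref{herm2}), producing the indicator $\Id_{u\geq 1}$. When $u\leq 0$ the sum is empty and the remaining $\Phi_{\kappa-\rho+u}(-y-\beta)$ has negative index, so the second line of (\ref{Phi-Psi}) yields the Hermite-with-Gaussian form stated.

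For $\mathcal{B}$ (the main technical step), the expansion yields a $\zeta$-integral that is $c_{\rho-\kappa-\alpha-1}$ (nonzero only for $0\leq\alpha\leq\rho-\kappa-1$, which forces the indicator $\Id_{\kappa\leq\rho-1}$, and explains why the second line in the block form of $\mathcal{K}^\beta$ is zero when $\kappa\geq\rho$), and an $\omega$-integral equal to $\Phi_{-u+\alpha}(y-\beta)$; the second, single integral in (\ref{defAB}) contributes $\widetilde H_{\rho-\kappa-u-1}(y+\beta)$ whenever $\rho-\kappa-u-1\geq 0$. To match the claimed form, use $\Phi_n+\Psi_n=P_n$ (a consequence of $F+G=1$ and $F'+G'=0$) to trade $\Phi$'s for $\Psi$'s at the cost of a $P$-sum; then the identity $\sum_{p+q=n}c_pP_q(y-\beta)=\widetilde H_n(y+\beta)$ from Corollary \ref{C2.2} allows the $\widetilde H$-term from the second integral to cancel exactly the ``missing tail'' of the $P$-sum, leaving the truncated remainder $\sum_{\al=0}^{-u-1}c_{\rho-\kappa-u-1-\al}P_\al(y-\beta)$ with the indicator $\Id_{-u\geq 1}$. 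The special case $u\leq 0$, $\rho\leq\kappa\leq\rho-u-1$ is obtained by observing the $\Psi$-sum vanishes and the residual $P$-sum reassembles via Corollary \ref{C2.2} into $\widetilde H_{\rho-\kappa-u-1}(y+\beta)$.

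For $\mathcal{K}^\beta$, the same expansion gives $c_{\rho-\kappa-\alpha-1}\,\widetilde c_{\lambda-\rho+\alpha+1}$, summed over $0\leq\alpha\leq\rho-\kappa-1$; the block triangular picture follows. For the Fredholm determinant, the plan is to multiply the $\rho\times\rho$ block by $C_\rho^{-1}$ on the right:
\[
\bigl[(\Id-\mathcal{K}^\beta)_{[-\rho,-1]}C_\rho^{-1}\bigr]_{\lambda\mu}
= a_{\lambda-\mu}-\sum_{\nu,\beta'}a_\nu c_{\beta'}\,\widetilde c_{\lambda-\mu-\nu-\beta'},
\]
and apply $\sum_{\nu+\beta'=s}a_\nu c_{\beta'}=\delta_{s,0}$ from (\ref{Cinv}) to collapse the inner double sum to $\widetilde c_{\lambda-\mu}$, yielding the displayed Toeplitz matrix; since $\det C_\rho=c_0^\rho=1$ the determinant identity follows. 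Finally, for $\BK^{\text{\tiny minor}}$ in (\ref{8.6'}), the same expansion of $1/(w-z)$ identifies the $z$- and $w$-integrals directly as $h_{y_i}(\lambda-u_1)$ and $g_{y_j}(\lambda-u_2)$ via (\ref{gh}) (remembering that the substitution $x\mapsto\beta-y_i$, $x'\mapsto\beta-y_j$ is built into the $h,g$ normalisation), and the homogeneity $\mathbb H^m(2z)=2^{m-1}\mathbb H^m(z)$ turns the prefactor $2^{u_1-u_2-1}\mathbb H^{u_1-u_2}(y_j-y_i)$ into $\mathbb H^{u_1-u_2}(2(y_j-y_i))$. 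The hardest bookkeeping is unquestionably the $\mathcal{B}$ case, where one must track three indicators ($\Id_{\kappa\leq\rho-1}$, $\Id_{-u\geq 1}$, and $\Id_{\rho-\kappa-u-1\geq 0}$) through the $\Phi\!\leftrightarrow\!\Psi$ trade and the addition formula simultaneously.
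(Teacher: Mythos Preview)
Your plan is correct and tracks the paper's proof closely: geometric expansion of $1/(\omega-\zeta)$, identification of the resulting single integrals via (\ref{herm1})--(\ref{Phi-Psi}) and (\ref{ck}), and multiplication by $C_\rho^{-1}$ for the Fredholm determinant are exactly what the paper does. The one genuine stylistic difference is in the $\mathcal{B}$ computation: the paper first obtains the intermediate form $\widetilde H_{\rho-\lambda-u-1}(y+\beta)-\Id_{\rho-\lambda\geq 1}\sum_\alpha c_{\rho-\lambda-\alpha-1}\Phi_{\alpha-u}(y-\beta)$ and then splits into three cases ($u\leq 0,\lambda\geq\rho$; $u\leq 0,\lambda<\rho$; $u\geq 1,\lambda<\rho$), unpacking $\Phi$ and $\Psi$ via $F,G,P,Q$ separately in each. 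Your route---use $\Phi_n+\Psi_n=P_n$ once and then invoke $\sum_{p+q=n}c_pP_q(y-\beta)=\widetilde H_n(y+\beta)$ from Corollary~\ref{C2.2} to absorb the $\widetilde H$-term---is the same algebra packaged more uniformly, and avoids the case split; it is arguably cleaner, though the paper's version makes the role of each indicator more visible.
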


\proof 
From the definition (\ref{defAB}), and the definition (\ref{ck}) of $\tilde c_k$, one checks
{  
 $$\begin{aligned}  {\cal A}^{\beta,y-\beta }_{u }(\kappa-\rho)& =  
  \int_{L}\frac{d\zeta}{2\pi \I}\frac{e^{\zeta^2-2\zeta (y + \beta) }}
  {\zeta^{u+\kappa-\rho +1 }}
  \\
  &- \Id_{u\geq 1} \sum_{\al=0}^{u-1}\int_L\frac {d\omega}{2\pi i}\frac{ e^{2\omega^2-4\beta \om}}{\om^{\kappa+\al-\rho+2}}\int_{\Gamma_0}\frac {d\zeta}{2\pi i} 
  \frac{e^{-\zeta^2-2  (y-\beta)  \zeta}}{\zeta^{u-\al}}
 \\
 &= \Phi_{ \kappa-\rho+u}(-y-\beta)
\\&-\Id_{u\geq 1}\sum_{\al=0}^{u-1} {(\sqrt{2})^{\kappa-\rho+\al+1}}{ }\Phi_{\kappa-\rho+\al+1}(-\beta\sqrt{2})  \widetilde H_{u-1-\al}(\beta-y)
 \\
 &= \Phi_{ \kappa-\rho+u}(-y-\beta)
-\Id_{u\geq 1}
\sum_{\al=0}^{u-1}\tilde c_{\kappa-\rho+u-\al}  \widetilde H_{ \al}(\beta-y).
 \\
   \end{aligned}
$$}
%
Concerning the expression ${\mathcal B}^{\beta,y-\beta}_u  $, it suffices to show the following:
   \be\begin{aligned}  
&{\mathcal B}^{\beta,y-\beta}_u  (\lambda-\rho)\\
 &=\tilde H_{\rho-\lambda-u-1}(y+ \beta) ~~~\mbox{for}  ~\left\{~\begin{array}{l}  u\leq 0  \\   \lb\geq \rho \end{array} \right.
\\& =
   \sum^{\rho-\lb }_{\al=1} c_{\rho-\lb-\al} \Psi_{\al-u-1} (y-\beta) 
  +\Id_{u\leq -1} \sum^{ -1}_{\al=u} c_{\rho-\lb-\al-1} P_{\al-u} (y-\beta)  ~~~\mbox{for}  ~\left\{~\begin{array}{l}  u\leq 0  \\   \lb<\rho    \end{array} \right.
  \\
  & =  \sum^{\rho-\lb }_{\al=1} c_{\rho-\lb-\al} \Psi_{\al-u-1} (y-\beta) ~
  ~~\mbox{for}  ~\left\{~\begin{array}{l}  u>0  \\   \lb< \rho   \end{array} \right.
  \\&=0~~\mbox{for $u> 0 $ and $\lb\geq \rho$  }.
\end{aligned}\label{Bexpr}\ee
At first, we express 
${\mathcal B}^{\beta,y-\beta}_u  (\lambda-\rho)$ in terms of the $\Phi_k$'s:%
\be\begin{aligned}  
{\mathcal B}^{\beta,y-\beta}_u &(\lambda-\rho) \\
=& \int_{\Gamma_0} \frac{d\omega}{2\pi i} \frac{e^{-\omega^2+2\omega(y+ \beta)}}{\omega^{\rho-\lambda-u}}
-\Id_{\rho-\lb\geq 1} \sum^{\rho-\lb-1}_{\alpha=0} \int_{\Gamma_0} \frac{d\zeta}{2\pi i} \frac{e^{-2\zeta^2+4\zeta \beta}}{\zeta^{\rho-\lambda-\alpha}}  \int_L \frac{d\omega}{2\pi i}\frac{e^{\omega^2+2 (y-\beta)\omega}}{\omega^{\alpha-u+1}}
\\
=& \widetilde H_{\rho-\lambda-u-1}(y+ \beta)
\\
& -\Id_{\rho-\lb\geq 1} \sum^{\rho-\lambda-1}_{\alpha=0}  {(\sqrt{2})^{\rho-\lambda-\alpha-1}} \widetilde   H_{\rho-\lambda-\alpha-1}(\beta\sqrt{2}) \Phi_{\alpha-u}(y-\beta)
\\
 =&\tilde H_{\rho-\lambda-u-1}(y+ \beta)
  - \Id_{\rho-\lb\geq 1}  \sum^{\rho-\lambda-1}_{\alpha=0}  c_{\rho-\lambda-\alpha-1} \Phi_{\alpha-u}(y-\beta).
 \end{aligned}
 \label{B'}\ee
%
 %
 $\bullet$ For $u=-\dt \leq 0$ and $ \rho\leq\lb $, we have
  \be\begin{aligned}  
{\mathcal B}^{\beta,y-\beta}_u &(\lambda-\rho)=\tilde H_{\rho-\lambda-u-1}(y+ \beta). \label{B''} 
\end{aligned}
\ee
 $\bullet$ For $u=-\dt \leq 0$ and $ \rho>\lb $, we have, using (\ref{Hermid}) and (\ref{herm3}), Corollary \ref{C2.2}, 
  $$ \begin{aligned}  
&\hspace*{-1cm}{\mathcal B}^{\beta,y-\beta}_u  (\lambda-\rho) 
 =\tilde H_{\rho-\lambda-u-1}(y+ \beta)
 -    \sum^{\rho-\lambda-1}_{\alpha=0}  c_{\rho-\lambda-\alpha-1} \Phi_{\alpha-u}(y-\beta)
 \\
 &\hspace*{-1cm}=  \sum^{\rho-\lb-1}_{\al=u} c_{\rho-\lb-\al-1} P_{\al-u} (y-\beta)
 -    \sum^{\rho-\lambda-1}_{\alpha=0}  c_{\rho-\lambda-\alpha-1} \Phi_{\alpha-u}(y-\beta)
 \\
   \end{aligned}$$
%
%
%
 \be \begin{aligned}  
 &= \Id_{u\leq -1} \sum^{ -1}_{\al=u} c_{\rho-\lb-\al-1} P_{\al-u} (y-\beta)+
 \sum^{ \rho-\lb-1}_{\al=0} c_{\rho-\lb-\al-1} P_{\al-u} (y-\beta)
\\&~~ -F(y-\beta)
   \sum^{\rho-\lb-1}_{\al=0} c_{\rho-\lb-\al-1} P_{\al-u} (y-\beta)
   -F'(y-\beta) \sum^{\rho-\lb-1}_{\al=0} c_{\rho-\lb-\al-1} Q_{\al-u-1}   (y-\beta) 
 \\
 &= \Id_{u\leq -1} \sum^{ -1}_{\al=u} c_{\rho-\lb-\al-1} P_{\al-u} (y-\beta)
+G(y-\beta)
   \sum^{\rho-\lb-1}_{\al=0} c_{\rho-\lb-\al-1} P_{\al-u} (y-\beta)
  \\&~~ +G'(y-\beta) \sum^{\rho-\lb-1}_{\al=0} c_{\rho-\lb-\al-1} Q_{\al-u-1}   (y-\beta) 
  \\&= \Id_{u\leq -1}\sum^{ -1}_{\al=u} c_{\rho-\lb-\al-1} P_{\al-u} (y-\beta) +
   \sum^{\rho-\lb }_{\al=1} c_{\rho-\lb-\al} \Psi_{\al-u-1} (y-\beta) .\end{aligned}
 \ee
$\bullet$ For $u\geq 1$ and $\rho>\lb$, using again Corollary \ref{C2.2} and $Q_{-1}=0 $ in the first equality,
 \be
 \begin{aligned}
 &{\mathcal B}^{\beta,y-\beta}_u  (\lambda-\rho)\\
 &=  \sum^{\rho-\lb-1}_{\al=u} c_{\rho-\lb-\al-1} P_{\al-u} (y-\beta)-F(y-\beta)
   \sum^{\rho-\lb-1}_{\al=u} c_{\rho-\lb-\al-1} P_{\al-u} (y-\beta)
 \\
 &~~ -F'(y-\beta) \sum^{\rho-\lb-1}_{\al=u} c_{\rho-\lb-\al-1} Q_{\al-u-1}   (y-\beta)
- \sum^{u-1}_{\al=0} c_{\rho-\lb-\al-1} \Phi_{\al-u} (y-\beta)
\\ &   =    \sum^{\rho-\lb-1}_{\al=u} c_{\rho-\lb-\al-1}  \Bigl(G(y-\beta) P_{\al-u}(y-\beta)+G'(y-\beta)   Q_{\al-u-1} (y-\beta)\Bigr)
 \\
 &~~     + \sum^{u-1}_{\al=0} c_{\rho-\lb-\al-1} \Psi_{\al-u} (y-\beta)
  =  \sum^{\rho-\lb }_{\al=1} c_{\rho-\lb-\al} \Psi_{\al-u-1} (y-\beta).\label{B}
\end{aligned}
\ee
This shows the different cases (\ref{Bexpr}), which establishes the formula for ${\mathcal B}^{\beta,y-\beta}_u  (\lambda-\rho)$ stated   in Proposition \ref{PAB}. 

The kernel  ${\mathcal K}^{\beta}$ in (\ref{defAB}) can then be expressed in terms of the expressions  (\ref{herm1}) and (\ref{Phi-Psi}) for $\widetilde H$ and $\Phi$, upon expanding $(1-\zeta /\om)^{-1}$ in the integral,
\be
\begin{aligned}
{\mathcal K}^{\beta}(\lb-\rho,\kappa-\rho)
&=\sum_{\al=0}^{\rho-\kappa-1}
(\sqrt{2})^{\lb-\kappa} \widetilde H_{\rho-\kappa-\al-1}(\beta\sqrt{2}) \Phi_{\lb-\rho+\al+1}(-\beta\sqrt{2} )
\\
&=\Id_{\kappa\leq \rho-1} \sum_{\al=0}^{\rho-\kappa-1} c_{\rho-\kappa-\al-1}\tilde c_{\lambda-\rho +\al+1}.
\end{aligned}
\ee
From this expression for the kernel we also have that 
$$\begin{aligned}\det &\left(\Id-{\mathcal K}^{ \beta}  ( \lambda ,\kappa  )\right)_{[-1,-\rho]}\\
& =\det\left(
\Id-\left(
\begin{array}{cccccccc}
c_0\\
c_1&c_0&&  \mbox{\Large O}\\
\vdots\\
& & \ddots&\ddots\\
c_{\rho-1}&\ldots&&c_1&c_0
\end{array}\right)
\left(
\begin{array}{cccccccc}
\widetilde c_{0}&\widetilde c_{-1}&\ldots&& \widetilde c_{1-\rho}\\
\widetilde c_1&\widetilde c_0&\ldots&& \widetilde c_{2-\rho}   \\
\vdots\\ 
& & \ddots&\ddots&\widetilde c_{-1}\\
\widetilde c_{\rho-1}&\ldots&&\widetilde c_1&\widetilde c_0
\end{array}\right)
\right),
\end{aligned}
 $$
yielding the explicit expression (\ref{ineq}) for the Fredholm determinant, upon using the inverse matrix $C_\rho^{-1}$, as in (\ref{Cinv}), together with the fact that $\det  C_{\rho}=1$, since $c_0=1$.

Finally, upon expanding $(1-z/w)^{-1}$ in the double integral (\ref{8.6}), the GUE-minor kernel can be expressed as:
\be
\begin{aligned}
&\frac 12 \BK^ {\mbox{\tiny minor}}  (u_1,\beta-y_i;u_2,\beta-y_j)  
\\
&
= -\Id_{u_1>u_2}   \BH^{u_1-u_2} (2(y_j\!-\!y_i))+\sum_{\al=0}^{u_1-1} \oint_{\Gamma_0}\frac {dz}{2\pi \I}\frac{e^{-z^2+2z(\beta-y_i)}}
{z^{u_1-\al }}
\int_L  \frac {dw}{2\pi \I} \frac{e^{w^2+2w(y_j-\beta)}}
{w^{\al-u_2+1}}
\\&
=-\Id_{u_1>u_2}  \BH^{u_1-u_2} (2(y_j\!-\!y_i))+\sum_{\al=0}^{u_1-1} \tilde H_{u_1-\al-1}(\beta-y_i)
\Phi_{\al-u_2}(y_j-\beta),
\end{aligned}
\label{8.6''}\ee
yielding (\ref{8.6'}) in the notation (\ref{gh}), thus ending the proof of Proposition \ref{PAB}.\qed


\section{One-level density}

In this section, we give the density of the  particles of the tacnode process, with kernel (\ref{dGUE}), for each of the even levels $\xi\geq 2n-2\rho$; remember the coordinates $\xi $ of  Figure 4. It will be convenient to use a new coordinate $u=n-\xi/2$; see also Figure 5.  
  The following Theorem will confirm the densities (\ref{outover}) and (\ref{Kout}) of Theorem \ref{Th1.4} for the edge-tacnode process, outside and inside the overlap. We will denote all densities and probabilities related to the tacnode process with a superscript ${}^{\mbox{\tiny TAC}}$. See footnote 5 for the constant $c_{\rho,\dt}$.
%
  %
  \begin{theorem}\label{mainTh1}
  %
%
 The density of particles $z^{(u)}\in \BR^{\max(\rho-u,\rho)}$ at level $u\leq \rho$ is given by the product of two determinants, up to a multiplicative constant\footnote{The formula below has the be understood as follows: for $u\geq 0$ the bracketed part of the  first matrix is absent, whereas for $u\leq 0$, the bracketed part of the  second matrix is absent. },
$$
\begin{aligned}
  \BP^{\mbox{\tiny\rm TAC}}&({\bf z}^{(u)}\in d{\bf z})\\
 =&\det \left(\BK_{\beta, \rho}^{\mbox{\tiny\rm TAC}} 
 (u,z_i;u,z_j) \right)_{1\leq i,j\leq \max(\rho-u,\rho)} d{\bf z}
  \\  
=&c_{\rho,\max(-u,0)}~
\\
&\times\det \left(  
 \begin{array}{c}
 y ^0.\Id_{u\leq -1}\\
\vdots\\ \vdots\\
y ^{-u-1}.\Id_{u\leq -1}\\ \\
\Phi_{-u}(\beta-y  )\\
\vdots\\
\Phi_{\rho-u-1}(\beta\!-\!y  )\\
\end{array}
\right)
\times\det \left(  
 \begin{array}{c}
y ^{0}~ \frac {e^{-(y +\beta)^2}}{\sqrt{\pi}}\\
\vdots\\
y ^{\rho-u-1}~ \frac {e^{-(y +\beta)^2}}{\sqrt{\pi}}\\
\Phi_{0}(\beta+y ).\Id_{u\geq 1}\\
\vdots\\ \vdots\\
\Phi_{\max(0,u)-1}(\beta\!+\!y  ).\Id_{u\geq 1}
\\
 \end{array}
\right)_{y=z_1,\ldots,z_{\rho+\max(-u,0)}}\\
\end{aligned}
$$

 \vspace*{-5.0cm}

 \hspace*{.9cm}-u$\left\{\begin{array}{c}\\ \\  \\ \\ \end{array}\right.$


 \vspace*{.5cm}

 \hspace*{10.9cm}$\left.\begin{array}{c}\\   \\ \\ \end{array}\right\}u$

  \vspace*{.9cm}
 
 \noindent This gives in particular the formulas for the densities stated in Theorem \ref{Th1.4}, the density (\ref{outover}) outside the overlap and the one (\ref{Kout}) inside the overlap.

 \end{theorem}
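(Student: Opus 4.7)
The plan is to start from the determinantal identity
\[
\BP^{\mbox{\tiny TAC}}({\bf z}^{(u)}\in d{\bf z})=\det\bigl(\BK_{\beta,\rho}^{\mbox{\tiny TAC}}(u,z_i;u,z_j)\bigr)_{1\leq i,j\leq M}\,d{\bf z},\qquad M=\max(\rho-u,\rho),
\]
and to exploit the finite-rank structure of the kernel established in Proposition \ref{PAB}. Writing the tacnode kernel in the form given by $(\ast)$ in (\ref{2min}), the perturbation term is a sum over a finite range $\lambda\in[0,\max(\rho-1,\rho-1-u)]$, so $\BK_{\beta,\rho}^{\mbox{\tiny TAC}}(u,z_i;u,z_j)$ is a bilinear expression $\sum_\ell \alpha_\ell(z_i)\beta_\ell(z_j)$ plus the $\BK^{\mbox{\tiny minor}}$-term. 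Using the explicit expansion (\ref{8.6'}) for $\BK^{\mbox{\tiny minor}}$ at coincident levels ($u_1=u_2=u$), its summation range is $\lambda\in[0,u-1]$ for $u\geq 1$ and is empty for $u\leq 0$, and the contribution $\Id_{u_1>u_2}\BH^{u_1-u_2}$ also drops. Thus for every $u$ the single-level kernel is a bona fide finite sum of products.

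Once the kernel is written as $\sum_\ell A_\ell(z_i)B_\ell(z_j)$, I would apply the Cauchy–Binet machinery of Corollary \ref{CA1'} (or, when the bilinear sum splits naturally into two blocks, Corollary \ref{CA1'''}) to factor
\[
\det\bigl(\BK_{\beta,\rho}^{\mbox{\tiny TAC}}(u,z_i;u,z_j)\bigr)_{i,j}=\det\bigl(A_\ell(z_i)\bigr)\cdot\det\bigl(B_\ell(z_j)\bigr),
\]
with the constant $\det(\Id-\mathcal K^\beta)_{[-1,-\rho]}^{-1}$ arising from the resolvent factor $(\Id-\mathcal K^\beta)^{-1}$ in the inner product (\ref{2min}); this is exactly the $c_{\rho,\delta}$ in the statement. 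In the regime $u\leq 0$, the first determinant is assembled from the ${\mathcal A}$-entries, which by Proposition \ref{PAB} produce the block of Hermite-times-Gaussian rows $y^\alpha e^{-(y+\beta)^2}/\sqrt{\pi}$ together with the $\Phi_{\kappa-u}(\beta-y)$ rows, while the second determinant collects the ${\mathcal B}$-entries, producing symmetrically the $\Phi_{\kappa}(\beta+y)$ rows (absent for $u\geq 1$). For $0\leq u\leq \rho$ the analogous splitting, combined with the $\BK^{\mbox{\tiny minor}}$ contribution, gives the block structure of the statement.

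To obtain precisely the rows displayed in the theorem, I would apply the matrix identities of Corollary \ref{C2.2}, namely that $C_\rho^{-1}$ converts $c_k$-combinations of $\widetilde H$'s into $P_n$'s and its circular variant $C_\rho^{\circlearrowleft\,-1}$ of Corollary \ref{C2.3} converts $\widetilde H$-rows into $P$-rows; combined with the decomposition $\Phi_n=F\cdot P_n+F'\cdot Q_{n-1}$ from (\ref{Phi-Psi}) and its mate for $\Psi_n$, these row operations convert the $\Psi$- and $P$-entries occurring in Proposition \ref{PAB} back into the cleaner $\Phi$-plus-Gaussian-monomial form displayed in the theorem. The Vandermonde-type identity (\ref{HP}) is the clean case that converts sums of $\widetilde H\cdot P$ into monomial rows. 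The specializations (\ref{outover}) and (\ref{Kout}) of Theorem \ref{Th1.4} are then just the three ranges $u\leq 0$, $0<u<\rho$, $u=\rho$ of the resulting formula.

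The main obstacle will be the bookkeeping in the case analysis: the ${\mathcal B}$-formula of Proposition \ref{PAB} takes four different shapes according to the signs of $u$ and $\rho-\lambda$, and merging them with the (also case-sensitive) ${\mathcal A}$-formula and with the $\BK^{\mbox{\tiny minor}}$ summand requires carefully choosing the block decomposition in Corollary \ref{CA1'''} so that the $(\Id-\mathcal K^\beta)^{-1}$-kernel contracts against the correct ${\mathcal A}$-block. A secondary technical point is verifying that the $\Id_{u\leq -1}$ (resp.\ $\Id_{u\geq 1}$) prefactors of the monomial $y^\alpha$ (resp.\ $\Phi_\alpha(\beta+y)$) rows in the theorem emerge exactly from the indicator functions in Proposition \ref{PAB}; this is where the non-symmetric ranges $\max(\rho-u,\rho)$ for the size of ${\bf z}^{(u)}$ are forced, and it must match the interlacing table (\ref{table}).
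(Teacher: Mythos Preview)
Your strategy is right for the outside-overlap case $u=-\delta\leq 0$: there the kernel is genuinely a sum of $n=\rho+\delta$ products, the matrix is $n\times n$, and Corollary \ref{CA1'} factors the determinant into $\det(\mathcal A)\cdot\det(\mathcal B)$ up to the resolvent constant. The paper does exactly this.

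The gap is in the inside-overlap case $1\leq u\leq\rho-1$. There the bilinear expansion has $\rho+u$ terms (the $\rho$ terms from the perturbation plus the $u$ terms from the GUE-minor sum (\ref{8.6'})), but you are taking a $\rho\times\rho$ determinant. Corollary \ref{CA1'} requires the number of terms to match the matrix size; Corollary \ref{CA1'''} is a $2N\times 2N$ block identity and does not apply either. What you actually get from Lemma \ref{LA1} is the Cauchy--Binet sum $\langle\Omega_v,\Omega_w\rangle$ over all $\binom{\rho+u}{\rho}$ minors, not a single product $\det(A)\det(B)$. The paper resolves this by computing the wedge product $w^\iota_{y_1}\wedge\cdots\wedge w^\iota_{y_\rho}$ explicitly (its Step~1): using the structure of $\mathcal B$ from Proposition \ref{PAB} and the triangular matrices $C_\rho$, $C_\rho^{\circlearrowleft}$, it shows that this $\rho$-vector in $\Lambda^\rho(\BR^{\rho+u})$ is a \emph{single} decomposable element $\det(\Psi_{\mathcal B})\cdot e_1\wedge\cdots\wedge e_{\rho-u}\wedge\tilde u_1\wedge\cdots\wedge\tilde u_u$. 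Only after this collapse can one invoke Corollary \ref{CA1''} to pair it with $\Omega_v$ and obtain a second $\rho\times\rho$ determinant $\det(\Psi_{\mathcal A})$. Your proposal skips this reduction and treats the factorization as automatic; without it, the Cauchy--Binet sum does not a priori yield a product of two determinants.

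A smaller point: for $u\leq 0$ the $\mathcal A$-column gives only the Hermite-times-Gaussian rows (Proposition \ref{PAB}); the $\Phi_k(\beta-y)$ rows in the first determinant of the theorem come from the $\mathcal B$-column after the row operations with $C_\rho$, not from $\mathcal A$. Your paragraph assigning rows has this reversed.
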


\proof 
 \underline{\em Outside the overlap}, that is for $u\leq 0$,  set $u=u_1 = u_2=-\dt \leq 0$; it then follows from (\ref{2min}) and (\ref{8.6'}) that the double Aztec kernel can be expressed as an inner product of vectors $ v^o_{y}$ and $ w^o_{y}$ of size $n:=\rho+\dt=$ size of overlap $+$ distance to overlap:
 $$
\begin{aligned}
&\frac 12\mathbb{K}^{ \textrm{\tiny{TAC}}}_{\beta,\rho}   (-\dt,y_1 ; -\dt, y_2)
\\
& =  \sum^{n-1 }_{\lambda=0} \left(\Bigl((\Id - {\mathcal K}^{(\beta)} (\lambda-\rho,\kappa-\rho)\Bigr)^{-1}  {\mathcal A}^{\beta,y_1-\beta}_{-\dt} (\kappa-\rho)\right) {\mathcal B}^{\beta,y_2-\beta}_{-\dt} (\lambda-\rho)
\\
   &= \left\la \left(\begin{array}{c }\left((\Id\!-\!{\cal K}^\beta)^{-1}
{\mathcal A}_{-\dt}^{\beta,y_1-\beta}(\lb\!-\!\rho)\right)_{0\leq \lb\leq n-1}
 \end{array}\right),
\left(\begin{array}{cc}\left({\mathcal B}_{-\dt}^{\beta,y_2-\beta}
 (\lb\!-\!\rho)\right)_{0\leq\lb\leq n-1} 
\end{array}  \right)
 \right\ra
 \\
 &=:\left\la   v^o_{y_1},w^o_{y_2}   \right\ra.
\end{aligned}$$
Since $0\leq \lambda\leq \rho+\dt-1 $, the resolvent $(\Id - {\mathcal K}^{(\beta)}  ( \lambda-\rho,\kappa-\rho ))^{-1}$ can be represented as the inverse of a square block matrix of size $n=\rho+\delta$,
 with two square blocks of sizes $\rho$ and $\dt$, with $ {\mathcal K}^{ \beta }  ( \lambda\!\!-\!\!\rho,\kappa\!\!-\!\!\rho )$ as in Proposition \ref{PAB}, 
 $$
\left(
\begin{array}{cc}
\Id_\rho - {\mathcal K}^{ \beta }  ( \lambda\!\!-\!\!\rho,\kappa\!\!-\!\!\rho )\Bigr|_{[0,\rho-1]}  &  O \\ \\
\ast  & \Id_{\dt}  
\end{array}
\right)^{-1}
\!\!\!\!=
\left(
\begin{array}{cc}
(\Id_\rho - {\mathcal K}^{ \beta }  ( \lambda\!\!-\!\!\rho,\kappa\!\!-\!\!\rho )\Bigr|_{[0,\rho-1]} )^{-1}  &  O \\ \\
\ast  & \Id_{\dt} 
\end{array}
\right) 
$$
with Fredholm determinant (see also expression (\ref{ineq}))
$$\det \left(\Id_\rho - {\mathcal K}^{ \beta }  ( \lambda ,\kappa  )\Bigr|_{[-1,-\rho]} \right)^{-1}
.$$

Since at level $\xi=2n-2u=2(n+ \dt)$, there are $
  n=\rho+\dt$ particles, one computes the $n$-correlation, using the formula for ${\cal A}_{-\dt}$ in Proposition \ref{PAB}, formula (\ref{herm3}) and Corollary \ref{CA1'}; namely, one finds for $\dt\geq 0$,
 $$
\begin{aligned}
  \det \Bigl( \tfrac 12    \mathbb{K}^{{\textrm{\tiny TAC}}}_{\beta,\rho}  &(-\dt,y_i ; -\dt,y_j)\Bigr)_{1\leq i,j \leq n }
  =\det \left( \left\la v^o_{y_i} , ~w^o_{y_j} \right\ra\right)_{1\leq i,j \leq n  }
\\
&=\det \left (v^o_{y_1},\ldots,v^o_{y_{n}}\right)
  \det \left (w^o_{y_1},\ldots,w^o_{y_{n}}\right)
\\
&=   \det \left(\Id - {\mathcal K}^{(\beta)} (\kappa-\rho ; \lambda-\rho)\right)^{-1}_{0\leq \kappa,\lambda\leq n-1}
\\
&\qquad   \det \Bigl({\mathcal A}^{\beta,y_{i+1}-\beta}_{-\dt} (\kappa-\rho) \Bigr)_{0\leq k,i\leq n-1} \det \Bigl({\mathcal B}^{\beta,y_{j+1}-\beta}_{-\dt} (\lambda-\rho) \Bigr)_{0\leq \lambda,j\leq n-1} 
\\
&= \det \left(\Id-{\mathcal K}^{(\beta)}(\kappa-\rho, \lambda-\rho)\right)^{-1}_{0\leq \kappa, \lambda\leq \rho-1} \Delta_{\rho+\dt} (y_1,\ldots,y_{n})
\\
& \qquad \det \Bigl({\mathcal B}^{\beta,y_{j+1}-\beta}_{-\dt} (\lambda-\rho)\Bigr)_{0\leq \lambda, j\leq \rho+\dt-1} \frac{1}{2^{n}} \prod^{\rho+\dt}_{i=1} \frac{e^{-(y_i+ \beta)^2}}{\sqrt{\pi}},
\end{aligned}
$$
containing the matrix, indexed by $\lb=n-1,\ldots, \rho,\dots,0$ in that order,
$$
\begin{aligned}
&\Bigl({\mathcal B}^{\beta,y_{j+1}-\beta}_{-\dt} (\lambda-\rho)\Bigr)_{0\leq \lambda, j\leq n-1} 
\\&=\left(
\begin{aligned}
&\tilde H_0(y+\beta)
\\
& \quad \vdots
\\
&\tilde H_k (y+\beta)
\\
& \quad \vdots
\\
&  \tilde H_{\delta-1} (y+\beta)
\\
& c_0\Psi_{\dt}(y-\beta)+\mbox{$\sum_{\al=0}^{\dt-1}$}c_{\dt -\al }P_{\al }(y-\beta)
\\
& \quad \vdots
\\
& c_0\Psi_{\dt+k}(y-\beta)+\ldots+c_k\Psi_\dt(y-\beta)+\mbox{$\sum_{\al=0}^{\dt-1}$}c_{\dt +k-\al }P_{\al }(y\!-\!\beta)
\\
& \quad \vdots
\\
&  c_0\Psi_{\dt+\rho -1}(y-\beta)+\ldots+c_{\rho-1}\Psi_{\dt }(y-\beta)+\mbox{$\sum_{\al=0}^{\dt-1}$}c_{ \dt     +\rho-1 -\al}P_{\al }(y\!-\!\beta)
\end{aligned}
\right)
\end{aligned} \Bigr|_{y=y_1,\dots , y_{n}}$$
Notice that the mixed entries of this column contain polynomials of degree at most $\dt-1$. Therefore, they can be removed by means of row operations with the first $\dt$ entries. So, by row operations and since $c_0=1$ by (\ref{ck}), the matrix of the vectors leads, modulo the factor $\prod_1^{\dt-1} 2^i/i!$, to a matrix of size $n=\rho+\dt$ with columns $y=y_i$ for $1\leq k\leq n$ given by:
 $$
 \left(
\begin{array}{c}
 1
\\
  \quad \vdots
\\
 y^k  
\\
  \quad \vdots
\\
   y^{\delta-1}  
\\
   \Psi_{\dt}(y-\beta)
\\
  \quad \vdots
\\
   \Psi_{\dt+k} (y-\beta)\\
  \quad \vdots
\\
    \Psi_{n -1} (y-\beta)
\end{array}
\right)
$$
Thus, using (\ref{PhiPsi}), we have shown Theorem \ref{mainTh1} for $u=-\delta$ (a level at a distance $\dt$ outside the overlap) and thus the density of particles is given by the second formula (\ref{outover}) for ${\bf y}^{(n)}$ of Theorem \ref{Th1.4}; the first formula for ${\bf x}^{(n)}$ follows then from the involution of the double Aztec diamond, alluded to just before formula (\ref{2min}); this reflected itself in the second formula (\ref{2min}) of the tacnode kernel ${\mathbb K}^{\rm \tiny \mbox{\tiny TAC}}_{\beta,\rho}$.



\bigbreak

 \noindent \underline{\em Inside the overlap}, namely for $1\leq u\leq \rho-1$, the kernel is expressible in terms of two vectors $ v^\iota_{y}$ and $ w^\iota_{y}$ (defined below) of size $\rho+u$,  using (\ref{2min}) and (\ref{8.6'}) :
\be\begin{aligned}
\frac 12&{\mathbb K}^{{\textrm{\tiny TAC}}} _{\beta,\rho}(u,y_1;u,y_2) 
\\& =  \sum^{\rho-1 }_{\lambda=0} \left(\Bigl((\Id - {\mathcal K}^{(\beta)} (\lambda-\rho,\kappa-\rho)\Bigr)^{-1}  {\mathcal A}^{\beta,y_1-\beta}_u (\kappa-\rho)\right) {\mathcal B}^{\beta,y_2-\beta}_u (\lambda-\rho)
\\
&~~~~+  \sum_{\al=1}^u \tilde H_{u-\al}(\beta-y_1)
\Phi_{\al-u-1}(y_2-\beta)\\
 =& \left\la \left(\begin{array}{c }\left((\Id\!-\!{\cal K}^\beta)^{-1}
{\mathcal A}_{u}^{\beta,y_1-\beta}(\lb\!-\!\rho)\right)_{0\leq \lb\leq \rho-1}
\\ \\
\left(\tilde H_{u-\al }(\beta-y_1)\right)_{1\leq \al\leq u }\end{array}\right),
\left(\begin{array}{cc}\left({\mathcal B}_u^{\beta,y_2-\beta}
 (\lb\!-\!\rho)\right)_{0\leq\lb\leq \rho-1}\\ \\
 \left(\Phi_{\al-u-1}(y_2-\beta)\right)_{1\leq \al\leq u }
\end{array}  \right)
 \right\ra
 \\
 &=:\left\la   v^\iota_{y_1},w^\iota_{y_2}   \right\ra
\end{aligned}\label{Kin}\ee


\noindent Step 1. {\em The wedge product of the vectors (written coordinate-wise), setting $u=\dt$,
\be \begin{aligned}
 w^\iota_{y}  = &   \left(\begin{array}{lll}
\Bigl({\mathcal B}^{\beta,y-\beta}_\dt (\lb-\rho)\Bigr)_{0 \leq \lambda \leq \rho-1}
\\
 \Bigl(\Phi_{\al-\dt-1}  (y-\beta)\Bigr)_{1 \leq \al \leq \dt }
\end{array}
\right)\\
=&    \sum^{\rho}_{\lb=1}  {\mathcal B}^{\beta,y-\beta}_\dt      (\lb-\rho-1) e_{\lb } + \sum^\dt_{\al=1} \Phi_{\al-\dt-1}  (y-\beta) e_{\rho+\al} ,
   \end{aligned}
\label{w-in}\ee 
 can be expressed as follows, 
 \[
 \begin{aligned}
 &w^\iota_{y_1} \wedge \ldots \wedge w^\iota_{y_\rho}=  \det \Bigl( \Psi_{\mathcal B}^{(\rho)} (y)\Bigr)~~        e_1 \wedge \ldots \wedge      e_{\rho-\dt} \wedge \tilde u_1 \wedge \ldots \wedge    \tilde u_\dt ,
 \end{aligned}
 \]
where the matrix $\Psi_{\mathcal B}^{(\rho)} (y)$ of size $\rho$ is defined as  
\be\Psi_{\mathcal B}^{(\rho)} (y):=\Bigl(\Psi_k (y_\ell\!-\!\beta)\Bigr)_{-\dt \leq k \leq \rho-\dt-1   \atop     1\leq \ell \leq \rho} 
=\left(\begin{array}{c}-\frac{e^{-(y-\beta)^2}}{2^\dt\sqrt{\pi}}H_{\dt-1}(\beta-y)\\
\vdots
\\
-\frac{e^{-(y-\beta)^2}}{2\sqrt{\pi} }H_{0}
\\
(-1)^0\Phi_0(\beta-y)
\\
\vdots
\\
(-1)^{\rho-\dt-1}\Phi_{\rho-\dt-1}(\beta\!-\!y)
\end{array}\right)_{y=y_1,\ldots,y_\rho}
\label{PsiB}\ee
 and
$$\tilde u_k:=e_{\rho-\dt+k}-a_0 e_{\rho+1}-\ldots a_{k-1}e_{\rho+k}
 \mbox{   for   } 1\leq k\leq \dt,$$
 with $a_i$ the entries of $C^{-1}_\dt $, as in formula (\ref{Cinv}) of Corollary \ref{C2.2}.
}%


\noindent
 {\em Proof of Step 1}: Define column vectors ${\bf u}$ and $\widetilde {\bf u}$  (whose components are $\rho$ vectors)\footnote{Define the antidiagonal matrix $D_\al$ of size $\al$,
 \[
 D_\al = 
 \left(\begin{array}{ccc}
  &  & -1 
  \\
  O & -1 &  \\
   & \ddots & O \\
   -1 &   &  \end{array}\right)
 .\]}  :
  \be
\begin{aligned}
 {\bf u}  := \left(\begin{array}{c}u_{-\rho+\delta+1}\\
 \vdots\\u_0  \\ u_1\\ \vdots \\ u_\dt\end{array}\right) 
 = \left(\begin{array}{c|c|c}
 C_{\rho-\dt} & 0 & 0 
 \\
 \hline 
 0 & C_\dt & D_\dt 
 \end{array}\right) {\bf e}~~\mbox{with}~~
  {\bf e}  =  \left(\begin{array}{c} e_1\\ \vdots \\ e_{\rho-\dt} 
  \\ \vdots \\ e_\rho 
   \\ \vdots \\ e_{\rho+\dt}\end{array}\right)\end{aligned}
\label{u1}\ee
and
\be \begin{aligned}
{\bf \tilde u}  &: = \left(\begin{array}{c} \tilde u_{-\rho+\dt+1}\\ \vdots \\
\tilde u_0 \\ \tilde u_1\\ \vdots \\ \tilde u_\dt \end{array}\right)  
 := \left(\begin{array}{c|cc}
C^{-1}_{\rho-\dt} & O    \\
 \hline 
  O & C^{-1}_\dt 
 \end{array}\right)  \ {\bf u} 
  = \left(\begin{array}{c|c|c}
 I_{\rho-\dt} & 0 & 0 \\
 \hline 
 0 & I_\dt & C^{-1}_\dt D_\dt
 \end{array}\right)  \ {\bf e} 
\\
&=(e_1,\ldots,e_{\rho-\dt},
  e_{\rho-\dt+1} - a_0 e_{\rho+1},\ldots, e_\rho - a_0 e_{\rho+1} - \ldots-a_{\dt-1}e_{\rho+\dt})^\top
 \end{aligned}\label{utilde}\ee
 where $C_\al$ is the lower-triangular matrix (\ref{Cinv}), (remember $c_0=1$), where the entries of $C^{-1}_\dt$ are as in (\ref{Cinv}), (with $a_0=1$) and where $A$ is, modulo a sign, the matrix (\ref{Cupside}) of $a_i$'s; namely
 \[
 \ A:= C^{-1}_\dt D_\dt =-C^{\circlearrowleft~  -1}_{\dt}
 .\]
Written out, the $u_\ell$'s amount to:
 \be \label{ue}
\begin{aligned}
u_\ell  = &     \sum^{\rho-\dt+\ell }_{\lb=1} c_{\rho-\dt+\ell -\lb}   \    e_{\lb }    \quad\quad\quad  \textrm{for} \ -\rho+\dt+1 \leq \ell \leq 0
\\
            = &    \sum^{\rho-\dt+\ell}_{\lb=1} c_{\rho-\dt+\ell -\lb}   e_{\lb}    -   e_{\rho+\dt-\ell+1} \quad \textrm{for} \  1\leq \ell \leq \dt.
\end{aligned}
\ee
For  $0 \leq \al \leq u-1$, we have by (\ref{PhiPsi}) and (\ref{PhiPsi-}), that
 $$
\begin{aligned}
& \Phi_{\al-u} (y-\beta) = - \Psi_{\al-u} (y-\beta).
\end{aligned}
$$
Using (\ref{B}), we have the following vector-identity for $w^\iota_{y }$ in (\ref{w-in}), 
 $$
\begin{aligned}
w^\iota_{y}  
=& \sum_{\lb=1}^{\rho }\left(\sum_{\al=1}^{\rho-\lb+1 }
c_{\rho-\lb-\al+1 }   \Psi_{\al-\dt-1}(y-\beta) \right)e_{\lb }-  \sum^\dt_{\al=1} \Psi_{\al-\dt-1}  (y-\beta)      e_{\rho+\al}
\\
= &   \sum^\rho_{\al=1} \Psi_{\al-\dt-1} {(y-\beta)}
\left(\sum^{\rho-\al +1}_{\lb=1}  c_{\rho-\lb-\al +1}   e_{\lb } \right)   
-  \sum^\dt_{\al=1} \Psi_{\al-\dt-1}  (y-\beta)      e_{\rho+\al}
\\ 
=&    \sum^\dt_{\al=1}  \Psi_{\al-\dt-1}  (y-\beta)  
\left(\sum^{\rho-\al+1}_{\lb=1}  c_{\rho-\al+1-\lb}   e_{\lb } -e_{\rho+\al}  \right)  
\\
  &  \qquad + \sum^\rho_{\al=\dt+1}  \Psi_{\al-\dt-1}  (y-\beta) 
\left(\sum^{\rho-\al+1}_{\lb=1}  c_{\rho-\al+1-\lb}   e_{\lb}    \right)  
\end{aligned}
$$
$$\begin{aligned}
=  &     \sum^\dt_{\ell=1}  \Psi_{-\ell}  (y-\beta)  
\left(\sum^{\rho-\dt+\ell }_{\lb=1}  c_{\rho-\dt+\ell-\lb}  ~ e_{\lb} -e_{\rho+\dt-\ell+1}  \right)  
\\
    &  \qquad + \sum^0_{\ell=-\rho+\dt+1}  \Psi_{-\ell}  (y-\beta) 
 \sum^{\rho-\dt+\ell}_{\lb=1}  c_{\rho-\dt+\ell-\lb}  ~ e_{\lb}      
\\
=& \sum^{\dt}_{\ell=-\rho+\dt+1} \Psi_{-\ell} (y-\beta) u_\ell,
 \end{aligned}
$$
and so
\[
w^\iota_{y_1} \wedge\ldots\wedge w^\iota_{y_\rho} = \det \Bigl(\Psi_k (y_\ell-\beta)\Bigr)_{-\dt \leq k \leq \rho-\dt-1   \atop     1\leq \ell \leq \rho}          u_{-\rho+\dt+1}     \wedge\ldots\wedge u_\dt .
\]
%
%
 Now, using the latter, one finds the formula below, which in terms of the new set of vectors $ \tilde u_{-\rho+\dt+1},\ldots,
\tilde u_0,\tilde u_1,\ldots, \tilde u_\dt$, as in (\ref{utilde}), reads
%
 %
  %
$$
 \begin{aligned}
 \Om_w&=w^\iota_{y_1}  \wedge\ldots\wedge w^\iota_{y_\rho}  
\\ &= \det \left(\Psi_{\mathcal B} (y)\right) u_{-\rho+\dt+1}       \wedge \ldots \wedge   u_\dt
 \\
 &= \det \left(\Psi_{\mathcal B} (y)\right) \det (C_{\rho-\dt}) \det(C_\dt)
  e_1 \wedge \ldots \wedge    e_{\rho-\dt}    \wedge   (e_{\rho-\dt+1} - a_0 e_{\rho+1}) \wedge  
 \\
 & \qquad\qquad\qquad\qquad\qquad \qquad\quad  \ldots \wedge  (e_\rho - a_0 e_{\rho+1} - \ldots-a_{\dt-1}e_{\rho+\dt})
 \\
 &= \det (\Psi_{\mathcal B} (y)) e_1 \wedge \ldots \wedge      e_{\rho-\dt} \wedge \tilde u_1 \wedge \ldots \wedge    \tilde u_\dt
 \end{aligned}
$$
 ending the proof of Step 1. 

 \bigbreak
Step 2.  {\em Here we write:} 
 $$
\begin{aligned}
v^\iota_{y } =
 \left(\begin{array}{l}
 \Bigl (\Id - {\mathcal K})^{-1} {\mathcal A}^{\beta,y-\beta}_\dt  (\lb-\rho -1) \Bigr)_{1\leq \lb \leq \rho}
 \\
 \Bigl(\widetilde H_{\dt-\al} (\beta-y) \Bigr)_{1\leq \al \leq \dt}
 \end{array}\right)   =     
  \left(\!\begin{array}{c}
 v^\iota  _{1,y}
  \\
  \vdots
  \\
 v^\iota_{\rho+\dt,y}
 \end{array}\!\!\right)   
 = \sum^{\rho+\dt}_{i=1}   v^\iota_{i,y} e_i~,
 \end{aligned}
 $$
 and 
 $$\Om_v= v^\iota_{y_1}   \wedge\ldots\wedge v^\iota_{y_\rho}
=\sum^{\rho+\dt}_{i=1} v_{i,y_1}   e_i \wedge \ldots \wedge    \sum^{\rho+\dt}_{i=1}  v_{i,y_\rho}  e_i ~.$$
 \bigbreak
Step 3. {\em To compute (\ref{Kin}), one uses Corollary \ref{CA1'}, but also the wedge products obtained in steps 1 and 2: 
 $$\begin{aligned}
&\det \Bigl( \tfrac 12 \mathbb{K}^{{\rm {\tiny TAC}}} _{\beta,\rho}  ( \dt,y_i ;  \dt,y_j)\Bigr)_{1\leq i,j \leq \rho   }
\\
 &=\det \left( \left\la v^\iota_{y_i} , ~w^\iota_{y_j} \right\ra\right)_{1\leq i,j \leq \rho   }
\\
  \\&=\bigl\langle   
 v^\iota_{y_1}   \wedge\ldots\wedge v^\iota_{y_\rho}
 , 
 w^\iota_{y_1}  \wedge\ldots\wedge w^\iota_{y_\rho}
      \bigr\rangle
\\&=\left \langle \Om_v,\Om_w\right\rangle
\\ &=\det  \Psi_{\mathcal B} (y) 
 \langle   \sum^{\rho+\dt}_{i=1} v_{i,y_1}   e_i \wedge \ldots \wedge    \sum^{\rho+\dt}_{i=1}  v_{i,y_\rho}  e_i~~,~~ e_1  \wedge \ldots \wedge   e_{\rho-\dt} \wedge \tilde u_1 \wedge \ldots \wedge \tilde u_\dt  \rangle
 \\  &=\det \Psi_{\mathcal B} (y_1,\ldots,y_\rho)  .~\det \Psi_{\mathcal A} (y_1,\ldots, y_\rho),\end{aligned} $$
 with $\Psi_{\mathcal A}$ defined as
  \be
 \begin{aligned}
 \Psi_{\mathcal A}  (y_1,\ldots,y_\rho) 
:=(\Id-{\mathcal K}^{(\beta)}\bigr|_{[-1,-\rho]})^{-1} 
\left(\begin{array}{l}
\frac {e^{-(y+\beta)^2}}{2^{\rho-\dt }\sqrt{\pi}}
H_{\rho-\dt-1 }(y+\beta)
\\
\vdots
\\
\frac {e^{-(y+\beta)^2}}{2^{ }\sqrt{\pi}}
H_{0 } 
\\
(-1)^1\Phi_{0}( y +\beta)
\\
\vdots
\\
(-1)^{\dt }\Phi_{\dt-1}( y+\beta)
\end{array}\right)_{y=y_1,\ldots,y_\rho}
.\end{aligned}
\label{PsiA}\ee}
 Indeed, to compute $\Psi_{\mathcal A}$, we use Corollary \ref{CA1''}, 
$$
 \begin{aligned}
 \Psi_{\mathcal A}& (y_1,\ldots,y_\rho) \\&=
 \left(\begin{array}{l}
 (v_{i,y_j})_{1\leq i \leq \rho-\dt \atop 1 \leq j \leq \rho}
 \\
 (I_\dt \bigl | A_\dt) (v_{\rho-\dt+i,y_j})_{1\leq i \leq 2\dt \atop 1 \leq j \leq \rho}
  \end{array}\right)  
  \\
  &=
   \left(\begin{array}{l}
  v_{1,y }
  \\
  \vdots
  \\
  v_{\rho-\dt,y }
  \\
  v_{\rho-\dt+1,y }-a_0 v_{\rho+\dt,y }
  \\
  v_{\rho-\dt+2,y }-a_0 v_{\rho+\dt-1,y } -a_1 v_{\rho+\dt,y }
  \\
  \vdots
  \\
  v_{\rho,y } - a_0 v_{\rho+1,y } - a_1 v_{\rho+2,y }-\ldots-a_{\dt-1} v_{\rho+\dt,y } 
     \end{array}\right)_{y=y_1,\ldots,y_\rho}  
 \end{aligned}$$
 \be \begin{aligned}  %
\\ \\ & = \left(
 \begin{aligned}
& (\Id-{\mathcal K})^{-1} {\mathcal A}^{\beta,y-\beta}_\dt (-\rho)
 \\
 & \qquad  \vdots
 \\
 & (\Id-{\mathcal K})^{-1}  {\mathcal A}^{\beta,y-\beta}_\dt (-\dt-1)
 \\
 & (\Id-{\mathcal K})^{-1}  {\mathcal A}^{\beta,y-\beta}_\dt (-\dt) - a_0 \widetilde H_0 (\beta-y)
 \\
 & \qquad \vdots
 \\
 & (\Id - {\mathcal K})^{-1}    {\mathcal A}^{\beta,y-\beta}_\dt   (-\dt+\mu-1) - \sum_{p+q=\mu-1}  a_p \widetilde H_q (\beta-y)  
 \\
 &\qquad \vdots
 \\
& (\Id-{\mathcal K})^{-1}  {\mathcal A}^{\beta,y-\beta}_\dt (-1) - \sum_{p+q=\dt-1}  a_p \widetilde H_q (\beta-y) 
 \end{aligned}
 \right)_{y=y_1,\ldots,y_\rho}
\\
& =\left( \sum\limits^\rho_{\mu=1} (\Id-{\mathcal K})^{-1} {\mathcal A}^{\beta,y-\beta}_\dt (  -\rho+\mu-1) e_\mu - \sum ^\dt_{\mu=1} e_{\mu+\rho-\dt} \sum\limits_{p+q=\mu-1} a_p \widetilde H_q (\beta-y) \right)_{y=y_1,\ldots,y_\rho}
 \end{aligned}.
 \label{PsiA'}\ee
 The matrix $\Id-{\mathcal K}$ in (\ref{PsiA'}) is in short the matrix $ (\Id - 
  {\mathcal K} (\lb , \kappa ) )_{-1 \leq \lb,\kappa\leq -\rho}$, 
 where, by formula (\ref{Kscrip}), the $\mu^{\textrm{th}}$ column for $1 \leq \mu \leq \rho$ is given by
$$
\begin{aligned}
(\Id - {\mathcal K} (\lb -\rho -1,\kappa-\rho-1)) e_\mu 
 =     \Bigl(\dt_{\lb,\mu} - \sum^{\rho-\mu}_{\al=0} c_{\rho-\mu-\al} \bar c_{\lb-\rho+\al}\Bigr)_{1\leq \lb \leq \rho}
.\end{aligned}
$$
%
%
Acting with $(\Id-{\mathcal K}) $ on this matrix, using (\ref{Cinv}) and the second equation (\ref{Hermid}), together with Lemma \ref{L2.1}, one finds for its $(\lb,y)$th entry with $1\leq \lb \leq \rho,$ 
$$
\begin{aligned}
 {\mathcal A}^{\beta,y-\beta}_\dt& (\lb-\rho-1)  - \sum\limits^\dt_{\mu=1} ((\Id-{\mathcal K}) e_{\mu+\rho-\dt} )(\lb)  \sum\limits_{p+q=\mu-1} a_p \widetilde H_q (\beta-y)
\\
=& {\mathcal A}^{\beta,y-\beta}_\dt   (\lb-\rho-1)  
  -\sum\limits_{\mu=1}^\dt\Bigl(\dt_{\lb,\rho-\dt+\mu}-{\mathcal K}(\lb-\rho-1,\mu-\dt-1)\Bigr) \sum\limits_{p+q=\mu-1}  a_p \widetilde H_q (\beta-y)  
  \\
= \ & {\mathcal A}^{\beta,y-\beta}_\dt (\lb-\rho-1)
 -  \sum^\dt_{\mu=1} \dt_{\lb,\rho-\dt+\mu} \sum^{\mu-1}_{q=0}   a_{\mu-q-1} \widetilde H_q (\beta-y)
\\
& + \sum^\dt_{\mu=1}   \sum^{\dt-\mu}_{\al=0} c_{\dt-\mu-\al} \tilde c_{\lb-\rho+\al} \sum^{\mu-1}_{q=0} a_{\mu-q-1} \widetilde H_q (\beta-y)
\end{aligned}
$$
$$
\begin{aligned}
 = \ & \Phi_{\lb-\rho+\dt-1} (-y-\beta) - \sum^{\dt-1}_{\al=0} \tilde c_{\lb-\rho+\dt-\al-1} \widetilde H_\al (\beta-y)
\\
& - \sum^{\lb-(\rho-\dt)+1}_{q=0}  a_{\lb-\rho+\dt-q-1}  \widetilde H_q (\beta-y)
 \\
 &
  + \sum^{\dt-1}_{q=0} \widetilde H_q (\beta-y) \sum^{\dt-q-1}_{\al=0} \tilde c_{\lb-\rho+\al} \sum^{\dt-\al}_{\mu=q+1} a_{\mu-q-1} c_{\dt-\al-\mu}
\\
= \ & \Phi_{\lb-\rho+\dt-1} (-y-\beta) - \sum^{\dt-1}_{\al=0} \tilde c_{\lb-\rho+\dt-\al-1} \widetilde H_\al (\beta-y)
\\
& - \sum^{\lb-(\rho-\dt)-1}_{q=0}  a_{\lb-(\rho-\dt)-q-1}  \widetilde H_q (\beta-y)
  + \sum^{\dt-1}_{q=0} \widetilde H_q (\beta-y) \sum^{\dt-q-1}_{\al=0} \tilde c_{\lb-\rho+\al}  \dt_{\al,\dt-q-1}
\\
= \ & \Phi_{\lb-\rho+\dt-1} (-y-\beta) - \sum^{\lb-(\rho-\dt)-1}_{q=0} a_{\lb-(\rho-\dt)-q-1}  \widetilde H_q (\beta-y)=:\star
\end{aligned}
$$
Then for $ 1\leq\lb\leq\rho-\dt$, we have by (\ref{PhiPsi}) and (\ref{PhiPsi-})
$$
\begin{aligned}
\star = \ &   \Phi_{\lb-\rho+\dt-1} (-y-\beta)=\frac {e^{-(y+\beta)^2}}{2^{\rho-\dt-\lb+1}\sqrt{\pi}}
H_{\rho-\dt-\lb }(y+\beta)=\frac {e^{-(y+\beta)^2}}{2\sqrt{\pi}} y^{\rho-\dt-\lb}+\ldots
,\end{aligned}
$$
and for $\rho-\dt+1 \leq \lb \leq \rho$, we have, 
using (\ref{Hermid}) and (\ref{PhiPsi}),
$$
\begin{aligned}
\star =   &  \Phi_{\lb-\rho+\dt-1} (-y-\beta)  - P_{\lb-\rho+\dt-1} (-y-\beta)
\\
=   & - G (-y-\beta) P_{\lb-\rho+\dt-1} (-y-\beta) - G' (-y-\beta) Q_{\lb-\rho+\dt-2} (-y-\beta)
\\
=  & - \Psi_{\lb-\rho+\dt-1}   (-y-\beta)
\\
= &(-1)^{\lb-\rho+\dt }\Phi_{\lb-\rho+\dt-1}(y+\beta)
.\end{aligned}
$$
 This establishes the formula (\ref{PsiA}) for $\Psi_{\mathcal A}$.
 
 So we have
 $$
\begin{aligned}
& \det \left(\frac 12 \BK_{\beta, \rho}^  {\mbox{\tiny \rm TAC}}   ( \delta,y_i; \delta,y_j) \right)_{1\leq i,j\leq \rho} 
 = \det \Psi_{\mathcal B} (y_1,\ldots,y_\rho)   \det \Psi_{\mathcal A} (y_1,\ldots, y_\rho)   
  \end{aligned}
 $$ 
 with the matrices $\Psi_{\mathcal B}$ and $\Psi_{\mathcal A}$ as in (\ref{PsiB}) and (\ref{PsiA}), 
 which by row operations leads to formula (\ref{Kout}), thus ending the proof of Theorem \ref{mainTh1}.\qed 
 
 
  \section{Two-level density}
  
  This section deals with the joint density of the particles ${\bf z}^{(n)}={\bf x}^{(n)}$ and ${\bf z}^{(-\dt)}={\bf y}^{(n)}$ in the double Aztec diamond model, at two levels $u=\rho+\dt=n$ and $u=-\dt$, for $\dt\geq 0$; thus the two levels are symmetric with regard to the overlap; see Figure 5. This will establish formula (\ref{joint}) of Theorem \ref{Th1.4} for the diamond model, if we show that $\Gamma_{{\bf x}{\bf y}} =\mbox{Vol}({\mathcal C}_{{\bf x}{\bf y}})$, where $ {\mathcal C}_{{\bf x} {\bf y}} $ is the double cone (\ref{cone}); this identity will be shown in the next section. 
  
 \begin{theorem}\label{Th:JProb}
 The joint probability of the particles at distinct levels for the tacnode process is given by:
     \be\begin{aligned}
 \BP^{\mbox{\tiny\rm TAC}}& \left(
  \{{{\bf x}^{(n)}\in d{\bf x} ,~{\bf y}^{(n)}\in d{\bf y}  }  \}
 \right)
 =\frac{\rho_n^{ \mbox{\tiny GUE}}({\bf x}\!-\!\beta) 
 ~ \rho_n^{\mbox{\tiny GUE}}( {\bf y}\!+\!\beta)}
 { \det(\Id- {\mathcal K}^{\beta}
 )_{[-1, -\rho]}    }
 \frac { \Gamma_{{\bf x}{\bf y} }
  d{\bf x}d{\bf y} }{\mbox{\em Vol} ({\mathcal C}_{{\bf x} })~ \mbox{\em Vol} ({\mathcal C}_{ {\bf y} })}
, \end{aligned}
 \label{joint1}\ee   
where          
       \be\begin{aligned}
\Gamma_{{\bf x} {\bf y}} 
&= (-1)^{n-1}  \det\left[    \frac { (y_j- x_i) ^{n-1}  }{ (n-1)!}   ~
 -\sum_{k=1}^{n} {\mathcal L}_{ik} 
 {\mathbb H}^{n+\dt}( y_j-x_k )\right]_{1\leq i,j\leq n} .
\end{aligned}\label{vol1}\ee
  In this formula,  ${\mathbb H}^{n+\dt}$ is the Heavyside function, as in (\ref{8.6}) and ${\mathcal L}$ is a $n\times n$ matrix, of rank\footnote{In particular the sum of the columns $=0$.} $\rho$ and of trace $ 0$ with entries
\be
 {\mathcal L}_{ij} =\left\{\begin{aligned}&~ {(-1)^\dt  }{\dt}
 \frac          {\hat R^{(\dt-1)}_{ij}(x_i-\beta) }   {R'(x_j-\beta)}       
 \mbox{  if $i\neq j$}
\\  &\frac{(-1)^{\dt }}{ \dt+1 } \frac{R^{(\dt+1)}(x_i-\beta)}{R'(x_i-\beta)}  
 \mbox{  if $i= j$,}\end{aligned}\right.
 \label{vol2}\ee
 with polynomials $R(z)$ and $\hat R_{ij}(z)$ of degree $n$ and $n-2$:
 \be \begin{aligned}
 R(z)&:=\prod_{1\leq k\leq n} (z-x_k+\beta) \mbox{     and   }
 \hat R_{ij}(z) :=\prod_{{1\leq k\leq n}\atop{k\neq i,j}}(z-x_k+\beta).
 \end{aligned}  \label{R}\ee

\end{theorem}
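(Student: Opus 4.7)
\smallskip

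The plan is to compute the $2n$-point correlation function of the tacnode process at the two symmetric levels $u=n$ and $u=-\delta$ using the standard determinantal formula
\[
\BP^{\mbox{\tiny TAC}}\bigl({\bf x}^{(n)}\in d{\bf x},~{\bf y}^{(n)}\in d{\bf y}\bigr) = \det\!\Bigl(\BK_{\beta,\rho}^{\mbox{\tiny TAC}}(u_a,z_a;u_b,z_b)\Bigr)_{1\le a,b\le 2n}\,d{\bf x}\,d{\bf y},
\]
where the first $n$ points sit at level $u=n$ with coordinates $x_i$ and the next $n$ at level $u=-\delta$ with coordinates $y_j$. For the upper-left and cross blocks we use form $(*)$ of (\ref{2min}) with $u_1=n\ge\rho$ and $u_2\in\{n,-\delta\}$, so the minor kernel enters via $\BK^{\mbox{\tiny minor}}(n,\beta-x_i;n,\beta-x_j)$ and $\BK^{\mbox{\tiny minor}}(n,\beta-x_i;-\delta,\beta-y_j)$; for the lower-right block we use form $(**)$ so that the minor kernel becomes $\BK^{\mbox{\tiny minor}}(n,\beta+y_i;n,\beta+y_j)$, which by Proposition \ref{PAB} is expressible in the $\widetilde H_k,\Phi_k$ basis.

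Next, following the strategy of Theorem \ref{mainTh1}, I would write each kernel entry as an inner product $\langle v^{(a)}_{z_i},w^{(b)}_{z_j}\rangle$, where the vectors live in a space of dimension $M=2n+\rho$ (the $n$ Hermite/Heaviside coordinates at each of the two levels, together with the $\rho$ resolvent coordinates supplied by the perturbation $(\Id-{\mathcal K}^\beta)^{-1}_{[-1,-\rho]}$). Invoking Corollary \ref{CA1'''} splits the $2n\times2n$ determinant as $\det(WV)=\det(W)\det(V)$ of two $2n\times M$ and $M\times 2n$ matrices; the resolvent columns contribute the factor $\det(\Id-{\mathcal K}^\beta)^{-1}_{[-1,-\rho]}$. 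The same row operations that were used in the proofs of Theorem \ref{mainTh1} at the two levels (the wedge product manipulations involving $\tilde u_1,\ldots,\tilde u_\delta$ built from the $a_k$'s of $C_\delta^{-1}$) then factor the $v$- and $w$-determinants into the two single-level blocks $\Psi_{\mathcal A},\Psi_{\mathcal B}$ (producing $\rho_n^{\mbox{\tiny GUE}}({\bf x}-\beta)\rho_n^{\mbox{\tiny GUE}}({\bf y}+\beta)$ after identifying $\det\Psi_{\mathcal A}\det\Psi_{\mathcal B}$ with the GUE-densities via (\ref{HP})) plus a residual cross-block which we will call $\Gamma_{{\bf x}{\bf y}}$.

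The residual cross-block inherits exactly the Heaviside term $-\BH^{n+\delta}(\beta-y_j-(\beta-x_i))=-\BH^{n+\delta}(x_i-y_j)$ of the minor kernel, together with the rank-$\rho$ perturbation sum. After pulling out the GUE normalizations and using the identity $\widetilde{\mathcal H}^{(-x)}_n\cdot(P_{i-1}(y_j))=((2(y_j-x_i))^{n-1}/(n-1)!)$ from (\ref{HP}) to convert polynomial rows into powers $(y_j-x_i)^{n-1}/(n-1)!$, the surviving expression is a determinant of the form (\ref{vol1}). The coefficients ${\mathcal L}_{ik}$ emerge from evaluating the finite sum $\sum_\lambda\bigl((\Id-{\mathcal K}^\beta)^{-1}{\mathcal A}^{\beta,x_i-\beta}_n\bigr)(\lambda-\rho)\,{\mathcal B}^{\beta,y_j-\beta}_{-\delta}(\lambda-\rho)$ on the common Hermite basis, then applying the Hermite-interpolation identity (Lagrange interpolation at the nodes $x_k-\beta$) to collapse the Hermite-polynomial coefficients into residues of the rational function $R^{(\delta+1)}/R$ and of $\hat R_{ij}^{(\delta-1)}/R'$ as in (\ref{vol2}); the rank-$\rho$ and trace-$0$ properties are built-in consequences of the degree counts of $R$ and $\hat R_{ij}$.

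\emph{The main obstacle} will be step three: obtaining the precise closed form (\ref{vol2}) of ${\mathcal L}_{ij}$. Passing from the abstract inner-product sum to the partial-fraction expressions in $R(z)=\prod_k(z-x_k+\beta)$ and $\hat R_{ij}(z)$ requires a careful residue calculus on the contour integrals defining ${\mathcal A},{\mathcal B},{\mathcal K}^\beta$ in (\ref{defAB}), identifying the Hermite-polynomial sums $\sum a_p\widetilde H_q$ that appeared in the proof of Theorem \ref{mainTh1} with derivatives of $R$ at the roots $x_k-\beta$; the split $i=j$ versus $i\neq j$ in (\ref{vol2}) then reflects the usual single-pole versus double-pole contributions in this residue expansion.
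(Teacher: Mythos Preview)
Your high-level plan is sound---compute the $2n\times2n$ correlation determinant, express each block as an inner product, and factor via a Cauchy--Binet type identity---but two of the concrete steps you propose would not go through as stated.

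First, the cross block $\BK^{\mbox{\tiny TAC}}(n,x_i;-\delta,y_j)$ carries the minor-kernel Heaviside contribution $-\BH^{\rho+2\delta}(2(y_j-x_i))$, which is a \emph{single} term rather than a sum over an index $\lambda$, so it does not sit inside any natural inner product. The paper's device for handling this is to introduce auxiliary functions $f_\lambda({\bf x},y)$ defined by the linear system
\[
\sum_{\lambda=0}^{n-1}\widetilde H_{n-\lambda-1}(-x_i)\,f_\lambda({\bf x},y)=-\BH^{\rho+2\delta}(2(y-x_i)),\qquad 1\le i\le n,
\]
so that the Heaviside becomes $\langle h_{x_i},f\rangle$ and can be merged with the minor sum into $\langle h_{x_i},\hat g_{y_j}\rangle$ where $\hat g=g+f$. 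This is the hinge on which the whole factorisation turns, and your proposal does not supply it. Second, with your proposed ambient dimension $M=2n+\rho>2n$ you cannot split $\det(WV)=\det W\cdot\det V$: those matrices are not square, and Corollary~\ref{CA1'''} requires $M=2N$. The paper achieves $M=2n$ by (i) the $f_\lambda$ trick just described, and (ii) a row operation exploiting that the perturbation vectors $v^+_{x_i}$ lie in the span of the $v^-_{y_k}$ (both live in the same $n$-dimensional $\lambda$-space), so the $v^+$ contributions in the bottom block-row can be eliminated for free. After that, Corollary~\ref{CA1'''} applies with $(a_k,b_k)=(v^-_{y_k},h_{x_k})$ and yields the three-factor form $\det(\Id-\mathcal K^\beta)^{-1}\cdot\det(\mathcal A)\cdot\det(h)\cdot\det M_{yx}$; a sequence of row operations on $M_{yx}$ (using Corollary~\ref{C2.3}) then triangularises it. Mixing forms $(*)$ and $(**)$ for different blocks, as you suggest, is not needed and in fact obstructs a uniform inner-product structure. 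Finally, the explicit shape of $\mathcal L_{ij}$ in (\ref{vol2}) does not come from residue calculus on the contour integrals in (\ref{defAB}); in the paper it arises directly from Cramer's rule for the $f_\lambda$-system, namely $\mathcal L_{ik}=2^\delta\sum_{\alpha=1}^\rho\widetilde H_{\rho-\alpha}(\beta-x_i)\bigl(\widetilde{\mathcal H}_n^{(\beta-x)}\bigr)^{-1}_{\alpha,k}$, which one then identifies with the rational expressions in $R,\hat R_{ij}$.
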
  

\noindent {\em Remark}: Although formula (\ref{vol1}) involves rational functions, the determinant will nevertheless be a polynomial in the $x_i$ and $y_i$'s.

 \vspace*{.5cm}
\noindent  Before giving the proof we need some preliminary facts. Setting $-u=\dt\geq 0$, 
recall from Proposition \ref{PAB}, the expression for $0\leq \kappa\leq \rho+\dt-1$,
$$\begin{aligned}  {\cal A}^{\beta,y-\beta }_{-\dt }(\kappa-\rho)& =  
 \frac{e^{-(y+\beta)^2}}{\sqrt{\pi}}
  \frac{H_{\rho-\kappa+\dt-1}(y+ \beta)}{2^{\rho-\kappa+\dt }}
   \end{aligned}
$$
The expression for 
${\cal A}^{\beta,x-\beta }_{\rho+\dt }(\lambda-\rho)$, which seemingly appears in the calculation, will actually disappear. 
 
Also, recall from Corollary \ref{C2.3} the matrix $C^{\circlearrowleft}_{\rho} $. Then from (\ref{B}), (\ref{B'}) and (\ref{B''}), it follows that for $1\leq i\leq \rho+\dt$ and $0\leq \lb\leq \rho+\dt-1$,     
   \be\begin{aligned}  
 \bullet~~&{\mathcal B}^{\beta,y-\beta}_{-\dt}  (i-\rho-1) \\
&=\widetilde H_{\rho-i+\dt }(y+ \beta)
  - 
   \sum^{\rho-i}_{\alpha=0}  c_{\rho-i-\alpha } \Phi_{\alpha+\dt}(y-\beta)
 \\
 &=\tilde H_{\rho-i+\dt }(y+ \beta)
  - 
   \{C^{\circlearrowleft}_{\rho} \bigl(\Phi_{\dt}(y-\beta), \ldots, \Phi_{\dt+\rho-1}(y-\beta) \bigr)^{\top} \}_i
\\
\\&=C^{\circlearrowleft }_{\rho}\left\{ \Bigl( C^{\circlearrowleft ~-1}_{\rho}(\widetilde H_{\rho+\dt-1 }(y+ \beta), \ldots,\widetilde H_{ \dt }(y+ \beta))^\top\Bigr)_i  - \Phi_{\dt+i-1}(y-\beta) \right\}_i ~~~~ \mbox{for $1\leq i\leq \rho$} 
\\ \\
&{\mathcal B}^{\beta,y-\beta}_{-\dt}  (i-\rho-1) 
 =\widetilde H_{\rho-i+\dt }(y+ \beta)
  ~~~~ \mbox{for $\rho+1\leq i\leq \rho+\dt$}  \end{aligned}
 \label{5B}\ee
%
Also, by Proposition \ref{PAB} and (\ref{PhiPsi-}),
 \be\begin{aligned} \bullet~~  {\mathcal B}^{\beta,x-\beta}_{\rho+\dt}  (i\!-\!\rho\!-\!1)  
& =  \sum^{\rho-i }_{\al=0} c_{\rho-i-\al} \Psi_{\al-\rho-\dt } (x-\beta) 
 =-   \sum^{\rho-i }_{\al=0} c_{\rho-i-\al} \Phi_{\al-\rho-\dt } (x- \beta) 
\\ &= 
  - 
   \{C^{\circlearrowleft}_{\rho} \left(\Phi_{-\dt-\rho}(x\!-\!\beta), \ldots, \Phi_{-\dt-1}(x\!-\!\beta)\right)^{\top}\}_i
 ~~~ \mbox{for $ 1\leq i\leq \rho $} 
\\
 {\mathcal B}^{\beta,x-\beta}_{\rho+\dt}  (i\!-\!\rho\!-\!1)  
& =0~~~ \mbox{for $\rho+1\leq i\leq \rho+\dt$}. \end{aligned}\ee
 Remember the expressions $g_{y } (\kappa)$  and $ h_{y } {(\lb)}$ from (\ref{gh}). For future use, we also define a new expression $\widehat g_{y } (\kappa)$,
\be\begin{aligned}
   \widehat g_{y } (\lb+\dt) :& =g_{y } (\lb+\dt)+f_{\lb}({\bf x-\beta},{y-\beta })=\Phi_{\lambda+\dt}(y- \beta)+ f_{\lb}({\bf x}-\beta,{y-\beta })
    \\
   h_{x } {(\lb\!-\!\rho\!-\!\dt)}& = \widetilde H_{ \rho+\dt -\lb-1}(\beta-x),
\end{aligned}\label{ghath}\ee
with $f_{\lb}({\bf x},{y })$ defined such that for $1\leq i\leq n$,
 \be\begin{aligned}
 \left\la h_{x_i}(\lb-n) ,  f_{\lb}({\bf x},{y  })\right\ra_{\lb \geq 0}  &=\sum_{\lb=0}^{n-1}
\widetilde H_{ n -\lb-1}( -x_i)  f_{\lb}({\bf x},{y  })
  = -
\BH^{ \rho+2\dt } (2(y -x_i) ).
 \end{aligned}\label{f}\ee
  This linear system of $n$ equations in the $n$ expressions $f_{\lb}$ will be discussed later on in (\ref{lin1}).

  \proof Remember the joint density of the $n:=\rho+\dt$ particles $y_i$ at level $-\dt$ and $x_i$ at level $\rho+\dt$ is given by the following determinant
 $$\begin{aligned}
\frac 1{2^{2n}}& ~p^{\mbox{\tiny\rm TAC}}(-\dt, {\bf y}; \rho+\dt, {\bf x})\\
:=&\det\left(\begin{array}{c|c}
\left(\tfrac 12 \mathbb{K}^{\textrm{\tiny TAC}}_{\beta,\rho}   (-\dt,y_i ; -\dt, y_j)\right)_{1\leq i,j\leq n}
& \left(\tfrac 12\BK^{\textrm{{\tiny TAC}}}_{\beta,\rho} 
  (-\dt, y_i ; n, x_j) \right)_{1\leq i,j\leq n} \\ \\
  \hline \\
   \left(\tfrac 12\BK^{\textrm{\tiny TAC}}_{\beta,\rho} (n, x_i ; -\dt,  y_j)\right) _{1\leq i,j\leq n}
& \left(\tfrac 12\BK^{\textrm{{\tiny TAC}}}_{\beta,\rho}     
        ( n, x_i ; n,  x_j)\right)_{1\leq i,j\leq n }
      \end{array}\right)  \end{aligned} 
$$
So, we need to compute the kernel $\BK^{\textrm{{\tiny TAC}}}_{\beta,\rho}$ evaluated at different points, appearing in this matrix. Recall from (\ref{2min}) and (\ref{8.6'}) that
   \be\begin{aligned}
&                 \frac{1}{2} \  \BK^{\textrm{{\tiny TAC}}}_{\beta,\rho} (u_1, y_1 ; u_2,  y_2) 
\\
& 
                  = -\Id_{u_1>u_2}   \BH^{u_1-u_2} (2(y_2-y_1)) + \Id_{u_1\geq 1}\sum_{\al = 0} ^{u_1-1}g_{y_2}(\al-u_2) h_{y_1}(\al-u_1)
\\
& +\sum_{\lambda=0}^{\max(\rho-1,\rho-1-u_2)}  \bigl((\Id - {\cal K}^\beta ( \lambda-\rho ,\kappa-\rho ))^{-1}  {\cal A}^{\beta,y_1-\beta}_{u_1 }\bigr)(\lambda-\rho)  {\cal B}^{\beta,y_2-\beta}_{u_2 }(\lambda-\rho) 
  _{_{ }}.
\end{aligned}\label{symm}\ee
It follows that, using (\ref{f}) in equality $\stackrel{*}{= }$ below and $n=\rho+\dt$,
 \be \begin{aligned}
  \frac 12& \ \mathbb{K}^{\textrm{{\tiny TAC}}}_{\beta,\rho}   (-\dt,y_i ; -\dt, y_j)
\\
& =  \sum^{n-1 }_{\lambda=0} \left(\Bigl((\Id - {\mathcal K}^{(\beta)} (\lambda-\rho,\kappa-\rho)\Bigr)^{-1}  {\mathcal A}^{\beta,y_i-\beta}_{-\dt} (\kappa-\rho)\right) {\mathcal B}^{\beta,y_j-\beta}_{-\dt} (\lambda-\rho)
 =:\left\la   v ^-_{y_i},w^- _{y_j}   \right\ra,
\end{aligned}\label{K1}\ee
{  \be  \hspace*{-2cm} \begin{aligned}
                 \frac{1}{2}    \ \BK^{\textrm{{\tiny TAC}}}_{\beta,\rho}& (n, x_i ; -\dt,  y_j) 
                   =    \sum_{\lb = 0}^{n-1} h_{x_i}(\lb-n) g_{y_j}(\lb+\dt) 
-    \BH^{\rho+2\dt} (2(y_j-x_i)) \\
&~~ +\sum_{\lambda=0}^{ n-1}  \bigl((\Id - {\cal K}^\beta ( \lambda-\rho ,\kappa-\rho ))^{-1}  {\cal A}^{\beta,x_i-\beta}_{n }\bigr)(\lambda-\rho)  {\cal B}^{\beta,y_j-\beta}_{-\dt }(\lambda-\rho) 
  \\
  &\stackrel{*}{= }  \sum_{\lb = 0}^{n-1} h_{x_i}(\lb-n) \Bigl(  g_{y_j}(\lb+\dt)
+  f_{\lb}({\bf x},{y_j -\beta})\Bigr)
\\
&~~ +\sum_{\lambda=0}^{n-1}  \bigl((\Id - {\cal K}^\beta ( \lambda-\rho ,\kappa-\rho ))^{-1}  {\cal A}^{\beta,x_i-\beta}_{n }\bigr)(\lambda-\rho)  {\cal B}^{\beta,y_j-\beta}_{-\dt }(\lambda-\rho)
 \\&=:\left\la h_{x_i} ,  \widehat g^{~-}_{y_j}  \right\ra
 +\left\la   v ^+_{x_i},w^- _{y_j}   \right\ra,
 \end{aligned}\label{K2}\ee}
  and
  \footnote{In both expressions below the sum $ \sum_{\lambda=0}^{ \rho+\dt-1 }$ could be replaced by $ \sum_{\lambda=0}^{ \rho-1 }$, since the extra terms are $=0$.}
\be\begin{aligned}
                  \frac{1}{2}  &  \BK^{\textrm{{\tiny TAC}}}_{\beta,\rho} 
  (-\dt, y_i ; n, x_j)  
\\
&  = \sum_{\lambda=0}^{ n-1 }  \bigl((\Id - {\cal K}^\beta ( \lambda-\rho ,\kappa-\rho ))^{-1}  {\cal A}^{\beta,y_i-\beta}_{-\dt }\bigr)(\lambda-\rho)  {\cal B}^{\beta,x_j-\beta}_{n }(\lambda-\rho) 
 = :\left\la   v ^-_{y_i},w^+ _{x_j}   \right\ra
,\end{aligned}\label{K3}\ee
\be\hspace*{-2cm}\begin{aligned}
                  \frac{1}{2} &\ \BK^{\textrm{{\tiny TAC}}}_{\beta,\rho}     
        ( n, x_i ; n,  x_j) 
\\
 & 
                   =  \sum_{\lb = 0}^{n-1} h_{x_i}(\lb-n) g_{x_j}(\lb-n) 
\\
&~~ +\sum_{\lambda=0}^{ n-1 }  \bigl((\Id - {\cal K}^\beta ( \lambda-\rho ,\kappa-\rho ))^{-1}  {\cal A}^{\beta,x_i-\beta}_{n }\bigr)(\lambda-\rho)  {\cal B}^{\beta,x_j-\beta}_{n }(\lambda-\rho) 
  \\
&=:\left\la h_{x_i} ,   g^+_{x_j} \right\ra
+\left\la   v ^+_{x_i},w^+ _{x_j}   \right\ra.
\end{aligned}\label{K4}\ee
Then combining (\ref{K1}), (\ref{K2}), (\ref{K3}) and (\ref{K4}), we compute, using Corollary \ref{CA1'''} in the third equality and using row operations in the second identity: 
$$
\begin{aligned} 
&\frac 1{2^{2n}}~p^{\mbox{\tiny\rm TAC}}(-\dt,{\bf y}; n, {\bf x})\\
&=\det\!\left(\begin{array}{c|c}\!
\left(\left\la   v ^-_{y_i},w^- _{y_j}   \right\ra\right)_{1\leq i,j\leq n}
& \left(\left\la   v ^-_{y_i},w^+ _{x_j}   \right\ra\right)_{1\leq i,j\leq n} \\ \\
  \hline \\
   \left(\left\la h_{x_i} , \widehat g^{~-}_{y_j} \right\ra
\! +\! \left\la   v ^+_{x_i},w^- _{y_j}   \right\ra\right) _{1\leq i,j\leq n}
& \left(\left\la h_{x_i} ,   g^+_{x_j} \right\ra
\! +\! \left\la   v ^+_{x_i},w^+ _{x_j}   \right\ra\right)_{1\leq i,j\leq n}
\end{array}\right)
\\  \\ 
&
=\det\left(\begin{array}{c|c}
\left(\left\la   v ^-_{y_i},w^- _{y_j}   \right\ra\right)_{1\leq i,j\leq n}
& \left(\left\la   v ^-_{y_i},w^+ _{x_j}   \right\ra\right)_{1\leq i,j\leq n} \\ \\
  \hline \\
   \left(\left\la h_{x_i} , \widehat g^{~-}_{y_j} \right\ra
 \right) _{1\leq i,j\leq n}
& \left(\left\la h_{x_i} ,   g^+_{x_j} \right\ra
 \right)_{1\leq i,j\leq n}
\end{array}\right)
\end{aligned}$$

$$=\det((v^-_{y_i})_{1\leq k,i \leq n}) \det((h_{x_i})_{1\leq k,j \leq n}) \\
   \det \left(\begin{array}{cc} 
(w^- _{y_j})_{1\leq j,k\leq n}&(w^+_{x_j})_{1\leq j,k\leq n}\\
(\widehat g^{~-}_{y_j})_{1\leq j,k\leq n}&(g^+_{x_j})_{1\leq j,k\leq n}
 \end{array}\right)
$$
\be\begin{aligned}
&=\det  \left(\Id-{\mathcal K}^{\beta}\bigr|_{[-\rho,-1]} \right)^{-1}
 \det  \left(  {\cal A}^{\beta,y_i-\beta}_{-\dt } (j \!-\!\rho\!-\!1)   \right)_{1\leq i,j\leq n}
\\&\hspace*{5cm}\times \det  \left(h_{x_i}(j\!-\!\rho\!-\!\dt\!-\!1)\right)_{1\leq i,j\leq n}~\det M_{yx}, 
\end{aligned}\label{p1}
\ee
with\footnote{Remember that $\widehat g_{y}$ also depends on $x$. Also we have that $M_y$ depends on $y$ and on ${\bf x}=(x_1,\ldots,x_n)$, while $M_x$ merely depends on $x$.}
\be\begin{aligned}
M_{yx}&:= \left( M_{y_1} ~\ldots~M_{y_{n}}  ~M_{x_1} ~\ldots~M_{x_{\rho+\dt}} \right)\\
&= \left( \!\! \left(\begin{array}{c}{\cal B}^{\beta,y_j-\beta}_{-\dt }(\lambda\!-\!\rho)  \Bigr|_{0\leq  \lb\leq n-1}\\ 
  \widehat g_{y_j}(\lb+\dt)\Bigr|_{0\leq  \lb\leq n-1}\end{array}\right)_{\!\!1\leq j\leq n}   \!\!\!
\left(\begin{array}{c  }{\cal B}^{\beta,x_j-\beta}_{n }(\lambda\!-\!\rho) \Bigr|_{0\leq  \lb\leq n-1}  \\   g_{x_j}(\lb\!-\! n)\Bigr|_{0\leq  \lb\leq n-1}\end{array} \right)_{\!\!\!1\leq j\leq n} \right) 
\label{Myx}\end{aligned} \ee
%
%
  %
 Setting
 $$
{\mathcal C}:= \left(\begin{array}{c | c | c  }
 C^{\circlearrowleft}_{\rho} & O & O\\
 \hline
   O &{\mathbb I}_{\rho+\dt}
   &O\\
   \hline
   O&O& {\mathbb I}_{\dt} 
 \end{array}\right),
 $$
The matrix $M_{yx}=\left( M_{y_1} ~\ldots~M_{y_{n}}  ~M_{x_1} ~\ldots~M_{x_{n}} \right)$ consists of $n=\rho+\dt$ columns depending on the $y_i$'s and on the $x_i$'s. So $M_{y}$ and $M_{x}$ are typical columns. Performing row operations does not change the determinant. Below we make a number of row operations on the $M_{y}$ and simultaneously on the $M_{x}$. The equality ${\simeq}$ means that the determinant  $M_{yx}$ remains unchanged upon using the new columns $M_{y}$ and $M_{x}$ below. The first equality uses (\ref{5B}) and (\ref{ghath});  equality $\stackrel{*}{\simeq}$ is obtained by adding the entries of the third group (of rows) to the ones of the first group and equality $\stackrel{**}{\simeq}$ by adding linear combinations of the entries of the second group to the first group. In $\stackrel{***}{\simeq}$ one is using Corollary \ref{C2.3} and in $\stackrel{****}{\simeq}$ one replaces the polynomials $\tilde H_{  \dt -i }(y+ \beta)$ by the polynomials $P_{  \dt -i }(y-\beta)$.
 Thus we have
 $$ \begin{aligned}
 M_y&={\mathcal C}\left(\begin{array}{l} 
    \Bigl( C^{\circlearrowleft ~-1}_{\rho}
 (\widetilde H_{n-1 }(y\!+\! \beta), \ldots,\widetilde H_{ \dt }(y\!+\! \beta))^\top\Bigr)_i  - \Phi_{\dt+i-1}(y\!-\!\beta)   
   \Bigr|_{1\leq i\leq \rho}
   \\  \\
  \tilde H_{  \dt -i }(y+ \beta) \Bigr|_{1\leq i\leq \dt}
\\ \\
\Phi_{ \dt+i-1}(y-\beta)+f_{i-1}({\bf x}-\beta,{y-\beta })\Big|_{1\leq i\leq \rho }
\\ \\
\Phi_{ \dt+i-1}(y-\beta)+f_{i-1}({\bf x}-\beta,{y -\beta})\Big|_{\rho+1\leq i\leq n } \end{array}\right)
  \end{aligned}$$
  $$\begin{aligned} 
  &\stackrel{(*)}{\simeq} {\mathcal C}\left(\begin{array}{l} 
    \Bigl( C^{\circlearrowleft ~-1}_{\rho}
 (\widetilde H_{\rho+\dt-1 }(y\!+\! \beta), \ldots,\widetilde H_{ \dt }(y\!+\! \beta))^\top\Bigr)_i  +f_{i-1}({\bf x}\!-\!\beta,{y\!-\!\beta })
   \Bigr|_{1\leq i\leq \rho}
   \\  \\
  \tilde H_{  \dt -i }(y+ \beta) \Bigr|_{1\leq i\leq \dt}
\\  
\ast 
\\ 
\ast
 \end{array}\right)
\\
  &\stackrel{(**)}{\simeq} 
  {\mathcal C}\left(\!\!\begin{array}{l} 
    \Bigl( C^{\circlearrowleft ~-1}_{\rho+\dt}
 (\widetilde H_{\rho+\dt-1 }(y\!+\! \beta), \ldots,\widetilde H_{ 0 }(y\!+\! \beta))^\top\Bigr)_{\dt+i} \! \!+f_{i-1}({\bf x}\!-\!\beta,{y \!-\!\beta})
   \Bigr|_{1\leq i\leq \rho}
   \\  \\
  \tilde H_{  \dt -i }(y+ \beta) \Bigr|_{1\leq i\leq \dt}
\\  
\ast
\\  
\ast
 \end{array}\!\!\!\right)
\end{aligned}$$
$$\begin{aligned}
 &\stackrel{(***)}{\simeq}
 {\mathcal C}\left(\begin{array}{l} 
     %
 P_{\dt+i-1}(y-\beta)+f_{i-1}({\bf x}-\beta,{y -\beta})
   \Bigr|_{1\leq i\leq \rho}
   \\  
  \tilde H_{  \dt -i }(y+ \beta) \Bigr|_{1\leq i\leq \dt}
\\  
\ast
\\  
\ast
 \end{array}\right)
 \end{aligned}$$
 $$\begin{aligned}
 &\stackrel{(****)}{\simeq}\mbox{Constant}\times
 {\mathcal C}\left(\begin{array}{l} 
     %
 P_{\dt+i-1}(y-\beta)+f_{i-1}({\bf x}-\beta,{y -\beta})
   \Bigr|_{1\leq i\leq \rho}
   \\  
  P_{  \dt -i }(y- \beta) \Bigr|_{1\leq i\leq \dt}
\\  
\ast
\\  
\ast
 \end{array}\right),
  \end{aligned}$$
%
%
with a constant of absolute value $=1$, and also we find
 $$ \begin{aligned}
 M_x&={\mathcal C}\left(\begin{array}{l} 
  - 
       \Phi_{\lb-\rho-\dt }(x-\beta)   \Bigr|_{0\leq \lb\leq \rho-1}\\ \\ 
  O \Bigr|_{1\leq i\leq \dt}
\\ \\
\Phi_{\lb-\rho-\dt}(x-\beta) \Big|_{0\leq \lb\leq \rho-1}
\\ \\
\Phi_{\lb-\rho-\dt}(x-\beta) \Big|_{\rho\leq \lb\leq n-1} \end{array}\right)
 \simeq   
  {\mathcal C}\left(\begin{array}{l} 
  O \Bigr|_{1\leq i\leq \dt}   \\ \\ 
  O \Bigr|_{1\leq i\leq \dt}
\\ \\
\Phi_{\lb-n}(x-\beta) \Big|_{0\leq \lb\leq \rho-1}
\\ \\
\Phi_{\lb-n}(x-\beta) \Big|_{\rho\leq \lb\leq n-1} \end{array}\right)
 .\end{aligned}$$
 Combining $M_y$ and $M_x$ in one matrix $M_{yx}$, one finds
\be 
 \begin{aligned}
 \det& M_{yx} 
 \\=&
 \mbox{\footnotesize{
$   
 \det{\mathcal C}\det \left( \begin{array}{l} 
  P_{ i }(y_j- \beta) \Bigr|_{0\leq i\leq \dt-1}
\\     %
 P_{\dt+i }(y_j\!-\!\beta)+f_{i }({\bf x}-\beta,{y_j\! -\!\beta})
    \Bigr|_{0\leq i\leq \rho-1}
   \\   \hline
   \\
\hspace*{2cm}\mbox{\huge$\ast$}
 \end{array} \vline
  \begin{array}{l} 
 \hspace*{2cm}\mbox{\large$O$}
\\ \\  \\ \hline  
\vspace*{.4cm}\Phi_{\lb-n }(x_j-\beta) \big|_{0\leq \lb\leq n-1}
 \end{array}  \right)  $
} 
}\\ =&(\det C^{\circlearrowleft}_{\rho})\det\left(\begin{array}{cc} 
     \Phi_{-\lb }(x_j-\beta)  
   \end{array}
  \right) _{1\leq \lb,j\leq n} \\
    &
\hspace*{2.1cm} \det\left(  
  \begin{array}{l} 
  P_{ i }(y_j- \beta) \Bigr|_{0\leq i\leq \dt-1}
\\     %
 P_{\dt+i }(y_j\!-\!\beta)+f_{i }({\bf x}-\beta,{y_j\! -\!\beta})
    \Bigr|_{0\leq i\leq \rho-1}
  \end{array}\right)_{1\leq j\leq n}
 \end{aligned}\label{Mx} \ee
Combining the calculations (\ref{Mx}) and (\ref{p1}), using $\det C^{\circlearrowleft}_{\rho}=1$ (see (\ref{Cinv})), Proposition \ref{PAB}, (\ref{Phi-Psi}) and (\ref{HermMatrix}) and taking into account the GUE-distribution (\ref{GUE}) and the volume (\ref{Bary1}) of the cones $\mbox{Vol}({\mathcal C}_{\bf x})$ and $\mbox{Vol}({\mathcal C}_{\bf y})$, one finds 
 \be\begin{aligned}
&  p^{\mbox{\tiny\rm TAC}}(-\dt,{\bf y}; n, {\bf x})
\\
&=\frac{2^{2n}}{(-1)^{n-1}}\det  \left(\Id-{\mathcal K}^{\beta} \right)^{-1}
 \det  \left(  {\cal A}^{\beta,y_i-\beta}_{-\dt } (j \!-\!\rho\!-\!1)   \right)_{1\leq i,j\leq n}
   \det\left(\begin{array}{cc} 
     \Phi_{-i }(x_j-\beta)  
   \end{array}
  \right) _{1\leq i,j\leq n}
 \end{aligned}
\ee
$$\times\det  \left(h_{x_i}(j\!-\!n\!-\!1)\right)_{1\leq i,j\leq n}
  \det\left(  
  \begin{array}{l} 
  P_{ i }(y_j- \beta) \Bigr|_{0\leq i\leq \dt-1}
\\     %
 P_{\dt+i }(y_j\!-\!\beta)+f_{i }({\bf x}-\beta,{y_j\! -\!\beta})
    \Bigr|_{0\leq i\leq \rho-1}
  \end{array}\right)_{1\leq j\leq n}
$$
$$\begin{aligned}
=&\frac{2^{2n}}{(-1)^{n-1}}\det  \left(\Id-{\mathcal K}^{\beta} \right)^{-1}\det\left(\frac{e^{-(y_i+\beta)^2}}{\sqrt{\pi}}
  \frac{H_{j-1 }(y_i+ \beta)}{2^{j  }}
\right)_{1\leq i,j \leq n}
\!\!\!\!\!\!\!\!\!\!\!\!\!\!\!%
 \\
 &\times\det\left(\frac{e^{-(x_i-\beta)^2}}{\sqrt{\pi}}
  \frac{H_{j-1   }(\beta-x_i )}{2^{j }}
\right)_{1\leq i,j\leq n}
\\
& \det \widetilde {\cal H}^{(\beta-x)}_{n} \det\left(  
  \begin{array}{l} 
  P_{ i }(y_j- \beta) \Bigr|_{0\leq i\leq \dt-1}
\\     %
 P_{\dt+i }(y_j\!-\!\beta)+f_{i }({\bf x}\!-\!\beta,{y_j\! -\!\beta})
    \Bigr|_{0\leq i\leq \rho-1}
  \end{array}\right)_{1\leq j\leq n}
%
\\ =&  \frac{ 2^{n(n-1)}}{ \pi^n\det  \left(\Id-{\mathcal K}^{\beta} \right)^{ }}\bigl(\Dt_{n}(x)\prod_{i=1}^{n}e^{-(x_i-\beta)^2}\bigr)\bigl(\Dt_{n}(y)\prod_{i=1}^{n}e^{-(y_i+\beta)^2}\bigr)   \Gamma_{\bf xy}
 \\=&
\frac{\prod_1^{n-1}( j!)^2}{\det  \left(\Id-{\mathcal K}^{\beta} \right)} ~\frac{\rho_n({\bf x}-\beta) ~\rho_n({\bf y}+\beta)}{\Dt_n( {\bf x})\Dt_n( {\bf y})}~\Gamma_{\bf xy}
\\=&
 {} ~\frac{\rho_n({\bf x}-\beta) ~\rho_n({\bf y}+\beta)}{\mbox{Vol}({\mathcal C}_{\bf x})\mbox{Vol}({\mathcal C}_{\bf y})      \det  \left(\Id-{\mathcal K}^{\beta} \right)}~\Gamma_{\bf xy}
 , \end{aligned}
  $$
 where $\Gamma_{\bf xy}$ is defined as: 
 \be\begin{aligned}
\Gamma_{\bf xy}:=& \frac { \det \widetilde {\cal H}^{(\beta-x)}_{n}
}{(-1)^{n-1}2^{n(n-1)}}
 \det\left(  
  \begin{array}{l} 
  P_{ i }(y_j- \beta) \Bigr|_{0\leq i\leq \dt-1}
\\     %
 P_{\dt+i }(y_j\!-\!\beta)+f_{i }({\bf x}\!-\!\beta,{y_j\! -\!\beta})
    \Bigr|_{0\leq i\leq \rho-1}
  \end{array}\right)_{1\leq j\leq n}.
  \end{aligned}\label{Gamma1}\ee
   This shows Theorem \ref{Th:JProb}, except for the expression (\ref{vol1}) for $\Gamma_{\bf xy}$. To do so, in order to find the expressions $f_k$, recall (\ref{f}) is a linear system of $n=\rho+ \dt$ equations (parametrized by $x_1,\dots, x_{n}$) in $n$ variables $ f_{0}({\bf x},{y }),\ldots , f_{n-1}({\bf x},{y })$; its solution is thus given by Cramer's rule; so,  
   for $1\leq k \leq \rho $, one has \underline{\em a first expression} for the $f_k$'s,
 \be \begin{aligned}
  & f_{k-1}({\bf x },{y  })=  -
   \det\left(\widetilde H_{ n-j }(  -x_i)\right)_{1\leq i,j \leq n}  ^{-1}
    \hspace*{1.5cm}
    \begin{array}{c}k
 \\
 \downarrow \end{array} \\
  & \times\det\left( \widetilde H_{ n -1}(  -x_i),~
   \ldots,~\widetilde H_{ n  -k+1}(  -x_i),  
   {\mathbb H}^{  \rho+2\dt }(2(y\!-\!x_i)),
   \widetilde H_{ n  -k-1}(  -x_i),~\ldots,\right.
   \\
   &\hspace*{10cm}\left. \widetilde H_{ 0}(  -x_i)\right)_{1\leq i\leq n}.
\end{aligned}
\label{lin1}\ee
Linear system (\ref{f}) can also be written as a matrix equation, using notation (\ref{HermMatrix}):
$$
\widetilde {\cal H}^{(-x)}_{n}
  \left( f_{k-1}({\bf x},y_j)  \right)_{1\leq k,j\leq n}=
  \left(-\BH^{ \rho+2\dt } (2(y_j -x_i) )\right)_{1\leq i,j\leq n}$$
 and so one finds, upon setting $f_i=0$ for $i\leq -1$, \underline{\em a second expression}
\be
\left(f_{k-\dt-1}({\bf x},y_j)\right)_{1\leq k,j\leq n}
=
\left( \begin{array}{c} O  \\ \\ 
\hline \\    
{\mathcal P}^{\mbox{\tiny rows}}_{[1,\rho]}\left[(\widetilde {\cal H}^{(-x)}_{n}) ^{-1}
  \left(-\BH^{ \rho+2\dt } (2(y_j -x_i) )\right)_{1\leq i,j\leq n}\right] \end{array}\right) \!\!\! \begin{array}{c}\updownarrow \dt \\ \\ \\    \updownarrow\rho\\ \end{array}\label{lin2}\ee
where ${\mathcal P}^{\mbox{\tiny rows}}_{[1,\rho]}[.]$ stands for the projection of a matrix onto the first $\rho$ rows.

So, using this second form of the solution (\ref{lin2}) and using in $\stackrel{*}{=}$ below equation (\ref{HP}) of Corollary \ref{C2.2}, one finds for $\Gamma_{\bf xy}$, defined in (\ref{Gamma1}),
$$\begin{aligned}
&(-1)^{n-1}2^{n(n-1)}\Gamma_{\bf xy} 
\\
 =&\det\widetilde {\cal H}^{(\beta-x)}_{n}
   \det\left[  \left(P_{ i-1 }(y_j- \beta) \right) _{1\leq i,j\leq n }~+~\left ( f_{i-\dt-1}({\bf x}-\beta,y_j-\beta)\right)_{1\leq i,j\leq n}\right]
 \\
 =&  \det\left[ \widetilde {\cal H}^{(\beta-x)}_{n}  \left(P_{ i-1 }(y_j- \beta) \right) _{1\leq i,j\leq n }~+~\widetilde {\cal H}^{(\beta-x)} \left ( f_{i-\dt-1}({\bf x}-\beta,y_j-\beta)\right)_{1\leq i,j\leq n}\right]
 \end{aligned}$$
 \be\begin{aligned} 
 \stackrel{*}{=}&  \det\left[   \left(\frac {(2(y_j- x_i))^{n-1}  }{ (n-1)!} \right) _{1\leq i,j\leq n }~\right.
 \\
 &    \left.+ \left(O   \Bigr |  {\mathcal P}^{\mbox{\tiny (columns)}}_{[\dt+1,n]} \widetilde {\mathcal H}_n^{(\beta-x)}\right) \left( \begin{array}{c} O  \\ \\ 
\hline \\    
{\mathcal P}^{ \mbox{\tiny (rows)} }_{[1,\rho]}\left[(\widetilde {\cal H}^{(\beta-x)}_{n}) ^{-1}
  \left(-\BH^{ \rho+2\dt } (  2(y_j -x_i)  )\right)_{1\leq i,j\leq n}\right] \end{array}\right) 
  \right]_{ }
\\
 \stackrel{*}{=}&  \det\left[    \frac {(2(y_j- x_i))^{n-1}  }{ (n-1)!}   ~
 -\sum_{\gamma=1}^{n} \left(\sum_{\al=1}^\rho \widetilde H_{\rho-\al}(\beta -x_i)({\mathcal K}^{(\beta -x)}_n)_{\al,\gamma} \right){\mathbb H}^{n+\dt}(2(y_j-x_\gamma))\right]
 \\
 =& 2^{n(n-1)} \det\left[    \frac { (y_j- x_i) ^{n-1}  }{ (n-1)!}   ~
 -\sum_{k=1}^{n} {\mathcal L}_{ik} 
 {\mathbb H}^{n+\dt}( y_j-x_k )\right]_{1\leq i,j\leq n}\end{aligned}\label{Gamma2}\ee 
using the matrix identity (\ref{HP}), and upon defining the matrices 
$$
{\mathcal K}^{( -x)}_n:= (\widetilde {\cal H}^{( -x)}_{n}) ^{-1}~~
\mbox{and}~~{\cal L}_{ik}:=2^\dt \sum_{\al=1}^\rho  
\widetilde H_{\rho-\al}(\beta-x_i)  
({\mathcal K}^{( \beta-x)}_n)_{\al,k}.$$
Indeed the $(i,j)$th entry of the second matrix (under the determinant) in $\stackrel{*}{=}$ (formula (\ref{Gamma2}) above) can be written as
 $$\begin{aligned}
\sum_{\al=1}^\rho &
\widetilde H_{\rho-\al}(\beta-x_i)
   \sum_{k=1}^{n}  
({\mathcal K}^{( \beta-x)}_n)_{\al,k} {\mathbb H}^{\rho+2\dt}(2(y_j-x_k))
 \\&=\frac{1}{2^{\dt}}\sum_{k=1}^{n} {\mathcal L}_{ik} 
 {\mathbb H}^{\rho+2\dt}(2(y_j-x_k))
  = {2^{n-1}}\sum_{k=1}^{n} {\mathcal L}_{ik} 
 {\mathbb H}^{\rho+2\dt}( y_j-x_k )
 .\end{aligned}$$
One then checks that the matrix ${\mathcal L}$ has the form stated in (\ref{vol2}). This ends the proof of Theorem \ref{Th:JProb}.\qed

\bigbreak
\noindent {\em Remark:} Notice that ${\mathcal L}=\Id$ for $\dt=0$ and for $\dt\geq 1$, one also checks that
 $$\begin{aligned}
 \sum_{k=1}^{n} {\mathcal L}_{ik} 
\frac{( y\!-\!x_k )^{ n+\dt-1 }}{( n+\dt-1)!}-\frac{(  y\!-\!x_i )^{ n-1 }}{( n-1)!}
- c_{\rho,\dt} (y\!-\!x_i)^{\dt-1}R^{(\dt)}(x_i\!-\!\beta)=\sum_0^{\dt-2} S_i(x)y^i ,
  \end{aligned} $$
  where $S_i(x)$ are polynomials of total degree $n-i-1$ in the $x_i$ with coefficients which are symmetric functions of the $x_j$'s with $j\neq i$; in particular,
 $$
 c_{\rho,\dt}=\frac {(-1)^n}{n!} \mbox{   for } \dt=1
 .$$
 One thus finds
   \be \begin{aligned}
&\Gamma_{\bf xy} = (-1)^{n-1}\det
\\
& \left[ \sum_{k=1}^{n} {\mathcal L}_{ik} 
\frac{( y_j-x_k )^{ n+\dt-1 }}{( n+\dt-1)!}\Id_{y_j<x_k}- c_{\rho,\dt} (y_j\!-\!x_i)^{\dt-1}R^{(\dt)}(x_i\!-\!\beta)-\sum_0^{\dt-2} S_i(x)y_j^i \right]_{1\leq i,j\leq n}.
 \label{remark}\end{aligned} \ee
In particular for $\dt=0$, one has:
\be
\Gamma_{\bf xy} =
\det\left[\frac{(x_i-y_j)^{n-1}}{(n-1)!} \Id _{y_j<x_i}\right]_{1\leq i,j\leq n}.
\label{86'}\ee
Another way of obtaining this expression is to start directly from the final expression in (\ref{Gamma2}), namely
 for $n=\rho$ or $\dt=0$, one has
$$\begin{aligned}
& (-1)^{n-1}2^{n(n-1)} \Gamma_{\bf xy} 
\\ &=  \det\left[   \left(\frac {(2(y_j\!-\! x_i))^{n-1}  }{ (n-1)!}\right) _{1\leq i,j\leq n }+
 \widetilde {\cal H}^{(\beta-x)}_{n} ( \widetilde {\cal H}^{(\beta-x)}_{n} )^{-1}
 \left(-\BH^{ \rho  } (  2(y_j\! -\! x_i)  )\right)_{1\leq i,j\leq n} 
 \right]
 \\
 &=\det\left[   \left(\frac {(2(y_j- x_i))^{n-1}  }{ (n-1)!}\right) _{1\leq i,j\leq n }-
 \left(\frac{(2(y_j -x_i))^{n -1}}{(n -1)!}\Id _{y_j\geq x_i}\right)_{1\leq i,j\leq n} 
 \right]
 \\
 &=(-1)^{n-1}2^{n(n-1)} \det\left[   \left(\frac {( x_i-y_j ) ^{n-1}  }{ (n-1)!}\Id _{y_j< x_i}\right) _{1\leq i,j\leq n } \right].
\end{aligned}$$
Finally, applying formula (\ref{remark}) for $\dt=1$ yields
  \be\begin{aligned}
\Gamma_{\bf xy}&=
-\det\left[
\frac {1}{n!} \left( \sum_{k=1}^{n} {\mathcal L}_{ik} 
 {( x_k-y_j )^{ n  }} \Id_{y_j<x_k}-  { } R'(x_i-\beta)\right)
 \right]_{1\leq i,j\leq n},
 \end{aligned}\label{86''}\ee
with the polynomial $R$ as in (\ref{R}).%

\section{The Volume of the double Cone}



Keeping in mind notations (\ref{spectra2}), (\ref{spectra3}), the symbol $\curlyeqprec$ in (\ref{xy-interlace}) and footnote 4, we give in this section formulas for the volume of the double cone (\ref{cone}); as illustrated in Figure 3, this cone is expressed as follows by Theorem \ref{Th1.1}, for given $ {\bf x},{\bf y}\in \BR^{n}$, with $n\geq \rho$:
   \be
   {\mathcal C}^{(n)}_{{\bf x},{\bf y}} =\left\{\begin{array}{l}
   {\bf y}  \succ {\bf y}^{(n-1)} \succ\ldots \succ   {\bf y}^{(\rho)}
   ~\mbox{and}~  {\bf x} \succ {\bf x}^{(n-1)}\succ \ldots \succ {\bf x}^{(\rho)} 
   \\
    %
   {\bf y}^{(\rho)}    
   \curlyeqprec   {\bf z}^{(1)} 
\curlyeqprec\ldots \curlyeqprec 
  {\bf z}^{(\rho-1)}
\curlyeqprec 
 {\bf x}^{(\rho)} 
 \\ 
 \mbox{for any $ {\bf x}^{(i)},~{\bf y}^{(i)}\in \BR^i$  and 
$ {\bf z}^{(j)}\in \BR^\rho$}
 \end{array} \right\} \subset \BR^{n(n-1)} .\label{cone7}\ee
Also remember the definition (\ref{Gamma1}) and (\ref{Gamma2}) of $\Gamma^{(n)}_{\bf xy}$ in section 6: it is given for  {$n=\rho+\dt\geq\rho$} by two alternative formulas:
\be\begin{aligned}
\Gamma^{(n)}_{{\bf x} {\bf y}} 
&=  
  \frac{\det(\widetilde {\cal H}^{(\beta -x)}_{n})}{(-1)^{n-1}2^{n(n-1)} } 
 \det\left(  
  \begin{array}{l} 
  P_{ i }(y_j- \beta) \Bigr|_{0\leq i\leq \dt-1}
\\     %
 P_{\dt+i }(y_j\!-\!\beta)+f_{i }({\bf x}\!-\!\beta,{y_j\! -\!\beta})
    \Bigr|_{0\leq i\leq \rho-1}
  \end{array}\right)_{1\leq j\leq n}
  \\
  &=  (-1)^{n-1} \det\left[    \frac { (y_j- x_i) ^{n-1}  }{ (n-1)!}   ~
 -\sum_{k=1}^{n} {\mathcal L}_{ik} 
 {\mathbb H}^{n+\dt}( y_j-x_k )\right]_{1\leq i,j\leq n} ,
\end{aligned}\label{vol1'}\ee
 with the matrix $\widetilde {\cal H}^{(\beta -x)}_{n} $ as in (\ref{HermMatrix}), with the $P_i$'s as in (\ref{herm1}) and the $f_i$ as in (\ref{lin1}) and where ${\mathcal L}$ is the $n\times n$ matrix defined in (\ref{vol2}).

\begin{proposition}\label{volprop1}
The volume of the double cone $ {\mathcal C}^{(n)} _{{\bf x},{\bf y}}  $ is given for 
\newline\noindent \underline{(i) $n=\rho$} 
\be
\mbox{Vol}({\mathcal C}^{(\rho)}_{{\bf xy}})
=\det \left[
\frac {( x_i-y_j )^{n-1}} {(n-1)!}\Id_{x_i> y_j}   
  \right]_{1\leq i,j\leq \rho}
=\Gamma^{(\rho)}_{{\bf x} {\bf y}} \label{Va}\ee
\newline\noindent \underline{(ii) $n=\rho+\dt\geq\rho$}, 
$$
\mbox{Vol}({\mathcal C}^{(n)}_{{\bf xy}})=\Gamma^{(n)}_{\bf xy}
.$$
%
%
\end{proposition}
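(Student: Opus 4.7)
I would handle Case (i) as the base of an induction that delivers Case (ii).

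For Case (i) with $n = \rho$, I would interpret ${\mathcal C}^{(\rho)}_{\bf xy}$ as a continuous non-intersecting path ensemble with ``push'' dynamics. The intertwinings ${\bf z}^{(t)} \curlyeqprec {\bf z}^{(t+1)}$ for $t = 0, 1, \ldots, \rho - 1$, with ${\bf z}^{(0)} = {\bf y}$ and ${\bf z}^{(\rho)} = {\bf x}$, say coordinate-wise that each of $\rho$ particles traces out a monotone non-decreasing path from $y_j$ to $x_{\sigma(j)}$ (for some permutation $\sigma$) through the $\rho - 1$ intermediate levels, with adjacent particles constrained by $z^{(t+1)}_i \leq z^{(t)}_{i+1}$ so that they cannot overtake one another. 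The continuous Karlin--McGregor / Lindstr\"om--Gessel--Viennot lemma then yields
\[
\mbox{Vol}({\mathcal C}^{(\rho)}_{\bf xy}) = \det\bigl[K(y_j, x_i)\bigr]_{1 \leq i,j \leq \rho},
\]
where the single-particle weight $K(y,x)$ is the volume of monotone chains $y \leq s_1 \leq \cdots \leq s_{\rho-1} \leq x$, namely $(x - y)^{\rho-1}/(\rho-1)!$ when $x > y$ and $0$ otherwise. This is formula \ref{Va}, and identity \ref{86'} from Section 6 immediately identifies this determinant with $\Gamma^{(\rho)}_{\bf xy}$.

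For Case (ii) with $n = \rho + \dt > \rho$, I would induct on $\dt$, peeling off the outermost interlacing layer on each side of the overlap to obtain the recursion
\[
\mbox{Vol}({\mathcal C}^{(n+1)}_{\bf xy}) = \int \mbox{Vol}({\mathcal C}^{(n)}_{{\bf x}^{(n)}, {\bf y}^{(n)}})\, \Id_{{\bf x}^{(n)} \prec {\bf x}}\, \Id_{{\bf y}^{(n)} \prec {\bf y}}\, d{\bf x}^{(n)}\, d{\bf y}^{(n)}.
\]
The inductive hypothesis replaces the integrand by $\Gamma^{(n)}_{{\bf x}^{(n)}, {\bf y}^{(n)}}$, reducing the problem to showing that $\Gamma^{(n+1)}$ itself satisfies the same integral recursion. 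Using the first form of $\Gamma^{(n)}$ in \ref{vol1'}, the ${\bf y}^{(n)}$-integration proceeds column-by-column (each column of the displayed $n \times n$ matrix is a function of a single $y^{(n)}_j$), because integrating a polynomial $P_k$ over an interlaced sub-interval of $\BR$ raises its degree by one; the ${\bf x}^{(n)}$-integration is handled in parallel using the Hermite matrix identities of Corollary \ref{C2.2}.

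The main obstacle is the ${\bf x}^{(n)}$-integration, because the auxiliary functions $f_i({\bf x}^{(n)} - \beta, y - \beta)$ defined by the linear system \ref{f} depend globally on ${\bf x}^{(n)}$ through the Heaviside term $\BH^{\rho + 2\dt}(2(y - x^{(n)}_i))$. One must verify that integrating out the outer interlacing promotes the Heaviside exponent from $\rho + 2\dt$ to $\rho + 2(\dt+1)$, matching the corresponding Heaviside structure in $\Gamma^{(n+1)}$. An alternative that avoids the induction is to apply LGV directly to the entire cone, producing an $n \times n$ determinant that must then be brought into the form \ref{vol1'} by column operations using $(\widetilde{\mathcal H}^{(\beta - x)}_n)^{-1}$, at which point the matrix ${\mathcal L}$ appears naturally; the addition formulas of Lemma \ref{L2.1} supply the algebraic glue in either approach.
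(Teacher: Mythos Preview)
Your overall architecture matches the paper's: a Lindstr\"om--Gessel--Viennot argument for the base case $n=\rho$, followed by induction on $n$ via the integral recursion
\[
\prod_{i=1}^{n}\int_{z_i}^{z_{i+1}}dx_i\int_{w_i}^{w_{i+1}}dy_i\,\Gamma^{(n)}_{\bf xy}=\Gamma^{(n+1)}_{\bf zw}.
\]
For Case (i) your continuous LGV is slightly more direct than the paper's discrete-to-continuous limit, but lands on the same determinant and the same appeal to \eqref{86'}.

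The gap is in the inductive step. You correctly flag the obstacle---the $f_i({\bf x}-\beta,y-\beta)$ in the first form of \eqref{vol1'} depend on \emph{all} of $x_1,\ldots,x_n$ through the linear system \eqref{f}, so the $x$-integration cannot be done row-by-row on the determinant as written---but you do not supply the mechanism that resolves it. The paper's device is a multilinear expansion (Lemma~\ref{vollemma6.3}) combined with a ``higher Fay identity'' \eqref{fay}, which is a Pl\"ucker relation collapsing $\det\bigl[{\mathcal V}_{X_{i'_r}\curvearrowright Y_{j_s}}\tau/\tau\bigr]_{r,s}$ to a single product $\tau^{-1}\prod_r{\mathcal V}_{X_{i'_r}\curvearrowright Y_{j_r}}\tau$. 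This rewrites $\Gamma^{(n)}_{\bf xy}$ as a sum (Lemma~\ref{vollemma2}) in which each term is a product of two determinants: one whose columns are single-variable functions of the $y_j$, and one whose rows are single-variable functions of the $x_i$ (the columns $X_k$ and ${\mathbb H}_{y_{j_r}}$ in \eqref{vol4c}). Only after this factorization do the $x$- and $y$-integrations decouple and reduce to the elementary identities you mention (raising $P_k$, $\widetilde H_k$ by one degree and $\BH^{\rho+2\dt}$ by two). The Hermite addition formulas of Corollary~\ref{C2.2} alone do not untangle the global $x$-dependence; the Pl\"ucker/Fay step is the missing ingredient.

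Your alternative of applying LGV directly to the full cone is not pursued in the paper and would face its own difficulty: outside the overlap the two interlacing cones are coupled only through the $\rho$ inequalities $y_i^{(i)}\le x_1^{(\rho-i+1)}$, which do not fit the standard non-intersecting path picture without further work.
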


\bigbreak 
\begin{corollary}\label{volcor1} In particular, for $n=\rho+1$ (i.e., $\dt=1$), 
  $$\begin{aligned}
\mbox{Vol}({\mathcal C}^{(\rho+1)}_{\bf xy})&=-
\det\left[
\frac {1}{n!} \left( \sum_{k=1}^{n} {\mathcal L}_{ik}\bigr|_{\beta=0} 
 {( x_k-y_j  )^{ n  }} \Id_{x_k>y_j }-  { } R'(x_i)\right)
 \right]_{1\leq i,j\leq n}.
 \end{aligned}$$
\end{corollary}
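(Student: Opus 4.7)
The plan is to combine the second part of Proposition~\ref{volprop1} with the explicit $\dt = 1$ expression for $\Gamma^{(n)}_{\bf xy}$ already recorded in formula (\ref{86''}) of the Remark following Theorem~\ref{Th:JProb}. Specializing Proposition~\ref{volprop1}(ii) to $n = \rho + 1$ (so $\dt = 1$) gives
\[
\mbox{Vol}({\mathcal C}^{(\rho+1)}_{\bf xy}) \;=\; \Gamma^{(\rho+1)}_{\bf xy},
\]
and for $\dt = 1$ the determinantal expression (\ref{86''}) reads
\[
\Gamma^{(\rho+1)}_{\bf xy} = -\det\!\left[\frac{1}{n!}\Bigl(\sum_{k=1}^{n} {\mathcal L}_{ik}\,(x_k - y_j)^n \,\Id_{y_j < x_k} \;-\; R'(x_i - \beta)\Bigr)\right]_{1\leq i,j\leq n}.
\]

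The only conceptual step is to observe that the cone ${\mathcal C}^{(\rho+1)}_{\bf xy}$ defined in (\ref{cone7}) is specified purely by interlacing inequalities among ${\bf x}$, ${\bf y}$, and the intermediate vectors, with no parameter $\beta$ entering its definition. Hence its Euclidean volume is a function of $({\bf x}, {\bf y})$ alone. Consequently the right-hand side above, although both ${\mathcal L}$ and $R'(z-\beta)$ depend on $\beta$, must in fact be $\beta$-independent. We may therefore evaluate the formula at the convenient value $\beta = 0$, which replaces ${\mathcal L}_{ik}$ by ${\mathcal L}_{ik}\bigr|_{\beta = 0}$ and $R'(x_i - \beta)$ by $R'(x_i)$; after rewriting $\Id_{y_j < x_k}$ as the equivalent $\Id_{x_k > y_j}$, this yields exactly the displayed formula of the corollary.

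There is no substantive obstacle in this argument: once Proposition~\ref{volprop1}(ii) is available, the corollary amounts to a direct specialization to $\dt = 1$ combined with a convenient choice of the free parameter $\beta$. The only mild subtlety is the $\beta$-independence just invoked, which is guaranteed a priori by the geometric meaning of the left-hand side as a Euclidean volume; alternatively it can be verified a posteriori by differentiating the determinant on the right-hand side in $\beta$ and checking that the result vanishes identically, using that the $\beta$-dependence of ${\mathcal L}_{ik}$ and of $R'(x_i-\beta)$ is coupled through the polynomials $R(z)$ and $\hat R_{ij}(z)$ of (\ref{R}).
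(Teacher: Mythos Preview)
Your proof is correct and follows essentially the same approach as the paper: combine Proposition~\ref{volprop1}(ii) with the $\dt=1$ formula (\ref{86''}) from the Remark after Theorem~\ref{Th:JProb}. You are slightly more explicit than the paper in justifying the specialization $\beta=0$ via the geometric $\beta$-independence of the volume, but this is exactly the reasoning the paper invokes at the end of the proof of Proposition~\ref{volprop1}.
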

 
In section 6 it was already shown that the two expressions  (\ref{vol1'}) for $\Gamma_{\bf xy}$ are equal. So it remains to show they equal the volume $\mbox{Vol}({\mathcal C}_{\bf xy})$. Let us begin by showing this fact for $n=\rho$; i.e., for $\dt=0$.

\medbreak

\noindent {\em Proof of Proposition \ref{volprop1} for $n=\rho$}:
 For the sake of the proof it seems more convenient to reverse the order; so, we set $y_1>\ldots>y_\rho$ and $x_1>\ldots>x_\rho$, all points in $\BZ$. 
 One has the following one-to-one correspondence:
\be\begin{aligned}
 \left\{\begin{array}{l}\mbox{up-right nonintersecting}\\
\mbox{ paths from the points  }\\ \mbox{$y_i$ to $x_i$, through the}\\ 
	\mbox{levels $u=0,\ldots, \rho$ }\end{array}\right\} 
= \left\{\begin{array}{l}\mbox{$\rho+1$ sets of $\rho$ dots, intertwining}\\
\mbox{the dots $y_i$'s at level $u=0$ 
and}\\
 \mbox{the dots $x_i$'s at level $u=\rho$}
 \end{array} \!\! \right\}
.\end{aligned}\label{path_dots}\ee
The precise correspondence is given by putting a dot to the left of each horizontal stretch belonging to the up-right paths and at each level $0\leq u
\leq \rho-1$. 
Thus the number of intertwining sets of $\rho$ points or, in other terms, the volume ${\mathbb V}_{\bf xy}$, equals the number of nonintersecting paths from the $y_i$'s to the $x_i$'s; the transitions from the level $u=\rho-1$ to the final level $u=\rho$ consist of vertical stretches. 
To count the number of such paths, we will first assign the weight $z_\ell$ to each right-step along the horizontal level $\ell$ and $1$ to each vertical step; then the total weight of all up-right path from $x$ to $y\in \BZ$ equals 
$$
h_r(z)= \sum_{\sum_j i_j =r} z_1^{i_1}z_2^{i_2}\ldots ~\mbox{, with $r=x-y$};
$$
its generating function is given by $\sum_{r\geq 0}h_r(z)\al^r=\prod_{i\geq 1}(1- \al z_i)^{-1}$. The Gessel-Viennot formula tells us that the total weight of all nonintersecting up-right paths from the $x_i$ to $y_i$ equals the following determinant which also defines the skew-Schur polynomials  
 for partitions $\lb$ and $\mu$ defined from the $x_i$ and $y_i$'s:
  \be\det\left(h_{x_i-y_j}(z)\right)={\bf s}_{\lb/\mu}(z)\mbox{   for $\lb_i =x_i+i-\rho$ and $\mu_i=y_i+i-\rho$}.
\label{GV}\ee
To actually count the number of paths above, assign the weight $z_i=1$ to horizontal stretches as well. Thus, setting $$z=1^{\rho}=(\underbrace{1,\ldots,1}_{\rho},0,0,\ldots), $$ one obtains by means of the generating function, that
%
$$\begin{aligned}
\sum_{r\geq 0}h_r(z)\al^r \Bigr|_{z=1^\rho}&=\prod_{i}e^{-\log (1-\al z_i)}
 \Bigr|_{z=1^\rho}=\left( \frac 1{1-\al}\right)^\rho =\sum_{r\geq 0} \left( \begin{array}{c} r+ \rho-1\\ \rho-1 \end{array} \right) \al^r
 .
\end{aligned}
$$
So we conclude that 
\be
h_r(z)=\left(\begin{array}{c} r+\rho-1  \\ \rho-1
\end{array}\right).\label{h}\ee
Combining (\ref{path_dots}), (\ref{GV}) and (\ref{h}), the volume equals
$$
{\mathbb V}_{xy}=\det \left[   \left(\begin{array}{c} x_i-y_j+\rho-1  \\ \rho-1
\end{array}\right)\Id_{x_i\geq y_j}\right]_{1\leq i,j\leq \rho}
.$$
Considering now the sets $y_1>\ldots>y_\rho$ and $x_1>\ldots>x_\rho$ of points in $\BR$ at levels $u=0$ and $u=\rho$, one finds  
$$
\mbox{Vol}({\mathcal C}^{(\rho)}_{\bf x y})=\lim_{t\to \infty}\frac{{\mathbb V}_{tx,ty}}{t^{\rho(\rho-1}} =\det \left[
\frac {(x_i-y_j)^{\rho-1}} {(\rho-1)!}\Id_{x_i> y_j}   
  \right]_{1\leq i,j\leq \rho}  =  
  \Gamma_{\bf xy},
    $$
establishing formula (\ref{Va}), after changing back the order of the $x_i$ and $y_i$'s. 
 Notice $\Gamma_{{\bf x}{\bf y}}$ is $\beta$-independent. 
This establishes Proposition \ref{volprop1} for $n=\rho$.\qed



   \noindent  The rest of the proof of formula (\ref{vol1'}) will be done by induction on $n$. The following shorthand notation will be used in the statement and the proof of the next Lemmas.
\be\begin{aligned}
 &  X_j := \mbox{ column } (\widetilde H_{n-j}(-x_i))_{1\leq i\leq n}
 \\
 &  {\mathbb H}_{y } :=\mbox{ column }
 \left( {\mathbb H}^{  \rho+2\dt }_{y,x_i}\right)_{1\leq i\leq n}:=\mbox{ column }
 \left(- {\mathbb H}^{  \rho+2\dt }( 2(y-x_i) )\right)_{1\leq i\leq n}
 \\
   &\tau_n:=\det(X_1,\ldots,X_n)=\det (\widetilde {\cal H}_{n} ^{( -x)})=c_n\Dt_n(x)\\
& {\mathcal V}_{_{X_k  \curvearrowright Y }}
\tau_n :=  \mbox{  replacing row $X_k$ by a row $Y$ in   }\tau_n=\det(X_1,\ldots,X_n)
 \\
 & (j,k) := \sum_{i=1}^\ell(k_i+j_i)+\ell \dt
\end{aligned}\label{vol3}\ee
%
 
 
 \begin{lemma}\label{vollemma6.3}
 
 Given integers $0\leq \rho, \dt $ with $n=\rho+\dt$, and $4n$ column-vectors $C^+_j,~C^-_j,~X_j,~Y_j\in \BR^n$, for $1\leq j\leq n$, with
 $$
 C^-_j=\left(\begin{array}{c}0\\ \vdots \\ 0 \\\hline \\ C^-_{\dt+1,j}\\
 \vdots \\ C^-_{n,j}\end{array}\right)
{ {  \left. \begin{array}{c}\\ \\ \\ \end{array}\right\} \dt } \atop \left.    {\begin{array}{c}\\ \\ \\ \\  \end{array}}  \right\}\rho}   
 $$
 with ($1\leq i\leq \rho$)
 $$
 C^-_{\dt+i,j}=\frac 1{\tau}{\mathcal V}_{_{X_i  \curvearrowright Y_j }}\tau
, \mbox{   with~~   }\left\{ \begin{aligned}\tau&=\det (X_1,\ldots,X_n)\\
{\mathcal V}_{_{X_i  \curvearrowright Y_j }}\tau&=
\det (X_1,\ldots,X_{i-1}, Y_j, X_i,\ldots,X_n)
\end{aligned}\right. .$$
 Then 
 \be\begin{aligned}
 \det  (C^+_1&+C^-_1,\ldots,C^+_n+C^-_n ) 
 \\&=
  \frac 1 \tau\sum_{\ell=0}^\rho\sum_{_{  
      \{\hat j_1<\ldots<\hat j_{n-\ell}\} \cup \{j_1<\ldots<j_\ell\} 
   = (1,\ldots,n) 
    }\atop {\{ \hat i_1=1,\ldots,\hat i_\dt=\dt<\hat i_{\dt+1}<\ldots   <\hat i_{n-\ell}\} \cup \{\dt+1\leq \dt+ i'_1<\ldots<\dt+i'_\ell\} 
   = (1,\ldots,n)}}  \!\!\! (-1)^{(i',j) }
   \\&\hspace*{1cm}\times\det( C^+_{\hat i_r ,\hat j_s})_{1\leq r,s\leq n-\ell } 
      \prod _{r=1}^\ell 
 {\mathcal V}_{X_{i'_r   }\curvearrowright {Y}_{_ {j_r}  }  }\tau    
 \end{aligned}\label{vol4a}\ee

 \end{lemma}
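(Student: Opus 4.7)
The argument will proceed in three natural steps: multilinearity of the determinant in its columns, a Laplace expansion that exploits the block structure of the $C^-_j$, and a Cramer-type identification of the resulting minor with an iterated $\mathcal{V}$-replacement.

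First, by multilinearity of the determinant I expand
\begin{equation*}
\det(C^+_1 + C^-_1, \ldots, C^+_n + C^-_n) = \sum_{S \subseteq \{1,\ldots,n\}} \det(D^S_1, \ldots, D^S_n),
\end{equation*}
where $D^S_j = C^-_j$ for $j \in S$ and $D^S_j = C^+_j$ otherwise. Writing $S = \{j_1 < \cdots < j_\ell\}$ with complement $\{\hat j_1 < \cdots < \hat j_{n-\ell}\}$ produces the outer double sum over $\ell$ and over ordered pairs $(\hat j, j)$ on the right-hand side of (\ref{vol4a}). Fixing such an $S$, I next perform a Laplace expansion along the $\ell$ columns indexed by $S$. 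Since each $C^-_j$ has vanishing entries in rows $1, \ldots, \delta$, only those $\ell\times\ell$ minors whose row set lies in $\{\delta+1, \ldots, n\}$ survive. Writing these rows as $\{\delta + i'_1, \ldots, \delta + i'_\ell\}$ with $1 \leq i'_1 < \cdots < i'_\ell \leq \rho$, the complementary row set is automatically $\{1,\ldots,\delta\} \cup \{\delta + i : i \in \{1,\ldots,\rho\} \setminus \{i'_r\}\}$, which is exactly $\{\hat i_1, \ldots, \hat i_{n-\ell}\}$ with $\hat i_r = r$ for $1 \leq r \leq \delta$; this also forces $\ell \leq \rho$. The expansion contributes the sign $(-1)^{\sum_r(\delta + i'_r) + \sum_r j_r} = (-1)^{(i', j)}$ of (\ref{vol3}), together with the complementary minor $\det(C^+_{\hat i_r, \hat j_s})_{1 \leq r, s \leq n-\ell}$.

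Finally, it remains to identify the $\ell \times \ell$ minor
\begin{equation*}
\Delta := \det(C^-_{\delta + i'_r, j_s})_{1 \leq r, s \leq \ell} = \tau^{-\ell}\,\det\bigl(\mathcal{V}_{X_{i'_r} \curvearrowright Y_{j_s}} \tau\bigr)_{r,s}
\end{equation*}
with the quantity $\tau^{-1}\prod_{r=1}^{\ell}\mathcal{V}_{X_{i'_r} \curvearrowright Y_{j_r}}\tau$ that appears in (\ref{vol4a}), read as the iterated multi-replacement determinant obtained from $\tau$ by simultaneously swapping $X_{i'_r}$ for $Y_{j_r}$ for all $r$. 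Writing each $Y_j$ in the basis $(X_1, \ldots, X_n)$ as $Y_j = \sum_i c_{i,j}\, X_i$, Cramer's rule gives $\mathcal{V}_{X_i \curvearrowright Y_j}\tau = c_{i,j}\,\tau$, so $\Delta = \det(c_{i'_r, j_s})_{r,s}$. On the other hand, expanding each $Y_{j_r}$ inside the multi-replacement determinant and noting that only permutations $a_r = i'_{\sigma(r)}$ contribute, the $\operatorname{sgn}(\sigma)\,\tau$ factors reassemble into $\tau \cdot \det(c_{i'_r, j_s})_{r,s} = \tau\,\Delta$. Dividing by $\tau$ matches the overall prefactor $\tau^{-1}$ in (\ref{vol4a}) and closes the identity.

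The main delicacy is the sign bookkeeping across the Laplace expansion and the multi-replacement step; once the signs line up with the convention $(i',j) = \sum_r(i'_r + j_r) + \ell\delta$ of (\ref{vol3}), everything assembles directly from multilinearity, the zero-block constraint, and the single Cramer identity $\mathcal{V}_{X_i \curvearrowright Y_j}\tau = c_{i,j}\tau$.
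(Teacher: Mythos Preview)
Your proof is correct and follows essentially the same three-step structure as the paper: multilinear expansion over choices of $C^+$ versus $C^-$ in each column, a Laplace expansion exploiting the vanishing of the first $\delta$ rows of each $C^-_j$ (which forces the row set into $\{\delta+1,\ldots,n\}$ and hence $\ell\le\rho$), and finally the identification of the $\ell\times\ell$ minor $\det(C^-_{\delta+i'_r,j_s})$ with $\tau^{-1}$ times the multi-replacement determinant.

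The one genuine difference is in that last step. The paper names the identity
\[
\det\!\left(\frac{\mathcal V_{X_{i'_r}\curvearrowright Y_{j_s}}\tau}{\tau}\right)_{1\le r,s\le\ell}
=\frac{1}{\tau}\prod_{r=1}^{\ell}\mathcal V_{X_{i'_r}\curvearrowright Y_{j_r}}\tau
\]
a ``higher Fay identity'' and proves it by induction on $\ell$, reducing the inductive step to a Pl\"ucker relation in the Grassmannian. Your argument instead expands each $Y_j$ in the basis $(X_1,\ldots,X_n)$, reads off $\mathcal V_{X_i\curvearrowright Y_j}\tau=c_{i,j}\tau$ from Cramer's rule, and then computes both sides directly as $\tau\det(c_{i'_r,j_s})$. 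This is more elementary and self-contained than the Pl\"ucker route; the paper's phrasing, on the other hand, situates the identity within $\tau$-function theory and makes the connection to integrable systems explicit. Both arguments implicitly use $\tau\ne 0$ (your basis expansion requires it, the paper divides by $\tau$); the polynomial identity then extends by density.
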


   \begin{proof}
   Setting $$\Pi_{n,\ell}= \left\{\mbox{$\mbox{\bf $  \vr$} =(\vr_1,\ldots,\vr_n)$, permutations of  
                                                       $ (\underbrace{+,\ldots,+}_{n-\ell},\underbrace{-,\ldots,-}_{\ell})$}\right\}   
                                                      ,$$
one checks
 $$\begin{aligned}
 & \det\left[   C^+_1+  C^-_1,\ldots,   C^+_n+  C^-_n       \right]
 \\
 &=\sum_{\ell=0}^n \sum_{\vr\in \Pi_{n,\ell}}\det(C^{\vr_1}_1,\ldots,  C^{\vr_n}_n)
 \\& =\sum_{\ell=0}^n\sum_{_{  
      \{\hat j_1<\ldots<\hat j_{n-\ell}\} \cup \{j_1<\ldots<j_\ell\} 
   = (1,\ldots,n) 
    }}  
     (-1)^{ \mbox{\tiny sign}(\vr) }\det(C^+_{_{\hat j_1}},\ldots,C^+_{_{\hat j_{n-\ell}}}  ,
      C^-_{_{j_1}} ,\ldots,C^- _{j_{ \ell}} )
\\
&\stackrel{*}{ =}\sum_{\ell=0}^\rho\sum_{_{  
    %
   \{\hat j_1<\ldots<\hat j_{n-\ell}\} \cup \{j_1<\ldots<j_\ell\} 
   = (1,\ldots,n) 
    }\atop {\{ \hat i_1<\ldots<\hat i_{n-\ell}\} \cup \{\dt+1\leq i_1<\ldots<i_\ell\} 
   = (1,\ldots,n)}}  (-1)^{ \sum_1^\ell (i_r+j_r) }\det(C^+_{\hat i_r,{\hat j_s}} )_{1\leq r,s\leq n-\ell } 
    \det( C^-_{i_r,j_s}  )_{1\leq r,s\leq  \ell }  
   \end{aligned}$$                                                    
$$\begin{aligned}\stackrel{**}{ =}\sum_{\ell=0}^\rho
  &\sum_{_{  
   \{\hat j_1<\ldots<\hat j_{n-\ell}\} \cup \{j_1<\ldots<j_\ell\} 
   = (1,\ldots,n) 
    }\atop {\{ \hat i_1=1,\ldots,\hat i_\dt=\dt<\hat i_{\dt+1}<\ldots   <\hat i_{n-\ell}\} \cup \{\dt+1\leq \dt+ i'_1<\ldots<\dt+i'_\ell\} 
   = (1,\ldots,n)}}  (-1)^{ \sum_1^\ell (i'_r+j_r)+\ell \dt }
   \\
   &\hspace*{4cm}\det(C^+_{\hat i_r,\hat j_s} )_{_{1\leq r,s\leq n-\ell }} 
    \det( C^-_{\dt+i'_r,j_s} )_{_{1\leq r,s\leq  \ell }},
   \end{aligned}$$
with the exponent $(i',j):=\sum_1^\ell (i_r'+j_r)+\ell \dt$. In $\stackrel{*}{ =}$ the $i_j$'s can be taken $\geq \dt+1$, because if  $i_1\leq \dt$, then the determinant of the $C^-$'s would be $=0$, due to the fact that the first $\dt$ entries of $C_j$ vanish and so the matrix would have a zero row. That also implies that $\ell$ can be taken $0\leq \ell \leq \rho$. In view of the inequalities in the set of $i_j$'s in formula $\stackrel{*}{ =}$,  it is convenient to set $i'_r:=i_r-\dt$ in formula $\stackrel{**}{ =}$. Also notice that because of the second set of  inequalities under the summation sign, we must have that $\hat i_r=r$ for $1\leq r\leq \dt$, since $0\leq\dt \leq n-\ell$ and $0\leq \ell\leq \rho$.
Finally one needs a so-called ``higher Fay identity" $\stackrel{*}{=}$, (to be shown in the remark below)
\be
 \det( C^-_{\dt+i'_r,j_s} )_{_{1\leq r,s\leq  \ell }}=\det \left[\frac{{\mathcal V}_{X_{i'_r   }\curvearrowright {Y}_{_ {j_s}  }  }\tau}{\tau}\right]_{_{1\leq r,s\leq  \ell }}\stackrel{*}{=}\frac{\prod_1^\ell {\mathcal V}_{X_{i'_r   }\curvearrowright {Y}_{_ {j_r}  }  }\tau}{\tau}.
\label{fay}\ee
This enables us to establish formula (\ref{vol4a}) and thus proving Lemma \ref{vollemma6.3}.
 \end{proof}

\noindent {\em Remark}: The ``higher Fay identity" is an identity familiar in $\tau$-function theory; see e.g. \cite{ASV}. In this very specific context of determinant of matrices it can be established by induction on $\ell$: assuming the identity true up to  $\ell-1$, we prove it for $\ell$, by expanding the first row of the determinant in (\ref{fay}), yielding: 
$$
\begin{aligned}
  \sum_{s=1}^\ell & (-1)^{s-1}{\mathcal V}_{X_{i'_1   }\curvearrowright {Y}_{_ s  } 
}\tau
 \prod_{r=2}^\ell  {\mathcal V}_{X_{i'_r   }\curvearrowright {X}_{_ {\tilde i_r} }}
 \tau =
  \tau  \prod_1^\ell {\mathcal V}_{X_{i'_r   }\curvearrowright {Y}_{_ r  }    
} \tau,
\\
&\mbox{  for  } (\tilde i_2,\ldots,\tilde i_{\ell})=(1, \ldots,\hat s,\ldots,\ell),
\end{aligned}
$$ 
where the latter identity is just a Pl\"ucker relation\footnote{For $X=(x_{ij})_{{1\leq i\leq n }\atop{1\leq j\leq k}}  \in {\rm Gr}(k,n)$ with $k\leq n$, define $p_{i_1i_2 \ldots i_k}:=\det  (X_{i_\ell j})_{1\leq \ell, j\leq k}
$. }
$$\sum_{a=1}^{k+1} p_{i_1i_2 \ldots i_{k-1}j_a}
p_{j_1j_2 \ldots \hat j_a\ldots  i_k}=0,
$$
establishing the ``Fay identity".

   \begin{lemma}\label{vollemma2}
   The following holds:
   \be\begin{aligned}
 &2^{ n(n-1)}\Gamma_{\bf xy}  
\\&\stackrel{*}{ =}
\sum_{\ell=0}^\rho
   \sum_{_{  
   \{\hat j_1<\ldots<\hat j_{n-\ell}\} \cup \{j_1<\ldots<j_\ell\} 
   = (1,\ldots,n) 
    }\atop {\{ \hat i_1=1,\ldots,\hat i_\dt=\dt<\hat i_{\dt+1}<\ldots   <\hat i_{n-\ell}\} \cup \{\dt+1\leq \dt+ i'_1<\ldots<\dt+i'_\ell\} 
   = (1,\ldots,n)}}  (-1)^{ \sum_1^\ell (i'_r+j_r)+\ell \dt }
 %
   \\&\hspace*{4cm}\times\det( P_{\hat i_r-1}( y_{\hat j_s}))_{1\leq r,s\leq n-\ell } 
      \prod _{r=1}^\ell 
 {\mathcal V}_{X_{i'_r   }\curvearrowright {\mathbb H}_{y_{j_r}  }  }\tau_n \Bigr|_{{x\to x-\beta}\atop{y\to y-\beta}}   \\
 &\stackrel{**}{ =}
 \sum_{\ell=0}^\rho
   \sum_{_{  
   \{\hat j_1<\ldots<\hat j_{n-1-\ell}\} \cup \{j_1<\ldots<j_\ell\} 
   = (1,\ldots,n-1) 
    }\atop {\{ \hat i_1=1,\ldots,\hat i_{\dt'}=\dt'<\hat i_{\dt'+1}<\ldots   <\hat i_{n-1-\ell}\} \cup \{\dt'+1\leq \dt'+ i'_1<\ldots<\dt'+i'_\ell\} 
   = (1,\ldots,n-1)}}  \!\!\! \!\!\! \!\!\! \!\!\! \!\!\! \!\!\! \!\!\! (-1)^{ \sum_1^\ell (i'_r+j_r)+\ell \dt' }
 %
   \\&\hspace*{1cm}\times\det( P_{\hat i_r-1}( y_{\hat j_s})-
   P_{\hat i_r-1}( y_{\hat j_s +1}))_{1\leq r,s\leq n-\ell } 
      \prod _{r=1}^\ell 
 {\mathcal V}_{X_{i'_r   }\curvearrowright {\mathbb H}_{y_{j_r}  }-{\mathbb H}_{y_{j_r +1}  }  }\tau_n\Bigr|_{{x\to x-\beta}\atop{y\to y-\beta}} \label{vol4b}\end{aligned}\ee
where $\dt'=\dt-1$. Written out, we have
\be\begin{aligned}
 \prod _{r=1}^\ell &
 {\mathcal V}_{X_{i'_r   } \curvearrowright {\mathbb H}_{y_{j_r}  }  } \tau_n
 \\&=
   \det\left(  X_1,\ldots, X_{i'_1-1},{\mathbb H}_{y_{j_1}},X_{i'_1+1},\ldots, X_{i'_\ell-1},{\mathbb H}_{y_{j_\ell}},X_{i'_\ell+1},\ldots,X_{n}
 \right)\Bigr|_{{x\to x-\beta}\atop{y\to y-\beta}}.\end{aligned}\label{vol4c}\ee

      \end{lemma}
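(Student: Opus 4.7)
The strategy is to apply Lemma \ref{vollemma6.3} (the Laplace expansion combined with the ``higher Fay'' step) to the first expression for $\Gamma^{(n)}_{\bf x y}$ in (\ref{vol1'}), using the Cramer-rule expression (\ref{lin1}) for the functions $f_{k-1}$, and then to pass to the reduced-size identity $\stackrel{**}{=}$ by a column-differencing trick that exploits $P_0\equiv 1$. For the first identity $\stackrel{*}{=}$, I will write the inner determinant in (\ref{vol1'}) as $\det(C^+_1+C^-_1,\ldots,C^+_n+C^-_n)$, where $C^+_j=(P_{i-1}(y_j-\beta))_{1\leq i\leq n}$ is the polynomial column and $C^-_j$ is the $f$-column with $\dt$ leading zeros followed by $f_0({\bf x}-\beta,y_j-\beta),\ldots,f_{\rho-1}({\bf x}-\beta,y_j-\beta)$. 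Using (\ref{lin1}) together with the sign convention $\mathbb{H}_y=-\mathbb{H}^{\rho+2\dt}(2(y-x_{\scriptscriptstyle\bullet}))$ from (\ref{vol3}) gives the key identification
\[
f_{i-1}({\bf x}-\beta,y_j-\beta)=\frac{1}{\tau}\,\mathcal{V}_{X_i\curvearrowright \mathbb{H}_{y_j}}\tau_n\Bigr|_{x\to x-\beta,\,y\to y-\beta},\qquad \tau=\tau_n\bigr|_{x\to x-\beta}=\det\widetilde{\cal H}_n^{(\beta-x)},
\]
which puts $C^-$ in exactly the form demanded by Lemma \ref{vollemma6.3} with $Y_j\leftrightarrow \mathbb{H}_{y_j}$ after the $\beta$-shift. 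Plugging into Lemma \ref{vollemma6.3}, rewriting $C^+_{\hat i_r,\hat j_s}=P_{\hat i_r-1}(y_{\hat j_s}-\beta)$, cancelling the $1/\tau$ from the lemma against the factor $\det\widetilde{\cal H}_n^{(\beta-x)}=\tau$ in (\ref{vol1'}), and tracking the sign coming from the prefactor $1/(-1)^{n-1}$ will produce $\stackrel{*}{=}$.

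For $\stackrel{**}{=}$, I will first perform, inside the big matrix, the elementary column operations $\mbox{col}_j\mapsto \mbox{col}_j-\mbox{col}_{j+1}$ for $j=1,\ldots,n-1$, leaving column $n$ unchanged; these leave the determinant unaltered. Because $P_0\equiv 1$ by (\ref{herm1}), the first row becomes $(0,\ldots,0,1)$; expanding along that row introduces a cofactor sign $(-1)^{n+1}$ and reduces the problem to an $(n-1)\times(n-1)$ determinant whose row-$i$, column-$j$ entry (for $2\leq i\leq n$, $1\leq j\leq n-1$) equals
\[
\bigl(P_{i-1}(y_j-\beta)-P_{i-1}(y_{j+1}-\beta)\bigr)+\bigl(f_{i-1-\dt}({\bf x}-\beta,y_j-\beta)-f_{i-1-\dt}({\bf x}-\beta,y_{j+1}-\beta)\bigr).
\]
Of these new rows, the first $\dt-1=\dt'$ carry no $f$-term (since $f_k\equiv 0$ for $k<0$), while the remaining $\rho$ do; by the column-linearity of $\mathcal{V}_{X_i\curvearrowright\cdot}\tau_n$ in its inserted entry, the $f$-differences still take the form $\tfrac{1}{\tau}\mathcal{V}_{X_i\curvearrowright(\mathbb{H}_{y_j}-\mathbb{H}_{y_{j+1}})}\tau_n$, so a second application of Lemma \ref{vollemma6.3} to this reduced matrix, now with parameters $(\dt',\rho,n-1)$ in place of $(\dt,\rho,n)$, yields $\stackrel{**}{=}$.

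The principal difficulty is sign bookkeeping. For $\stackrel{*}{=}$, the sign produced by the prefactor $1/(-1)^{n-1}$ in (\ref{vol1'}) must combine correctly with the sign flip coming from $\mathbb{H}_y=-\mathbb{H}^{\rho+2\dt}(\ldots)$ in the identification of $f_{k-1}$ and with the sign $(-1)^{(i',j)}$ supplied by Lemma \ref{vollemma6.3}. For $\stackrel{**}{=}$, the cofactor sign $(-1)^{n+1}$ from the top-row expansion must, together with the shift $\dt\to\dt'=\dt-1$ in the exponent $\ell\dt$, reproduce the advertised sign $(-1)^{\sum(i'_r+j_r)+\ell\dt'}$. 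Everything else—multilinearity of $\mathcal{V}_{X\curvearrowright Y}\tau_n$ in $Y$, the zero-block structure of $C^-$, and invariance of determinants under column differencing—is standard and presents no additional difficulty.
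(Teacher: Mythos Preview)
Your proposal is correct and follows essentially the same route as the paper: write the inner determinant in (\ref{vol1'}) as $\det(C^+_j+C^-_j)$ with $C^+_j=(P_{i-1}(y_j-\beta))_i$ and $C^-_j$ built from the $f_{k-1}$'s, identify $f_{k-1}=\tau^{-1}\mathcal V_{X_k\curvearrowright\mathbb H_y}\tau_n$ via (\ref{lin1}) and the sign convention in (\ref{vol3}), apply Lemma \ref{vollemma6.3} to obtain $\stackrel{*}{=}$, then perform the column-differencing $\mathrm{col}_j\mapsto\mathrm{col}_j-\mathrm{col}_{j+1}$, use $P_0\equiv1$ to strip off the top row, and re-apply Lemma \ref{vollemma6.3} with $(\dt,\rho,n)\to(\dt',\rho,n-1)$ for $\stackrel{**}{=}$. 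The paper proceeds identically (see (\ref{Gamma'})--(\ref{vol6'})), and your flagging of the sign bookkeeping as the only delicate point is apt; note that the paper's own statement and proof are somewhat casual about the global $(-1)^{n-1}$ prefactor here, so do not be alarmed if that sign does not balance cleanly on paper.
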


\begin{proof}
 Remember expression (\ref{Gamma1}) for $\Gamma_{\bf xy}$,
\be  \Gamma_{\bf xy}  
 =\frac{\det\widetilde {\cal H}^{( \beta -x)}_{n}}{2^{n(n-1)}} \det\left[   P_{ i-1 }(y_j-\beta )  ~+~  f_{i-\dt-1}({\bf x}-\beta ,y_j -\beta) \right]_{1\leq i,j\leq n}.
\label{Gamma'}\ee
Notice that for $\dt\geq 1$, the first row of the second matrix in the expression consists of all $1$'s, because $P_0=1$ and $f_i=0$ for $i<0$. It is therefore natural to subtract each column from the previous one and so obtain the determinant of a matrix of size $n-1$, and note in effect $\dt\to \dt'=\dt-1$ (as in $C_j^-$), with $\tau_n$ remaining unchanged; this gives:
   \be\begin{aligned}
& \Gamma_{\bf xy}   
 =(-1)^{n-1}\frac{\det\widetilde {\cal H}^{(   -x)}_{n} }{2^{n(n-1)}}
 \\
 &\times\!\det\left[   P_{ i-1 }(y_{j }  )-P_{ i-1 }(y_{j+1}  )  ~+~  f_{i\!-\!\dt\!-\!1}({\bf x}  ,y_j )-f_{i\!-\!\dt\!-\!1}({\bf x} ,y_{j+1} ) \right]_{1\leq i,j\leq n-1}
\Bigr|_{{x\to x-\beta}\atop{y\to y-\beta}}
  \label{Gamma''}\end{aligned}\ee 
  The expressions (\ref{Gamma'}) and (\ref{Gamma''}) for $\Gamma_{xy}$ have exactly the form (\ref{vol4a}) in Lemma \ref{vollemma6.3}. which therefore can be applied straightforwardly in both cases, with $\tau_n=\det \widetilde {\cal H}^{(   -x)}_{n} =\det \widetilde {\cal H}^{( \beta  -x)}_{n} $.
Remember,  for $1\leq k \leq \rho $, one has
 $$
 \begin{aligned}
  & f_{k-1}({\bf x },{y  })=  - 2^{n+\dt-1}
  \det\left(\widetilde {\cal H}^{( -x)}_{n}\right)^{-1}     \hspace*{1.6cm} \begin{array}{c}k
 \\
 \downarrow \end{array} \\
  & \times\det\left( \widetilde H_{ n -1}(  -x_i),~
   \ldots,~\widetilde H_{ n  -k+1}(  -x_i),  
   {\mathbb H}^{  n+\dt }( y-x_i) ,
   \widetilde H_{ n  -k-1}(  -x_i),~\ldots, \widetilde H_{ 0}(  -x_i)\right)_{1\leq i\leq n},
 \\
 &= \frac {1 }{\tau_n ({\bf x})}
   \det\left( X_1,~
   \ldots,~X_{k-1},  
   {\mathbb H}_y,X_{k+1},~\ldots,   X_n\right)_{1\leq i\leq n},
 \\
&= \frac { 1 }{\tau_n ({\bf x})}{\mathcal V}_{_{X_k  \curvearrowright {\mathbb H}_{y }}}
\det\left(X_1,\ldots,X_n\right)
\\&=      \frac{{\mathcal V}_{_{X_k  \curvearrowright {\mathbb H}_{y }}}
\tau_n }{\tau_n  }\mbox{  }\end{aligned}$$
It then follows that 
$$\begin{aligned}
    f_{k-1}({\bf x },{y  })-f_{k-1}({\bf x },{y'  })&=     \frac{{\mathcal V}_{_{X_k  \curvearrowright {\mathbb H}_{y }-{\mathbb H}_{y' }}}
\tau_n }{\tau_n  }.\mbox{  }\end{aligned}$$
  According to formula (\ref{vol4a}), one needs to compute the following determinant. Given $\{  i'_1<\ldots<i'_\ell\} 
   \in \{ 1,\ldots,\rho \}$ and $    \{j_1<\ldots<j_\ell\} 
   \in \{1,\ldots,n\} 
$, one checks from the higher Fay identity (\ref{fay}),
 \be\begin{aligned}
 \det\left(f_{i'_r -1}({\bf x},y_{j_s})\right)_{1\leq r,s\leq \ell}
&=    \det  
 \left( 
-\frac   
 {
 {\mathcal V}_{X_{i'_r   }\curvearrowright {\mathbb H}_{y_{j_s}  }  }\tau_n
 }
 {\tau_n}
\right)_{1\leq r,s\leq  \ell } 
= \frac{1}{\tau_n}\prod _{r=1}^\ell 
 {\mathcal V}_{X_{i'_r   }\curvearrowright {\mathbb H}_{y_{j_r}  }  }\tau_n
 \end{aligned}\label{vol6}\ee
 and
 \be\begin{aligned}
 \det&\left(f_{i'_r -1}({\bf x},y_{j_s})-f_{i'_r -1}({\bf x},y_{j_s+1})\right)_{1\leq r,s\leq \ell}
\\&= \frac{ 1}{\tau_n}\prod _{r=1}^\ell 
 {\mathcal V}_{X_{i'_r   }\curvearrowright {\mathbb H}_{y_{j_r}} -{\mathbb H}_{y_{j_r +1}}    }    
 \tau_n.
 \end{aligned}\label{vol6'}\ee
 This establishes Lemma \ref{vollemma2}.
 \end{proof}

%


\noindent {\em Proof of Proposition \ref{volprop1} for $n>\rho$}: Already knowing that $\begin{aligned}
\mbox{Vol}( {\mathcal C}^{(n)}_{\bf xy})&=   \Gamma^{(n)}_{\bf xy}  
\end{aligned}$ for $n=\rho$, we proceed by induction. Assuming $\mbox{Vol}( {\mathcal C}^{(n)}_{\bf xy}) =   \Gamma^{(n)}_{\bf xy}  
$  for arbitrary $n\geq \rho$, it suffices to show that $ 
\mbox{Vol}( {\mathcal C}^{(n+1)}_{\bf xy}) =   \Gamma^{(n+1)}_{\bf xy}  
 $, or what is equivalent, for any set $w_1\leq \ldots\leq w_{n+1}$ and $z_1\leq \ldots\leq z_{n+1}$ in $\BR$, it suffices to show
 \be\begin{aligned}
\prod_{i=1}^{n}\int_{z_{i }}^{z_{i+1}}dx_i \int_{w_{i }}^{w_{i+1}}dy_i ~~ \Gamma^{(n)}_{\bf xy} = \Gamma^{(n+1)}_{\bf zw}
,\end{aligned}
\label{integral}\ee 
since this identity obviously holds true upon replacing $\Gamma^{(n)}_{\bf xy} $ and $  \Gamma^{(n+1)}_{\bf zw}$ by the volumes 
$\mbox{Vol}( {\mathcal C}^{(n)}_{\bf xy})$ and $\mbox{Vol}( {\mathcal C}^{(n+1)}_{\bf zw})$.

Before proceeding we need some preliminary integrals. Since ${\widetilde H}'_\ell(y)=2 {\widetilde H}_{\ell-1}(y)$ and $P_n(x) =  \frac 1{\I^n}\widetilde H_n(\I x)$, we have that 
$$\frac{d}{dy} {\widetilde H}_\ell(-y)=
-2 {\widetilde H}_{\ell-1}(-y)   \mbox{   and   } \frac{d}{dy}P_\ell (y)=2P_{\ell-1}(y)
$$
and so, for $z_1<z_2$,
$$
\int_{z_1}^{z_2} dy ~P_{\ell-1}(y)=\frac 12 (P_\ell(z_2)-P_\ell(z_1))\mbox{   and   }
\int_{z_1}^{z_2} dy ~{\widetilde H}_{\ell-1}(-y)= \tfrac   { {\widetilde H}_{\ell }(-z_1)
 -{\widetilde H}_{\ell }(-z_2) }{  2}.
$$
Thus,
given $z_1<z_2$, $w_1<w_2$ and $x\in \BR$, we have the identity
$$\begin{aligned}
 & \int_{w_1}^{w_2} dy~  {\mathbb H}^{(\ell+1)}(y-x)
=  {\mathbb H}^{(\ell+2)}(w_2-x)- {\mathbb H}^{(\ell+2)}(w_1-x) 
,\end{aligned}
$$
and integrating once more, one has
\be\begin{aligned}
 & \int_{z_1}^{z_2} dx~\int_{w_1}^{w_2} dy  ~  {\mathbb H}^{(\ell+1)}(y-x)
\\
&=
 -{\mathbb H}^{(\ell+3)}(w_2-z_2)+ {\mathbb H}^{(\ell+3)}(w_2-z_1)+{\mathbb H}^{(\ell+3)}(w_1-z_2)- {\mathbb H}^{(\ell+3)}(w_1-z_1)
\end{aligned}
\label{intH}\ee
 So, we shall prove the identity (\ref{integral}) with $\Gamma_{\bf xy}$ as obtained in Lemma \ref{vollemma2}; namely, the first formula of (\ref{vol4b}). 
 %

Notice that the columns of $\det( P_{\hat i_r}( y_{\hat j_s}))_{1\leq r,s\leq n-\ell }$ and $ \prod _{r=1}^\ell 
 {\mathcal V}_{X_{i'_r   }\curvearrowright {\mathbb H}_{y_{j_r}  }  }\tau_n$ in formula (\ref{vol4b}), taken together, are labeled by distinct $y_{\hat j_s}$'s and $y_{j_r} $, whereas the rows  of $ \prod _{r=1}^\ell 
 {\mathcal V}_{X_{i'_r   }\curvearrowright {\mathbb H}_{y_{j_r}  }  }\tau_n$  are labeled by the $x_i$'s; of course, the first determinant does not contain $x_i$'s. Performing the integration (\ref{integral}), 
%
%
 %
this enables us to distribute the $y$-integrations over the different columns and the $x$-integrations over the different rows. One then uses the integration formulas (\ref{intH}) yielding the expression below, where in the first determinant $i'_k$ ranges over $i'_1<\ldots<i'_\ell$, and  remembering the expression (\ref{vol4c}) for  
$ \prod _{r=1}^\ell 
 {\mathcal V}_{X_{i'_r   }\curvearrowright {\mathbb H}_{y_{j_r}  }  }\tau_n$ in terms of the $X_i$ as defined in (\ref{vol3}), (remember it suffices to show (\ref{integral}))
$$
 \begin{aligned}
   \prod_{i=1}^{n}&\int_{z_{i }}^{z_{i+1}}dx_i \int_{w_{i }}^{w_{i+1}}dy_i ~~ \Gamma^{(n)}_{\bf xy}  \Bigr|_{\beta=0}
 \\
=&  \frac{1}{2^{n(n-1)}} \sum_{\ell=0}^\rho\sum_{_{  
    %
   \{\hat j_1<\ldots<\hat j_{n-\ell}\} \cup \{j_1<\ldots<j_\ell\} 
   = (1,\ldots,n) 
    }\atop {\{ \hat i_1<\ldots<\hat i_{n-\ell}\} \cup \{\dt+1\leq \dt+ i'_1<\ldots<\dt+i'_\ell\} 
   = (1,\ldots,n)}}  \!\!\! (-1)^{\sum_1^\ell (i_k'+j_k)+\ell \dt}   
   \\
   &~~\det\left[  \tfrac 1{ 2} ( \widetilde H_{n}(-z_{i })- \widetilde H_{n}(-z_{i+1 })) 
,  \tfrac 1{ 2} ( \widetilde H_{n-1}(-z_{i })-
 \widetilde H_{n-1}(-z_{i+1 })),
\ldots,\right.\\
&  ~~~~~~~~ \tfrac 1{ 2} ( \widetilde H_{n+1-(i'_{k}-1)}(-z_{i })- \widetilde H_{n+1-(i'_{k}-1)}(-z_{i+1 })), \\
&~~~~~~~~~~ \tfrac 14 \left(
  ({\mathbb H}^{\rho+2\dt+2}_{w_{j_k },z_{i+1}}
- {\mathbb H}^{\rho+2\dt+2}_{w_{j_k +1 },z_{i+1}}  )  
- ({\mathbb H}^{\rho+2\dt+2}_{w_{j_k },z_{i }}
- {\mathbb H}^{\rho+2\dt+2}_{w_{j_k +1 },z_{i }})
\right),\\
& \left.~~~~~  \tfrac 12 ( \widetilde H_{n+1-(i'_{k}+1)}(-z_{i })- \widetilde H_{n+1-(i'_{k}+1)}(-z_{i +1})) ,\right.\\
&~~~~~~~\left.\dots, \tfrac 12( \widetilde H_{2}(-z_{i })- \widetilde H_{2}(-z_{i +1})),\widetilde H_{1}(-z_{i+1})- \widetilde H_{1}(-z_{i })\right]_{1\leq i\leq n}
\\
&\times   \det\left[
 \tfrac 12 \left(P_{\hat i_r }(w_{\hat j_s +1})   -
P_{\hat i_r }(w_{\hat j_s })\right)
\right]_{1\leq r,s\leq n-\ell }~\Bigr|_{\beta=0}
\end{aligned}
$$
$$
\begin{aligned}
& \stackrel{*}{=} {2^{-n(n+1)}}\sum_{\ell=0}^\rho\sum_{_{  
    %
   \{\hat j_1<\ldots<\hat j_{n-\ell}\} \cup \{j_1<\ldots<j_\ell\} 
   = (1,\ldots,n) 
    }\atop {\{ \hat i_1<\ldots<\hat i_{n-\ell}\} \cup \{\dt+1\leq \dt+ i'_1<\ldots<\dt+i'_\ell\} 
   = (1,\ldots,n)}}  \!\!\! (-1)^{\sum_1^\ell (i_k'+j_k)+\ell \dt}
   \\
   &\det\left[  \widetilde H_{n}(-z_i) , ~\widetilde H_{n-1}(-z_i) ,\ldots   
 ,  \widetilde H_{n+1-(i'_{k}-1)}(-z_i)  ,  \right.\\
&  \left.   
 - {\mathbb H}^{\rho+2\dt+2}_{w_{j_k +1},z_i}
+{\mathbb H}^{\rho+2\dt+2}_{w_{j_k  },z_i} ,   
    \widetilde H_{n+1-(i'_{k}+1)}(-z_i) ,\dots,\widetilde H_1(-z_i),1\right]_{1\leq i\leq n+1}
\\
&\times   \det\left[
   P_{\hat i_r }(w_{\hat j_s})   -
P_{\hat i_r }(w_{\hat j_s+1}) 
\right]_{1\leq r,s\leq n-\ell }
\Bigr|_{\beta=0}
\\
&= \Gamma^{(n+1)}_{\bf z w}\Bigr|_{\beta=0}
\end{aligned}
$$
using in equality $\stackrel{*}{=}$ the following row operations on determinants of matrices:
$$
\det(a_i-a_{i+1}, b_i-b_{i+1},\ldots,c_i-c_{i+1})_{1\leq i\leq n}
=
\det(a_i,b_i,\ldots,c_i,1)_{1\leq i\leq n+1};
$$
this enables one to write the $n\times n$ determinant before equality $\stackrel{*}{=}$ as a $(n+1)\times (n+1)$ determinant. To see that the last equality holds, one uses the second formula of (\ref{vol4b}), with $\dt$ replaced by $\dt+1$ and thus $n$ by $n+1$. This shows the identity (\ref{integral}) and thus completes the induction. Since the expression for $\Gamma^{(n+1)}_{\bf z w}$ happens to be a $\beta$-independent volume, it is $\beta$-independent as well. This ends the proof of Proposition \ref{volprop1}.\qed

{\em Proof of Corollary \ref{volcor1}}: it appears in the remark at the end of previous section; see formula (\ref{86''}).\qed

\section{Equivalence of the two systems}

Probability (\ref{uniform}) in Theorem \ref{a=1} states that: 
 \be\begin{aligned}
 \BP^{\mbox{\tiny\rm TAC}}& \left(
  \bigcap_{1}^{n-1}
  \{{{\bf x}^{(k)}\!\! \in d{\bf x}^{(k)} ,~{\bf y}^{(k)}\!\!\in d{\bf y}^{(k)}  }  \}\Bigr|
 \begin{array}{c} \mbox{$ {\bf x}^{(n)}={\bf x}$ and $ {\bf y}^{(n)}={\bf y}$}
   %
\end{array}\!\!\! \right)
     = \frac{d\mu_ {{\bf x} {\bf y}}}{ \mbox{Vol}~({\mathcal C} _{{\bf x} {\bf y}})} 
  , \end{aligned}
\label{uniform8} \ee
and Theorem \ref{Th:JProb} states that
 \be\begin{aligned}
 \BP^{\mbox{\tiny\rm TAC}}& \left(
  \{{{\bf x}^{(n)}\in d{\bf x} ,~{\bf y}^{(n)}\in d{\bf y}  }  \}
 \right)
 =\frac{\rho_n^{ \mbox{\tiny GUE}}({\bf x}\!-\!\beta) 
 ~ \rho_n^{\mbox{\tiny GUE}}( {\bf y}\!+\!\beta)}
 {\det(\Id- {\mathcal K}^{\beta}
 )_{[-1, -\rho]}}
 \frac {  \mbox{\em Vol} ({\mathcal C}^{(n)}_{{\bf x}{\bf y} }) d{\bf x}d{\bf y} }{\mbox{\em Vol} ({\mathcal C}_{{\bf x} }) \mbox{\em Vol} ({\mathcal C}_{ {\bf y} })    }
, \end{aligned}
 \label{joint8}\ee
implying
 \be\begin{aligned}
 \BP^{\mbox{\tiny\rm TAC}} & \Bigl(
  \bigcap_{1}^{n }  
  \{{{\bf x}^{(k)}\!\! \in d{\bf x}^{(k)} ,~{\bf y}^{(k)}\!\!\in d{\bf y}^{(k)}  }  \}\! \Bigr)
 \\& 
 =\frac{\rho_n^{ \mbox{\tiny GUE}}({\bf x}\!-\!\beta) 
 ~ \rho_n^{\mbox{\tiny GUE}}( {\bf y}\!+\!\beta)}
 {\det(\Id- {\mathcal K}^{\beta}
 )_{[-1, -\rho]}}
 \frac {  \mbox{\em   }   d{\bf x}d{\bf y} d\mu_ {{\bf x} {\bf y}}}{\mbox{\em Vol} ({\mathcal C}_{{\bf x} }) \mbox{\em Vol} ({\mathcal C}_{ {\bf y} })    } . \end{aligned}
\label{uniform8} \ee
From (\ref{15''}) the probability of the same event for the coupled random matrix model equals
\be
\begin{aligned}
\BP^{\mbox{ }}&\Bigl(\bigcap_{k=1}^n\{ {\bf x}^{(k)}\in d{\bf x}^{(k)},~
 {\bf y}^{(k)}\in d{\bf y}^{(k)}\} 
    \Bigr)
\\
&
  =\frac{\rho_n^{ \mbox{\tiny GUE}}({\bf x}\!-\!\beta) 
   \rho_n^{\mbox{\tiny GUE}}( {\bf y}\!+\!\beta)}
{\BP^{\mbox{\tiny GUE}} \left(\bigcap _1^{\rho}\{y_i^{(i)}\leq  x_{1}^{(\rho-i+1)} 
 \}\right)}
~  \frac { d{\bf x}d{\bf y}  d\mu_ {{\bf x} ,{\bf y}} }{\mbox{ Vol} ({\mathcal C}_{{\bf x} }) \mbox{ Vol} ({\mathcal C}_{ {\bf y} }) }
   .\end{aligned}
  \label{15''8}\ee 
Since both expressions only differ by the $({\bf x},{\bf y})$-independent expressions in the denominator, we have that upon integration in ${\bf x}$ and ${\bf y}$, and noticing that probabilities integrate to $1$, one finds
$$
  \BP^{\mbox{\tiny GUE}} \left(\bigcap _1^{\rho}\{y_i^{(i)}\leq  x_{1}^{(\rho-i+1)} 
 \}\right)=
 \det(\Id- {\mathcal K}^{\beta}
 )_{[-1, -\rho]}
~,$$
 including the explicit formula for this Fredholm determinant, appearing in (\ref{ineq}). This establishes Theorem \ref{Th1.2} for both models, {\bf I} and {\bf II}, including equality (\ref{ineq}). This shows that the point processes for both models are equivalent. This immediately shows that 
 Theorem \ref{Th1.4} holds for the edge-tacnode process and for the coupled GUE-matrix problem as well. 
\qed

  \end{document}